
\documentclass[11pt]{article}
\usepackage{custarticle}


\newcommand{\CC}{\mathbb C}

\newcommand{\NN}{\mathbb N}
\newcommand{\RR}{\mathbb R}
\newcommand{\ZZ}{\mathbb Z}
\newcommand{\Si}{\Sigma}
\newcommand{\cl}{\mathtt{cl}}
\newcommand{\heta}{\widehat{\eta}}
\newcommand{\phg}{{\mathrm{phg}}}
\newcommand{\calA}{{\mathcal A}}
\newcommand{\calB}{{\mathcal B}}
\newcommand{\calC}{{\mathcal C}}
\newcommand{\calD}{{\mathcal D}}
\newcommand{\calE}{{\mathcal E}}
\newcommand{\calF}{{\mathcal F}}

\newcommand{\calH}{{\mathcal H}}
\newcommand{\calI}{{\mathcal I}}

\newcommand{\calK}{{\mathcal K}}
\newcommand{\calL}{{\mathcal L}}

\newcommand{\calN}{{\mathcal N}}
\newcommand{\calO}{{\mathcal O}}
\newcommand{\calP}{{\mathcal P}}

\newcommand{\calT}{{\mathcal T}}
\newcommand{\calU}{{\mathcal U}}
\newcommand{\calV}{{\mathcal V}}

\newcommand{\iind}{\mathrm{Ind}\,}
\newcommand{\Kr}{\mathrm{Kr}}

\newcommand{\ie}{\mathrm{ie}}
\newcommand{\fff}{\mathrm{ff}}

\chead{An index theorem for $\ZZ_2$-harmonic spinors branching along a graph}

\title{The index problem for $\ZZ_2$-harmonic spinors in a three-manifold branching along graphs}

\author{Andriy Haydys \\ Universit{\'e} libre de Bruxelles \\ andriy.haydys@ulb.be
\and
Rafe Mazzeo \\ Stanford University \\ rmazzeo@stanford.edu 
\and 
Ryosuke Takahashi \\ National Cheng Kung University, TAIWAN \\ tryotriple@gmail.com}

\date{}

\begin{document}
	\maketitle



\begin{abstract} 
We prove an index formula for the Dirac operator acting on two-valued spinors on a $3$-manifold $M$ which branch
along a smoothly embedded graph $\Si \subset M$, and with respect to a boundary condition along $\Si$ inspired
by an instance of this setting related to the deformation theory of $\Z_2$-harmonic spinors.  When $\Si$ is
a smooth embedded curve, this index vanishes; this was proved earlier  by one of us \cite{Takahashi18_IndexThm},
but the proof here is different and extends to the more general setting where $\Si$ also has vertices. 
We focus primarily on the Dirac operator itself, but also show how our results apply to more general twisted 
Dirac operators and to the closely related $\Z_2$ harmonic $1$-forms.
\end{abstract}

\maketitle

\section{Introduction}
Let $(M^3,g)$ be a closed, oriented, Riemannian three-manifold. Fix a spin structure on $M$; the corresponding spinor bundle $\slS$ 
is a rank $2$ Hermitian vector bundle.  Next, fix a smoothly embedded `regular' compact graph $\Si\subset M$. 
Thus $\Sigma$ is a finite collection of smoothly embedded arcs which meet in vertices; we assume that no two arcs meet tangentially at 
any vertex, and that at each vertex $v$, there is an even number $2k_v \geq 4$ of edges terminating there. We allow loops, so
both endpoints of the same edge may contribute to this valence. Now let $\cI$ be a rank $1$ real vector bundle on $M \setminus \Sigma$
endowed with a nondegenerate (i.e., nonvanishing) inner product which has monodromy $-1$ around each loop encircling any 
edge of $\Sigma$ once.  A section $\psi\in\Gamma\big ( M\setminus\Si; \slS\otimes\cI \big)$ is called a $\Z_2$ (or $\Z_2$-valued) 
spinor. Any such $\psi$ can be interpreted as a section of $\slS$ over $M\setminus \Sigma$ which is only well defined 
up to the multiplication by $\pm 1$. 

The metric $g$ induces a unique connection on both $\slS$ and $\slS \otimes \cI$, and hence determines a Dirac operator
\begin{equation}
	\label{Eq_Z2HarmSpinor}
	\Dirac := \sum_{j=1}^3 \mathtt{cl}(\bf{e}_j) \nabla^{\slS \otimes \cI} _{\bf{e}_j}\, ,
\end{equation} 
where $\mathtt{cl}(\cdot) $ indicates Clifford multiplication and $\{\bf{e}_j\}$ is a local orthonormal frame of $TM$. We say that a triple $(\Sigma, \cI, \psi)$ is a 
$\Z_2$ harmonic spinor if $\psi$ is a section of $\slS \otimes \cI$ on $M \setminus \Si$, $\Dirac \psi = 0$, 
and $|\psi| \to 0$ on $\Si$. For brevity, we usually refer to just $\psi$ alone as the $\Z_2$ harmonic spinor, with
$\Si$ and $\cI$ understood from the context. 

The hypothesis that $|\psi| \to 0$ at $\Si$ is important. Indeed, as explained below, there is an infinite dimensional 
space of $\Z_2$-twisted solutions to $\Dirac \psi = 0$ which blow up mildly along $\Si$, and in fact, there is a 
well-posed Dirichlet problem where such singular solutions are parametrized by functions on $\Si$.

$\Z_2$-harmonic spinors first appeared in Taubes' analysis of diverging sequences of flat, stable $\PSL(2;\C)$-connections, 
\cite{Taubes13_PSL2Ccmpt}, and more recently as limits of Seiberg--Witten monopoles with multiple spinors~\cite{HaydysWalpuski15_CompThm_GAFA}.
There is an obvious generalization of this notion to four-manifolds, where $\Si$ is a smooth codimension two (stratified) surface. 
These appear as limits of diverging sequences of solutions of the complex anti-self-duality equations~\cite{Taubes13_CxASD_Arx},  Seiberg--Witten equations with multiple spinors~\cite{Taubes16_SWDim4_Arx}, Kapustin--Witten
equations~\cite{Taubes18_SequencesKapustWitten_Arx}, and Vafa--Witten equation~\cite{Taubes17_VafaWitten_Arx}. 
More generally still, $\Z_2$ harmonic spinors and their non-linear analogues, called Fueter sections, also appear as limits of 
sequences of higher dimensional instantons~\cite{Walpuski17_G2InstFueter}.  There is an intriguing conjecture relating higher dimensional 
instantons and Seiberg--Witten monopoles by way of $\Z_2$ harmonic spinors and Fueter sections ~\cite{HaydysWalpuski15_CompThm_GAFA,Haydys19_G2InstantSWmonopoles}. 
All of this provides ample motivation for a deeper study of $\Z_2$ harmonic spinors.

There are numerous important questions and directions to pursue. One is to construct a robust set of examples of $\Z_2$ harmonic 
spinors. This turns out to be quite difficult, as might be expected from the overdetermined nature of the problem.  
A nonconstructive proof of existence appears in  work of Doan-Walpuski \cite{DoanWalpuski-Z2}. More recently, Taubes and 
Wu have constructed explicit homogeneous solutions in $\RR^3$ \cite{TaubesWu20_ExOfSingModels_Arx}, see also \cite{TaubesWu21}. 
The note~\cite{HMT23_NewExamples_Arx} collects a number of new examples in $3$ and $4$ dimensions, including some recently constructed
ones by He and by Sun. 

It is also important to understand the regularity of the branching set $\Si$.  Zhang proved \cite{Zhang17_Rectifiability_Arx} that 
under certain hypotheses in this three-dimensional setting, $\Si$ is rectifiable of Hausdorff dimension one. However, it is not 
unreasonable to expect that generically amongst smooth metrics, $\Si$ should be a smoothly embedded graph. (If the metric
has only finite regularity, then presumably $\Si$ also has only finite regularity.)

Another central question, and a key one in possible applications to instanton counting, is to understand whether these objects are rigid or 
exist in families. This can be phrased as follows: given 
a $\Z_2$ harmonic spinor $(\Si, \cI, \psi)$ for a given metric $g$ on $M$, do there exist $\Z_2$-harmonic spinors
$(\Si', \cI', \psi')$ associated to metrics $g'$ near $g$?  This has been addressed first by the third author \cite{Takahashi15_Z2HarmSpinors_Arx},
and more recently by Parker \cite{Parker-DefHS3M} in the case where $\Si$ is a smooth embedded curve. They prove that
there is an underlying Fredholm problem governing this deformation theory.

There are various challenges in extending this to the setting where $\Si$ is only a graph.  The present paper undertakes one 
part of this. Namely, we set up analytic machinery suitable for studying the Dirac operator on $\Z_2$-twisted spinors when
the branching set has vertices and edges, and prove various facts about this operator and a certain elliptic boundary problem 
that arises naturally for this deformation theory. The boundary condition is determined by a nowhere vanishing pair of complex-valued 
smooth functions along each edge. The motivating example is when this pair of functions arises as the leading coefficients, in a sense
to be made precise below, of a $\Z_2$-harmonic spinor. The nonvanishing hypothesis is obviously generic in general, but also
in this special case, cf.\ \cite{He-examples}.   Our main result is an index formula for $\Dirac$ acting on sections of $\slS\otimes\calI$ 
satisfying this local algebraic boundary condition. This reproves and extends the index formula in \cite{Takahashi18_IndexThm}.
When the boundary operator uses data coming from the leading coefficients of a $\Z_2$-harmonic spinor, this 
index formula is central to the analysis of the deformation 
theory \cite{Parker-DefHS3M}.   The linearized deformation operator has some additional subteties, so we do not address it
here, but shall return soon to its analysis using the results and methods here. 

This paper has two parts. In the first, we review how, when $\Si$ is a smooth closed (and possibly disconnected) curve, the analysis  
of $(\Dirac, \calT_\psi)$ lies within the framework of elliptic boundary problems for differential edge operators, as developed 
in \cite{Mazzeo91_EllTheoryOfDiffEdgeOp}, \cite{MaVe}, \cite{Usula}, \cite{Usula2}.  This leads to a new direct proof of the main result in \cite{Takahashi18_IndexThm}:
\begin{theorem} Let $\Si$ be a smooth closed curve, $\cI$ the associated twisting real line bundle and $\psi := \{(c_1^i, d_1^i)\}$ a pair 
of smooth, complex-valued functions on $\Si$ on each edge which never vanish simultaneously. These determine a boundary operator 
$\calT_\psi$ for $\Dirac$ on $L^2(M \setminus \Si; \slS \otimes \cI)$, see Section 3.8. Then $(\Dirac, \calT_\psi)$ is 
Fredholm with $\mathrm{Ind}\, (\Dirac, \calT_\psi)=0$. 
\label{theorem1}
\end{theorem}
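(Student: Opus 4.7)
The plan is to place the problem in the framework of elliptic differential edge operators developed in \cite{Mazzeo91_EllTheoryOfDiffEdgeOp, MaVe, Usula, Usula2}, with $\Si$ as the edge. In Fermi coordinates $(r, \theta, z)$ in a tubular neighborhood of each component of $\Si$, the $\Z_2$ monodromy of $\calI$ identifies sections of $\slS \otimes \calI$ with half-integer Fourier modes in $\theta$, and a short computation shows that $r\Dirac$ is a first-order elliptic edge operator of the form $A(r\partial_r, \partial_\theta) + r\,\cl(\partial_z)\nabla_{\partial_z} +$ lower order. Separation of variables in the indicial operator $I(\Dirac)(\lambda)$ yields indicial roots at $\pm \tfrac{1}{2}, \pm \tfrac{3}{2}, \ldots$, and the two half-modes at $+\tfrac{1}{2}$ give a four-real-dimensional space of leading asymptotic data at each $z \in \Si$, parametrised invariantly by complex pairs $(c_1(z), d_1(z))$. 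This provides the intrinsic description of the $r^{1/2}$-leading coefficients of admissible solutions.

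Next, I would verify that $\calT_\psi$ defines an elliptic edge boundary condition. Pointwise along $\Si$ this amounts to showing that $\ker \calT_\psi$ cuts out a Lagrangian subspace of the indicial kernel with respect to the antisymmetric Green pairing induced by integration by parts; the hypothesis that $(c_1^i, d_1^i)$ is nowhere simultaneously zero is precisely what makes this condition pointwise nondegenerate. Combined with interior ellipticity of $\Dirac$, invertibility of the indicial family on the critical strip $|\mathrm{Re}\,\lambda| < \tfrac{1}{2}$, and compactness of $M$, the general theory then produces a parametrix modulo compact errors, yielding Fredholmness of $(\Dirac, \calT_\psi)$ on the appropriate weighted edge Sobolev domain.

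To compute the index, I would use homotopy invariance together with a symmetry argument. The space of admissible data $\{(c_1^i, d_1^i)\}$ is homotopy equivalent to $\mathrm{Map}(\Si, S^3)$, which is connected since $\Si$ is a disjoint union of circles and $\pi_1(S^3) = 0$; along any such homotopy the Fredholm index is constant, so it suffices to compute $\iind(\Dirac, \calT_{\psi_0})$ for a single model $\psi_0$, for instance $(c_1, d_1) = (1, i)$ on each edge. For such a symmetric $\psi_0$ the three-dimensional charge-conjugation (quaternionic) antilinear involution $J$ on $\slS \otimes \calI$ commutes with $\Dirac$ and intertwines $\calT_{\psi_0}$ with its formal adjoint boundary condition, producing an antilinear isomorphism between kernel and cokernel and forcing $\iind = 0$.

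The main obstacle will be the second step: verifying the edge analogue of the Shapiro--Lopatinski condition for $\calT_\psi$, i.e., that its kernel genuinely cuts out a Lagrangian in the indicial kernel for the Green pairing, and identifying the correct weighted edge Sobolev domain on which Fredholmness holds. Once this is in place, both Fredholmness and the vanishing of the index follow relatively cleanly from edge calculus and the intrinsic charge-conjugation symmetry of the Dirac operator in three dimensions.
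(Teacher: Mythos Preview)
Your Fredholm argument is essentially the paper's: place $\Dirac$ in the edge calculus, compute the half-integer indicial roots, and verify the edge Calderon/Lopatinski--Schapiro condition for $\calT_\psi$ under the nondegeneracy hypothesis. The paper carries this out via an explicit symbol computation showing that $\sigma(\calT_\psi)\circ\sigma(\calN)$ is invertible precisely when $|c_1|^2+|d_1|^2\neq 0$, where $\calN$ is the edge Dirichlet-to-Neumann operator on the Calderon subspace. One caveat: the Lagrangian property of $\ker\calT_\psi$ with respect to the Green pairing is the \emph{self-adjointness} condition, not the ellipticity condition; they happen to hold simultaneously here but are logically distinct, and you should keep them separate.

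Your index computation is where you diverge, and where there is a gap. The paper does not use homotopy invariance or charge conjugation. Instead it shows directly, via the boundary pairing $\omega$, that for \emph{every} nondegenerate $\psi$ one has $\calD_\psi^*=\calD_\psi$; self-adjointness then forces $\iind=0$ at once. Your route is both longer and, as written, incomplete: to verify that the quaternionic structure $J$ ``intertwines $\calT_{\psi_0}$ with its formal adjoint boundary condition'' you must first identify that adjoint condition, and the only way to do so is through the same boundary-pairing computation the paper performs --- which already yields $\calD_{\psi_0}^*=\calD_{\psi_0}$. At that point the homotopy to a special $\psi_0$ and the charge-conjugation symmetry are redundant. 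Concretely, for $\psi_0=(1,i)$ one checks that $J$ \emph{preserves} $\calD_{\psi_0}$ rather than exchanging it with an a~priori different adjoint domain; this endows the kernel with a quaternionic structure but does not by itself produce an isomorphism with the cokernel unless self-adjointness is already in hand. The paper's direct argument is shorter and yields the stronger conclusion of self-adjointness for all nondegenerate $\psi$, not merely vanishing index.
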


In the second part, we study this problem when $\Si$ is a smoothly embedded graph with vertices 
$\{q_1,q_2,...,q_N\}$ connected by edges $\{e_1,e_2,...,e_J\}$. We say that $\Si$ is admissible if the valence of
each vertex $q_i$ is even; this makes it possible to define the twisting bundle $\cI$ on $M \setminus \Si$ with
holonomy $\{\pm 1\}$ around each edge. There is an extension of the edge operator theory in this setting which
can be used to analyze $\Dirac$ in this setting, with boundary conditions imposed along the edges. This `iterated edge' theory
requires an extension of the edge operator theory that was developed in \cite{ALMP} and \cite{MazzeoWitten2}.  
The `iteration' in this moniker reflects that $M \setminus \Si$ is diffeomorphic near each vertex to a cone over a punctured sphere. 
The boundary operator $\calT_\psi$ is defined on the regular part of $\Si$, and an additional decay condition is
imposed at each vertex.  Let $\rho_i(x)$ be a smooth strictly positive function on $M\setminus \{q_i\}$, with $\rho_i(x):=\dist (q_i,x)$ in 
a neighborhood of this vertex.   Fixing an $N$-tuple of real numbers $\vec \mu = (\mu_1, \ldots, \mu_N)$, define the domain
\[
\calD_{\psi, \vec{\mu}} = \Big\{u \in \rho_1^{\mu_1}\ldots \rho_N^{\mu_N} L^2(M \setminus \Si)\Big| \calT_\psi u = 0\ \text{on each edge} \Big\},
\]
Here $\calB_0 u$ denotes the pair of leading coefficients of $u$ along each edge. 
It does not seem particularly advantageous to use different weights at different vertices, so for simplicity we write $\rho = \rho_1 \ldots \rho_N$
and write $\calD_{\psi,\mu}$ to indicate that the weight function is $\rho^\mu$ for some number $\mu$.
We then consider the mapping
\begin{equation}
\Dirac: \calD_{\psi, \mu} \longrightarrow \rho^{\mu-1} L^2( M \setminus \Si).
\label{mapL0e}
\end{equation}
\begin{theorem}
Let $\Si$ be an admissible graph with twisting bundle $\cI$ on $M \setminus \Si$ and $\psi =\{(c_1^i, d_1^i)\}$ is nondegenerate, 
see Definition \ref{nondeg}. Denote by $\Lambda_i$ the spectrum of $\hat{\Dirac}_i$, the induced Dirac operator on the punctured 
sphere obtained by blowing up the vertex $p_i$. Then \eqref{mapL0e} is Fredholm so long as $\mu \notin \Lambda_i-1$.  
Moreover, if $\mu > 1/2$, then 
\[
\mathrm{ind}(\Dirac, \calD_{\psi, \mu}) = - \sum_{i=1}^N \Big(h_i + \sum_{\gamma_j^i \in (0, \mu - 1/2)\cap \Lambda_i}  M^i_{\gamma_j}\Big).
\]
Here each $h_i$ is a certain nonnegative integer determined by some underlying analytic data of the problem, and the sum is
over all indicial roots $\gamma_j^i$ such that $\gamma_j^i$ lies in the portion of the indicial root set $\Lambda_i$ at vertex $q_i$
in the interval $(0, \mu-1/2)$. The number $M^i_{\gamma_j^i})$ is the algebraic multiplicity of $\gamma_j^i$. 
\label{theorem2}
\end{theorem}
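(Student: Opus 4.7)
The plan is to extend the edge-operator machinery used in Theorem~\ref{theorem1} to the iterated-edge setting forced by the vertices of $\Si$. Near each vertex $q_i$, the complement $M\setminus\Si$ is diffeomorphic to a truncated open cone $[0,\epsilon)_{\rho_i}\times(S^2\setminus\{p_1^i,\ldots,p_{2k_i}^i\})$, whose link is itself an edge manifold with $2k_i$ punctures where the arcs of $\Si$ enter $q_i$. I would first pass to the real blowup $\wtM$ obtained by blowing up each vertex $q_i$ and each smooth edge $e_j$; the result is a manifold with corners whose boundary hypersurfaces are the edge faces $E_j$ over the $e_j$ (on which the local model is already treated in Theorem~\ref{theorem1}) and the vertex front faces $F_i\cong S^2\setminus\{p_1^i,\ldots,p_{2k_i}^i\}$. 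The face $F_i$ meets each adjacent $E_j$ along a corner, and the cone-normal restriction of $\rho_i\Dirac$ to $F_i$ is precisely $\hat{\Dirac}_i$, inheriting the boundary operator $\calT_\psi$ at its punctures from the ambient problem.

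The first step is the Fredholm statement, which I would obtain by constructing a parametrix for $(\Dirac,\calT_\psi)$ on $\rho^\mu L^2\to\rho^{\mu-1}L^2$ within the iterated-edge calculus of \cite{ALMP,MazzeoWitten2,Usula,Usula2}. This parametrix is assembled from three ingredients: the interior symbolic parametrix; the edge parametrix along each $e_j$, available exactly as in the proof of Theorem~\ref{theorem1}; and, at each vertex $q_i$, the cone parametrix built from inverting the indicial family $\rho_i\partial_{\rho_i}+\hat{\Dirac}_i-\tfrac12$ via its Mellin transform. The second of these requires the nondegeneracy hypothesis on $\psi$; the third exists precisely when the Mellin variable $\mu+1$ avoids the spectrum $\Lambda_i$, giving the stated Fredholm criterion. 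Compatibility at the corners $E_j\cap F_i$ follows from the iterated structure and yields a remainder compact on the indicated weighted $L^2$ spaces.

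The index formula is then proved by a two-step relative-index argument. Fix $\mu_0\in(1/2,1/2+\varepsilon)$ with $\varepsilon$ small enough that no positive element of any $\Lambda_i$ lies in $(0,\mu_0-1/2)$. Parametrix comparison with the pure-edge problem of Theorem~\ref{theorem1} (whose index vanishes) leaves, as the only corrections from the vertices, the dimensions of spaces of $\Z_2$-twisted $L^2$ sections of $\slS\otimes\calI$ on $F_i$ that are annihilated by $\hat{\Dirac}_i$ and satisfy the induced edge boundary condition at the punctures. I would take $h_i$ to be the dimension of this space; it is manifestly a nonnegative integer and is finite by the Fredholm analysis of $\hat{\Dirac}_i$ itself, yielding $\mathrm{ind}(\Dirac,\calD_{\psi,\mu_0})=-\sum_i h_i$. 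Next I would invoke the relative index theorem of \cite{Mazzeo91_EllTheoryOfDiffEdgeOp}: as $\mu$ is raised from $\mu_0$ to the value in the theorem, each crossing of $\mu-\tfrac12$ past an indicial root $\gamma_j^i\in\Lambda_i$ reduces the index by the algebraic multiplicity $M^i_{\gamma_j^i}$, and summing these jumps over $i$ and $\gamma_j^i\in(0,\mu-\tfrac12)\cap\Lambda_i$ delivers the stated formula.

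I expect the main obstacle to be the genuinely iterated character of the singularity at each vertex: the normal operator $\hat{\Dirac}_i$ is itself a $\Z_2$-twisted Dirac operator on a punctured sphere with the edge boundary operator $\calT_\psi$ at each puncture, so establishing discreteness of $\Lambda_i$, defining the algebraic multiplicities $M^i_{\gamma_j^i}$, and controlling the regularity of its kernel elements require a second pass through the edge calculus at one level down, where the nondegeneracy hypothesis on $\psi$ is exactly what ensures that $\calT_\psi$ descends to a well-posed elliptic boundary condition for $\hat{\Dirac}_i$. Along the way one must track two normalization shifts: the $-1$ in $\mu\notin\Lambda_i-1$ coming from the first-order scaling of $\Dirac$ at the cone point, and the $-\tfrac12$ in the indicial-root sum coming from the conformal weight of the $L^2$ volume form on the three-dimensional cone; keeping these book-keeping conventions consistent between the ambient and the link analyses is the most delicate part of the argument.
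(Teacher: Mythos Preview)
Your outline for the Fredholm statement and for the wall-crossing part of the index (via the relative index theorem) matches the paper closely. The substantive difference is in how you propose to compute the base index $\mathrm{Ind}(\mu_0)$ for $\mu_0$ just above $1/2$.

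You propose a ``parametrix comparison with the pure-edge problem of Theorem~\ref{theorem1}.'' The paper does not do this, and it is not clear how to make such a comparison rigorous: Theorem~\ref{theorem1} concerns a different manifold-with-boundary (no vertex faces at all), and there is no obvious excision or gluing principle that would produce exactly $-\sum_i h_i$ from such a comparison without essentially redoing the vertex analysis. Instead, the paper uses the duality built into the problem: the formal adjoint of $(\Dirac,\calD_{\psi,\mu})$ is $(\Dirac,\calD_{\psi,1-\mu})$, so $\mathrm{Ind}(1-\mu)=-\mathrm{Ind}(\mu)$. If $1/2$ is non-indicial at every vertex this gives $\mathrm{Ind}(1/2)=0$ immediately. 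If $1/2$ is indicial (i.e.\ some $\hat\Dirac_i$ has nontrivial kernel), one combines $\mathrm{Ind}(1/2-\varepsilon)=-\mathrm{Ind}(1/2+\varepsilon)$ with the relative-index jump across $1/2$, which equals $-\sum_i\dim\ker(\hat\Dirac_i,\widehat\calD_\psi)$; since $\cl(\partial_\rho)$ anticommutes with $\hat\Dirac_i$ and squares to $-1$, this kernel is even-dimensional, say $2h_i$, and one reads off $\mathrm{Ind}(1/2+\varepsilon)=-\sum_i h_i$. This self-adjointness argument is both shorter and avoids the need to compare with an auxiliary problem.

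A related point: you take $h_i$ to be the full dimension of $\ker\hat\Dirac_i$, but in the paper's normalization $h_i$ is \emph{half} that dimension; the evenness is what makes the duality argument close.
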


The next section provides various notation and background definitions needed throughout the paper. Section 3 provides
a careful account of the analysis of $\Dirac$ when $\Si$ is a smooth closed curve, which culminates by showing that $(\Dirac, \calT_\psi)$
is self-adjoint, which implies that the index vanishes. Section 4 describes the new features in the iterated edge calculus, and Section 5
proves Theorem \ref{theorem2}. Finally, Section 6 contains a brief discussion of the iterated edge parametrix method. 

\textsc{Acknowledgments.} AH was partially supported by the ARC grant ``Transversality and reducible solutions in the 
Seiberg--Witten theory with multiple spinors'' of the ULB. RT was partially supported by Ministry of Science and Technology 
of Taiwan under Grant no. MOST 111-2636-M-006-023.  RM wishes to express his thanks to Siqi He and Greg Parker for
many fruitful conversations.

\section{Background}
This section contains definitions and notation used throughout the paper. 

\subsection{Graphs} 
As in the introduction, $(M,g)$ denotes a fixed compact, oriented smooth Riemannian $3$-manifold.  

\begin{definition}
A compact smooth embedded graph $\Si \subset M$ is a union $\calP \cup \calE$, where $\calP = \{q_1, \ldots, q_N\}$ and
$\calE = \{e_1, \ldots, e_J\}$ are the sets of vertices and (unoriented) edges, respectively. Each $e_j$ is a smooth arc, which
for convenience are smoothly parametrized as $\gamma_j: [0,1] \mapsto e_j$ with $\gamma_j'$ nonvanishing. 
We assume that $\del' e_j := \gamma_j(0)$ and $\del'' e_j := \gamma_j(1)$ both lie in $\calP$, and allow for the possibility 
of loops, i.e., edges with $\del'e_j = \del'' e_j$.  We assume that the interior of each $e_j$ is embedded, i.e., no edge has interior 
self-intersections, and also that no two edges intersect except at their endpoints.  A graph is called {\bf regular} if, for each $q_i$, 
the `inward-pointing' tangents $\gamma_j'(0)$ and $-\gamma_j'(1)$ to each edge $e_j$ ending at $q_i$ are distinct. 
Each vertex $q_i$ has a valence $V(q_i)$, which is the number of ends of edges meeting at that vertex.  A graph is called
{\bf even} if $V(q_i) \in 2\mathbb N$ for every $i$.
\end{definition}

In this paper we restrict attention to even, regular, smooth embedded graphs, which we call {\bf admissible}. We assume that these properties hold for graphs without further comment. 

Given such a graph $\Si$, we next consider real Euclidean (i.e., metric) line bundles $\cI$ over $M\setminus \Sigma$. The Euclidean 
structure induces a natural flat connection, so up to isomorphism $\cI$ corresponds to a homomorphism $\a\colon 
\pi_1(M\setminus\Si)\to \Z_2$.  Since $\Z_2$ is abelian, $\a$ factors through a homomorphism 
$\a\colon H_1(M\setminus\Sigma)\to \Z_2$ still denoted by the same symbol. We consider only those $\a$ such
that $\a[c_j] = -1$ for all $j$, where the $c_j$ denotes a small loop in $M \setminus \Si$ encircling the edge $e_j$ once 
and encircling no other edges. Observe that restricting $\cI$ to a small sphere centered at a vertex $q_i$, we obtain a real 
line bundle over the 2-sphere punctured at $V(q_i)$ points. This latter bundle is non-trivial on each small circle in $S^2$ 
centered at any puncture. This explains the requirement that $V(q_i)$ must be even. 

\subsection{The blowup}
Many of the analytic constructions in this paper rely on methods of geometric microlocal analysis. A starting point of that 
theory is to compactify the open space $M \setminus \Si$ not simply as $M$, but rather as a manifold with corners $M_\Si$ 
obtained by blowing up $M$ around $\Si$. The asymptotic behavior of $\Z_2$ harmonic spinor fields are more 
transparent on this space. 

The construction of $M_\Si$ proceeds in two steps, corresponding to the fact that $\Si$ itself has a `two step' stratified
structure.   In the first, $M$ is blown up around the vertex set $\calP$; the result is a manifold with boundary $M_{\calP}$,
where each $q_i$ has been replaced by a $2$-sphere. The edges of $\Si$ lift to embedded curves which connect 
boundary components of this new space. The space $M_\Si$ is obtained by blowing up these lifted curves.

More carefully, $M_{\calP}$ is the union of $M \setminus \calP$ and the unit cosphere bundles $SN_{q_i} M$ of $q_i$ in $M$,
$i = 1, \ldots, N$.  This space is endowed with the obvious topology where a smooth arc in $M$ terminating at $q_i$ 
lifts to a continuous path in $M_{\calP}$ terminating at the point in $S_i$ which records its limiting direction. The smooth
structure on $M_{\calP}$ is the minimal one with respect to which lifts of smooth functions on $M$ and spherical polar coordinates 
$(\rho, \omega)$ around each $q_i$ are $\calC^\infty$. These polar coordinates provide smooth coordinate charts near the boundary
faces of $M_{\calP}$.  This blowup is also denoted $[M; \calP]$. There is a natural blowdown map $M_{\calP} \to M$. 

The edges $e_j$ lift to $M_{\calP}$ as smooth nonintersecting arcs, each still denoted $e_j$, with endpoints at the boundary 
components of $M_{\calP}$.  There is a tubular neighborhood $\calU_j$ around each $e_j$ which is identified in terms of a 
fixed trivialization of the normal bundle of $e_j$ as $[0,1] \times D_{r_0}$, where $D_{r_0}$ is the disk of radius $r_0$ in $\CC$.
The subscript $r_0$ is dropped for simplicity. We then blow up each of these edges to define the final space $M_\Si$.  
This final blowup is defined by replacing each point $z \in e_j$ with its spherical normal bundle in $M$; in this product 
identification, we replace each disk $D$ with an annulus $A$, where the inner boundary of $A$ is a copy of $S^1$ which
parametrizes the normal directions of approach to $e_j$ at $z$. The blowup restricted to $\calU_j$ is thus
$[0,1] \times A$.  The general notation which encodes these two steps is $M_\Si = [ [M; \calP] ; \calE] = [ M_{\calP}; \calE]$.
As before, there is a well defined topology and smooth structure on $M_\Si$, defined just as above. 
There are two blowdown maps: one from $M_\Si$ to $M_{\calP}$ and another from $M_\Si$ to $M$. The latter, which
is the only one we use, is denoted $\beta$. 

\begin{figure} [htb]
	\begin{center} 
		\begin{tikzpicture} 
			\draw (0,0) -- (5,0) node [above, pos=0.5] {$e_2$};
			\draw (0,0) -- (60:5) node [right, pos=0.5] {$e_1$};
			\draw (5,0) -- (60:5) node [right, pos=0.5] {$e_3$};
			
			\draw (2.5,1.43) circle (2.89);
			\filldraw (0,0) circle (0.05) node[left] {$p_3$};
			\filldraw (5,0) circle (0.05) node[right] {$p_1$};
			\filldraw (2.5,4.32) circle (0.05) node[above] {$p_2$};
			\node at (5.5,2.4) {$e_6$}; 
			\node at (-0.5,2.4) {$e_4$}; 
			\node at (2.5,-1.7) {$e_5$}; 
			
			\begin{scope} [shift={(8.2,0)}]
				
				\draw (0,0) circle (1);
				\draw (-1,0) arc (180:360:1 and 0.2);
				\draw[dashed] (1,0) arc (0:180:1 and 0.2);
				
				\draw (5,0) circle (1);
				\draw (4,0) arc (180:360:1 and 0.2);
				\draw[dashed] (6,0) arc (0:180:1 and 0.2);
				
				\draw (2.5,4.32) circle (1);
				\draw (1.5,4.32) arc (180:360:1 and 0.2);
				\draw[dashed] (3.5,4.32) arc (0:180:1 and 0.2);
				
				\draw (0.6,0.05) -- (4.4,0.05) (0.6,-0.05) -- (4.4,-0.05); 
				\draw (4.65,0.5) -- (2.8,3.7) (4.75,0.55) -- (2.9,3.75); 
				\draw (0.3,0.65) -- (2.1,3.75) (0.4,0.6) -- (2.2, 3.7); 
				
				
				\draw ([shift={(2.5,1.43)}]-17:2.84) arc[radius=2.84, start angle=-17, end angle= 77];
				\draw ([shift={(2.5,1.43)}]103:2.84) arc[radius=2.84, start angle=103, end angle= 198];
				\draw ([shift={(2.5,1.43)}]223:2.84) arc[radius=2.84, start angle=223, end angle= 317];
				
				\draw ([shift={(2.5,1.43)}]-17:2.94) arc[radius=2.94, start angle=-17, end angle= 77];
				\draw ([shift={(2.5,1.43)}]103:2.94) arc[radius=2.94, start angle=103, end angle= 198];
				\draw ([shift={(2.5,1.43)}]223:2.94) arc[radius=2.94, start angle=223, end angle= 317];
				
				\draw (0.6,0) ellipse (0.03 and 0.05);
				\draw (4.4,0) ellipse (0.03 and 0.05);
				\draw[rotate around={60:(0.35,0.625)},black](0.35,0.625) ellipse (0.03 and 0.05);
				\draw[rotate around={-60:(4.7,0.525)},black](4.7,0.525) ellipse (0.03 and 0.05);
				
				\draw[rotate around={60:(2.15,3.72)},black](2.15,3.72) ellipse (0.03 and 0.05);
				\draw[rotate around={-60:(2.847,3.73)},black](2.847,3.73) ellipse (0.03 and 0.05);
			\end{scope}
			\node at (10.7,0.3) {$S_2$};
			\node at (9.1,2.2) {$S_1$};
			\node at (12.3,2.2){$S_3$};
			\node at  (6.9,0) {$F_3$};
			\node at  (14.5,0) {$F_1$};
			\node at  (10.7,5.6) {$F_2$};
			\node at (13.8,2.4) {$S_6$}; 
			\node at (7.6,2.4) {$S_4$}; 
			\node at (10.7,-1.9) {$S_5$};  
			
		\end{tikzpicture}
	\end{center}
\caption{A graph $\Si$ and its blowup $M_\Si$. The latter is diffeomorphic to the exterior of the bounded domain shown on the right}
\end{figure}
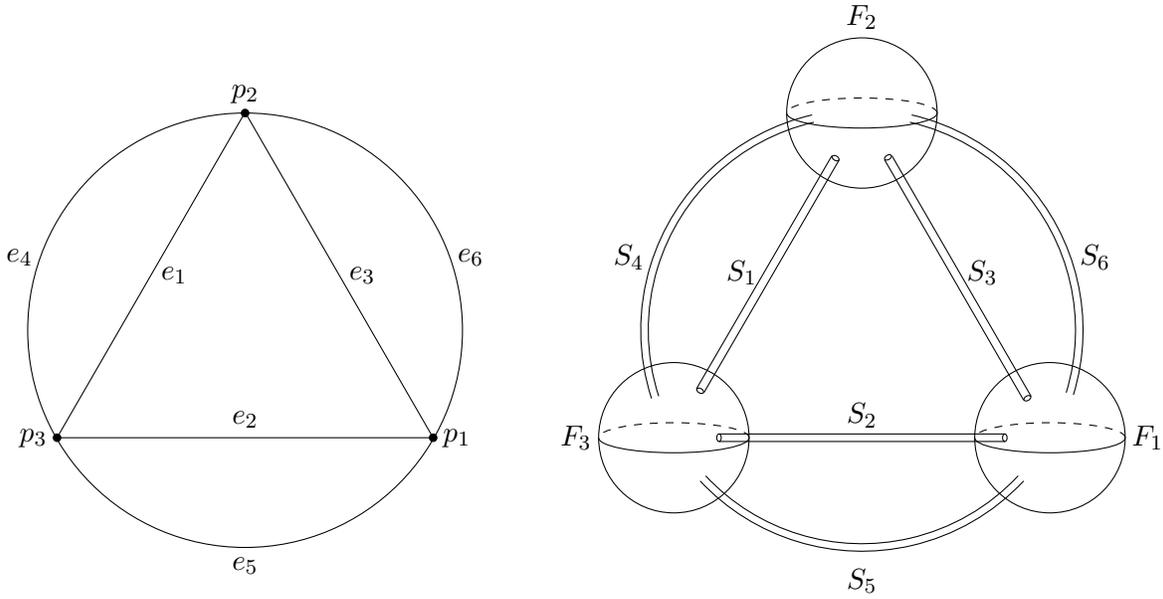

The space $M_\Si$ is a manifold with corners. It has $N + J$ boundary faces, each one covering each one of
the vertices $q_i$ or one of the edges $e_j$.  We write these faces as $S_i$ and $F_j$ respectively, with the slight
abuse of notation that each $S_i$ here is the lift of a face $S_i$ of $M_\calP$. Each
$S_i$ is a copy of $S^2$ blown up at $V(q_i)$ points, and thus a surface with $V(q_i)$ boundary components, 
each a circle. 

In terms of the fixed trivialization of the normal bundle $Ne_j$, there are cylindrical coordinates $(r,\theta, y)$ around $F_j$
in $M_\Si$. Note that $r$ is a defining function for $F_j$ in $M_\Si$, but down on $M$, the distance to $e_j$ is 
comparable to $\rho r$. 

Many constructions and results below are more naturally phrased on $M_\Si$ rather than $M$.  We can make 
more detailed and simpler statements about asymptotic behavior of harmonic spinors, for example, on $M_\Si$.
In the following we typically work near a given face $S_i$ or $F_j$, and will then often drop the index.
We always fix spherical coordinates $(\rho, \omega)$ near a given $S$, and cylindrical coordinates $(r,\theta, y)$
near $F$. Thus $y = 0$ corresponds to the intersection $S \cap F$, which is a circle, and the restrictions of $(r,\theta)$
are coordinates around this boundary of $S$.

\subsection{The twisted Dirac operator}
Any compact oriented $3$-manifold $M$ is spin, with spin structures classified by $H^1(M; \ZZ_2)$.  Having fixed both a metric $g$ 
and a spin structure on $M$, the associated spin bundle $\slS$ is a trivializable rank two complex vector bundle. The trivializations 
of this bundle are classified by $H^3(M, \ZZ) = \ZZ$, and in all of the following we fix not only the spin structure but also a trivialization 
of $\slS$.  We also fix the metric and naturally associated flat connection on $\calI$. 

These data determine the $\ZZ_2$-spin bundle $\slS\otimes \calI $ over $M\setminus\Si$, which extends smoothly to
a bundle over $M_\Si$. The induced connection is
\[
\nabla^{\slS\otimes\calI}=id_{\slS}\otimes \nabla^{\calI} +\nabla^{\slS}\otimes id_{\calI},
\]
which we typically denote simply by $\nabla$. The Dirac operator on this bundle is given by the usual formula
\[
\slD:=\sum_{i=1}^3 \mathtt{cl}({\bf e}_i)\cdot\nabla_{\bf{e}_i},
\]
where $\{ \bf{e}_1, \bf{e}_2, \bf{e}_3\}$ is any local orthonormal frame of $TM$.  Using the chosen trivialization, each
fiber $\slS_p$ is identified with a copy of $\CC^2$, so 
a section of $\slS$ becomes a $\CC^2$-valued function $u = \begin{pmatrix} u_1 \\ u_2 \end{pmatrix}$, and thus $\slD$ is a $2$-by-$2$ matrix
of differential operators.  We now write out different coordinate expressions for $\slD$ in the flat model case, where $(M,g) = \RR^3$ with 
Euclidean metric and $e$ is a straight line. 

\begin{remark}
If $E$ is any Hermitian bundle of arbitrary rank over $M$ with unitary connection $\nabla_E$, we can twist $\Dirac$ further by $E$ in the usual way
to obtain an operator $\Dirac_E$. 
The analysis below is unchanged in all essential ways, but of course the eventual index formulas may be slightly different. 
\end{remark}

Using linear coordinates $y$ along $e$ and $(x_1, x_2)$ in the normal directions, we have
\[
\Dirac = \sigma_1 \del_{x_1} + \sigma_2 \del_{x_2} + \sigma_3 \del_y, 
\] 
where
\[
\sigma_1 = \begin{pmatrix} 0 & i \\ i & 0 \end{pmatrix},\ 
\sigma_2 = \begin{pmatrix} 0 & 1 \\ -1 & 0 \end{pmatrix},\ 
\sigma_3 = \begin{pmatrix} -i & 0 \\ 0 &  i\end{pmatrix}. 
\]
In terms of the complex coordinate $z = x_1 + i x_2$ this becomes
\[
\Dirac = 2 \begin{pmatrix} 0 & i \\ 0 & 0 \end{pmatrix} \del_z + 2 \begin{pmatrix} 0 & 0 \\ i & 0 \end{pmatrix}\del_{\bar{z}} + \sigma_3 \del_y.
\]
Despite this complex notation, it is important to recall that $\Dirac$ is a {\it real} operator, and certain operations below (e.g., 
the boundary condition along the edges discussed in Section 3) are only {\it real} and do not commute with this complex structure.  
Nonetheless, complex notation is often simpler, and we use it frequently.

Now introduce cylindrical coordinates $(r,\theta, y)$, where $z = re^{i\theta}$.  Since
\[
\del_{x_1} = \cos \theta \, \del_r - \frac{\sin \theta}{r} \del_\theta\ \ \mbox{and}\ \ \del_{x_2} = \sin \theta \, \del_r + \frac{\cos \theta}{r} \del_\theta,
\]
we  obtain that
\begin{equation}
\Dirac =  \begin{pmatrix} 0 &  i e^{-i\theta} \\ i e^{i\theta} & 0 \end{pmatrix} \del_r + 
\begin{pmatrix} 0 &  e^{-i\theta} \\ - e^{i\theta} & 0 \end{pmatrix} \frac{1}{r} \del_\theta  + \sigma_3 \del_y.
\label{Dirac-cyl}
\end{equation}
The twisting by $\calI$ appears in that we consider the action of this operator on sections $u$ satisfying 
$u(r,\theta + 2\pi, y) = - u(r, \theta, y)$. 

The expression for $\Dirac$ near a vertex in this flat model takes the form
\begin{equation}
\Dirac = \sigma \Bigg((\del_\rho  + \frac{1}{\rho}) - \frac{1}{\rho} \widehat{\Dirac}\Bigg),
\label{Dirac-spherical}
\end{equation}
where $\sigma$ is Clifford multiplication with respect to $\del_\rho$ and $\widehat{\Dirac}$ is the tangential twisted
Dirac operator on $S^2$ with holonomy $-1$ around the points where this sphere intersects the outgoing rays. 

For a more general metric $g$, these expressions are the leading terms, in a precise sense, of the coordinate 
representations for $\Dirac$.  For example, using Fermi coordinates in a neighborhood of an edge $e$, 
this operator is a sum of the flat model above and an error term $E$ which is a sum of multiples of the vector fields 
$r\del_r$, $\del_\theta$, $r\del_y$ with coefficients which are smooth in $(r,\theta,y)$.  Similarly, choosing the spherical coordinates
carefully for the curved case, we can assume that the edges are straight rays emanating from the origin, and
then $\Dirac$ is the sum of the flat model \eqref{Dirac-spherical} and an error term $E$ which is a sum of a
smooth multiple of $\rho \del_\rho$ and a smoothly varying family of first order operators on $S^2$.

Since our goal in this paper is to prove an index theorem, we can eventually perform a deformation of the metric
and assume that these error terms vanish.  This simplifies the analysis somewhat.  However, the operators \eqref{Dirac-cyl} 
and \eqref{Dirac-spherical} are irremediably singular, and less standard analytic techniques are needed.  We employ
the systematic methods of the `iterated edge calculus'.  The (simple) edge calculus developed 
in \cite{Mazzeo91_EllTheoryOfDiffEdgeOp,  MaVe,  Usula,  Usula2}, provides an incisive set of tools to study 
$\Dirac$ near the edges $e_i$ away from the vertices. However, the structure of $\Si$ near each vertex is
stratified of depth two. This means (in this specific case) that near $q_i$, $M$ is diffeomorphic to a cone
where the cross-section itself is singular, here a punctured sphere.  A depth two stratified space blows up
to a manifold with corners of codimension two. The analysis of elliptic operators with iterated edge structure 
is somewhat more intricate than the simple edge case, but very similar in spirit. Material sufficient for what
we need here appears in \cite{ALMP} and \cite{MazzeoWitten2}, see also \cite{MazzeoMontcouquiol11_Stoker}.

Amongst other conclusions, this analysis leads to Fredholm (or semi-Fredholm) mapping properties of $\Dirac$ 
between various weighted spaces, and sharp regularity statements for solutions of $\Dirac u = 0$ and $\Dirac u = f$. 
These results are obtained by parametrix methods. A parametrix $G$ for $\Dirac$ is an approximate inverse (relative 
to a given function space), such that $\Dirac G - I$ and $G \Dirac - I$ are negligible remainders. The language
of blowups is very convenient for constructing this parametrix and is described in some detail in various places
below.  To make this argument more accessible, we first discuss this when $\Si$ is an embedded curve, and only
after that when $\Si$ is a graph. 

\subsection{$\ZZ_2$-harmonic spinors}
The central objects motivating this paper are the solutions of $\Dirac u = 0$ on $M \setminus \Si$ which vanish 
along $\Si$. This vanishing condition is characterized by requiring that both $u$ and $\nabla u$ lie in $L^2$. Thus define
\begin{definition}
Let $\Sigma$ be either a smooth closed curve or an admissible graph. Then
\begin{equation}
Z_0 := \big \{ u \in L^2( M_\Si; \slS \otimes \calI)\mid   \Dirac u = 0\big \}
\label{defZ0}
\end{equation}
is called the space of singular $\ZZ_2$ harmonic spinors, and its subspace
\begin{equation}
Z_1 := \big \{u \in H^1(M_\Si; \slS \otimes \calI)\mid \Dirac u = 0\big \},
\label{defZ1}
\end{equation}
is called simply the space of $\ZZ_2$ harmonic spinors or regular $\Z_2$ harmonic spinors if we wish to emphasize the distinction. 
\end{definition}

In the above definition and also in the sequel, $H^k$ denotes the Sobolev space consisting of sections $u$ with $\nabla^j u\in L^2$ for $j\le k$.  

We show below that $Z_1$ is finite dimensional, and $Z_0$ is infinite dimensional and parametrized by certain sections 
along $\Si$. 


\section{Analysis of $\Dirac$ when $\Si$ is an embedded curve}
We now turn to a detailed study of the mapping properties of $\Dirac$ and the regularity of solutions of $\Dirac u = 0$
in the simpler situation where $\Si$ is a smooth closed curve.  Some parts of this appeared in \cite{Takahashi15_Z2HarmSpinors_Arx}
and \cite{Parker-DefHS3M}, but as noted earlier, will be treated as an instance of the edge operator theory. The material here is drawn from \cite{Mazzeo91_EllTheoryOfDiffEdgeOp} adapted to this particular setting.

\subsection{Background for the edge calculus}

We first explain some basic structures underlying the edge calculus.

\medskip

The starting point is that the operator $\Dirac$, as expressed in \eqref{Dirac-cyl}, is an {\bf incomplete edge operator}.  We explain
this terminology. When $\Si$ is a smooth embedded curve, $M_\Si$ is a manifold with boundary $F = \del M_\Si$, where $F$ is the
total space of an $S^1$ bundle over $\Si$. We define the class $\calV_e$ of smooth {\bf edge vector fields} on $M_\Si$ as consisting of
those smooth vector fields which are unconstrained in the interior and which lie tangent to these $S^1$ fibers at $F$.   In local
cylindrical coordinates, any $V \in \calV_e$ is a linear combination with smooth coefficients of the coordinate vector fields 
$r\del_r$, $r\del_y$ and $\del_\theta$. Note that $\calV_e$ is Lie subalgebra of the space of all smooth vector fields. A differential 
operator on $M_\Si$ (or on $M \setminus \Si$) is called a differential edge operator if it is locally the sum of products of elements of 
$\calV_e$  with smooth (possibly endomorphism-valued) coefficients. The space of differential edge operators of order $k$ is denoted 
$\mathrm{Diff}_e^k(M_\Si)$.  Our operator $\Dirac$ is almost of this form, and indeed $r \Dirac \in \mathrm{Diff}_e^1(M_\Si)$ (with
values in the endomorphisms of the spinor bundle).  The modifier `incomplete' signifies that $\Dirac$ is $1/r$ times an edge operator.

\medskip

The smooth volume form $dV_g$ on $M$ lifts to a smooth nonvanishing
multiple of $r dr d\theta dt$ on $M_\Si$.  Considering first just scalar functions, the space $L^2(M_\Si; dV_g)$ leads
naturally to the edge Sobolev spaces of positive integer order:
\begin{definition}
For every $k\in \mathbb{N}$, we define the {\bf edge Sobolev space} to be
\[
H^k_e(M_\Si; dV_g) = \big \{ u \in L^2\mid \   V_1 \ldots V_j u \in L^2\ \mbox{for any}\ V_i \in \calV_e\ \mbox{and}\ j \leq k\big \}.
\]
\end{definition}
From these we can define edge Sobolev spaces of noninteger order by interpolation, and spaces of negative order
by duality.   We also define the weighted edge Sobolev spaces
\[
r^\mu H^s_e(M_\Si; dV_g) =\big  \{ u = r^\mu v\mid \  v \in H^s_e\big\}
\]
for any $\mu, s \in \RR$.   Notice that (in some region $\{ r < r_0\}$)
\begin{equation}
r^\lambda \in r^{\mu} L^2(r dr d\theta dy) \quad\Leftrightarrow\quad r^{\lambda - \mu} \in L^2(rdr d\theta dy)\quad \Leftrightarrow \quad\lambda > \mu - 1.
\label{indrindw}
\end{equation}
In other words the `threshold' weight $\mu$ for which $r^\lambda \in r^{\mu} L^2$ is $\mu = \lambda + 1$. 

These function spaces can be defined immediately also for sections of any bundle on $M_\Si$, and in particular for $\slS \otimes \calI$. 

The following is an immediate consequence of the definitions: 
\begin{proposition}
For any $\mu, s \in \RR$, the mapping
 \begin{equation}
\Dirac:  r^{\mu} H^{s+1}_e(M_\Si ,  \slS  \otimes \calI) \longrightarrow r^{\mu-1} H^s_e(M_\Si ,  \slS  \otimes \calI)
 \label{Dmu}
\end{equation}
is bounded. 
\end{proposition}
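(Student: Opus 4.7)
The plan is to reduce the statement to two elementary facts: first, that $\Dirac$ is of the form $r^{-1}$ times a first-order differential edge operator, and second, that multiplication by $r^\mu$ intertwines edge operators with themselves modulo lower-order edge operators.

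Concretely, I would begin by inspecting the local expression \eqref{Dirac-cyl} near an edge face $F$ and \eqref{Dirac-spherical} near a spherical face $S$. In the first expression, the coefficients of $\partial_r$ and $r^{-1}\partial_\theta$ are smooth matrix-valued functions of $\theta$, and $\partial_y$ has a constant coefficient; rewriting, $r\Dirac$ is a smooth combination of the edge vector fields $r\partial_r$, $\partial_\theta$, and $r\partial_y$, so $r\Dirac \in \mathrm{Diff}^1_e(M_\Si; \slS\otimes\calI)$. The same check near a vertex using \eqref{Dirac-spherical} (together with the fact that the error term is a smooth combination of $\rho\partial_\rho$ and first-order operators tangent to $S^2$) shows $\rho \Dirac \in \mathrm{Diff}^1_e(M_\Si; \slS\otimes\calI)$. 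Thus globally on $M_\Si$, $\Dirac = r^{-1} P$ where $P \in \mathrm{Diff}^1_e$ with smooth bundle-valued coefficients, and $r$ here denotes a global boundary defining function for the edge/vertex faces.

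Next, I would invoke the defining property of the edge Sobolev spaces: every $V \in \calV_e$ acts boundedly $H^{s+1}_e \to H^s_e$ for nonnegative integer $s$ by construction, and the statement extends to all $s \in \RR$ by interpolation and duality. Since $\mathrm{Diff}^1_e$ is generated by $\calV_e$ over $C^\infty(M_\Si)$ (with bundle coefficients), one obtains $P: H^{s+1}_e \to H^s_e$ boundedly. Therefore $\Dirac = r^{-1} P : H^{s+1}_e \to r^{-1} H^s_e$ boundedly, since $r^{-1}$ is by definition an isometry from $H^s_e$ onto $r^{-1} H^s_e$.

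Finally, to install the weight $r^\mu$, I would write any $u \in r^\mu H^{s+1}_e$ as $u = r^\mu v$ with $v \in H^{s+1}_e$, and expand
\[
\Dirac u = r^{-1} P(r^\mu v) = r^{\mu - 1}\bigl( P v + [r^{-\mu} P, 1] r^\mu v\bigr) = r^{\mu-1}\bigl(P v + Q v\bigr),
\]
where $Q := r^{-\mu}[P, r^\mu]$ is again a first-order edge operator (indeed a zeroth-order one, as $[V, r^\mu] = V(r^\mu)$ is a bounded smooth multiple of $r^\mu$ for any $V \in \calV_e$, since $r\partial_r(r^\mu) = \mu r^\mu$ while $\partial_\theta$ and $r\partial_y$ annihilate $r^\mu$). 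Hence $(P+Q)v \in H^s_e$, so $\Dirac u \in r^{\mu-1} H^s_e$ with norm controlled by $\|v\|_{H^{s+1}_e} = \|u\|_{r^\mu H^{s+1}_e}$, which is the claimed boundedness. No genuine obstacle arises; the only mild care is in verifying that $r^{-\mu}[P, r^\mu]$ is a bounded-coefficient edge operator, which is the commutator computation above.
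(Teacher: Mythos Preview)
Your argument is correct and is exactly the elaboration the paper has in mind: the paper does not give a proof at all, merely stating that the proposition ``is an immediate consequence of the definitions,'' having just observed that $\Dirac$ is $r^{-1}$ times a first-order edge operator. Your write-up simply unpacks that sentence.

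Two small remarks. First, this proposition sits in Section~3, where $\Si$ is a smooth closed curve, so $M_\Si$ is a manifold with boundary and there are no vertex faces; your discussion of \eqref{Dirac-spherical} and $\rho\Dirac$ is extraneous here (though harmless). Second, the middle expression in your displayed equation contains a slip: $[r^{-\mu}P,1]$ is identically zero. What you want is $r^{-1}P(r^\mu v) = r^{\mu-1}(r^{-\mu}Pr^\mu)v = r^{\mu-1}(P + r^{-\mu}[P,r^\mu])v$, and then your identification $Q = r^{-\mu}[P,r^\mu]$ and the subsequent commutator computation are fine.
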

In particular, when $\mu=1$, $s=0$, the mapping
\begin{align*}
\slD: r H^{1}_e(M_\Si ,  \slS  \otimes \calI) \longrightarrow  L^2(M_\Si ,  \slS  \otimes \calI).
\end{align*}
is actually defined on the traditional Sobolev spaces by the following lemma.
\begin{lemma}\label{rH=L21}
When $\Sigma$ is a smooth closed curve, $rH^1_e=H^1$.
\end{lemma}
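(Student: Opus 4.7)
My plan is to reduce the statement to local analysis in a tubular neighborhood of the edge face $F$ of $M_\Sigma$, and then to a single Hardy-type estimate $\|u/r\|_{L^2}\leq C\|u\|_{H^1}$ for sections of $\slS\otimes\calI$. Away from $F$, both spaces agree with the classical $H^1$ on a compact region on which $r$ is bounded above and below, so nothing needs to be checked there. Inside the tubular neighborhood I will work in cylindrical coordinates $(r,\theta,y)$, in which the smooth Riemannian volume form is a bounded multiple of $r\,dr\,d\theta\,dy$. Using the Cartesian identities
\[
\partial_{x_1} = \cos\theta\,\partial_r - \tfrac{\sin\theta}{r}\partial_\theta, \qquad \partial_{x_2} = \sin\theta\,\partial_r + \tfrac{\cos\theta}{r}\partial_\theta,
\]
I will rewrite both the $H^1$ and $rH^1_e$ norms in terms of the same four quantities $u$, $\partial_r u$, $r^{-1}\partial_\theta u$, $\partial_y u$.

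More precisely, unwinding definitions, $u\in rH^1_e$ means $v:=u/r$ satisfies $v,\ r\partial_r v,\ \partial_\theta v,\ r\partial_y v\in L^2(r\,dr\,d\theta\,dy)$, which after substituting $v=u/r$ and simplifying is equivalent to
\[
u/r,\ \partial_r u,\ r^{-1}\partial_\theta u,\ \partial_y u \in L^2,
\]
while $u\in H^1$ is equivalent to the same list with $u$ in place of $u/r$. Because $r$ is bounded, $u/r\in L^2$ forces $u\in L^2$, giving the inclusion $rH^1_e\subseteq H^1$ at once. The reverse inclusion will reduce to the Hardy estimate $\|u/r\|_{L^2}\leq C\|u\|_{H^1}$, which promotes the weaker conclusion $u\in L^2$ to the stronger $u/r\in L^2$.

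The Hardy step is the only non-trivial point, and it is exactly where the twisting by $\calI$ enters. In a local trivialization near $F$, a section of $\slS\otimes\calI$ satisfies $u(r,\theta+2\pi,y)=-u(r,\theta,y)$, so its Fourier series in $\theta$ consists of half-integer modes only: $u=\sum_{n\in\ZZ+\frac12} u_n(r,y)\,e^{in\theta}$. For each fixed $(r,y)$ Parseval yields
\[
\int_0^{2\pi}|u|^2\,d\theta = 2\pi\sum|u_n|^2 \leq 8\pi\sum n^2|u_n|^2 = 4\int_0^{2\pi}|\partial_\theta u|^2\,d\theta,
\]
since every index satisfies $|n|\geq \tfrac12$. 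Dividing by $r^2$ and integrating against $r\,dr\,dy$ gives $\|u/r\|_{L^2}\leq 2\|r^{-1}\partial_\theta u\|_{L^2}$, and since $r^{-1}\partial_\theta u = \cos\theta\,\partial_{x_2}u - \sin\theta\,\partial_{x_1}u$ is pointwise bounded by $|\nabla u|$, the estimate $\|u/r\|_{L^2}\leq C\|u\|_{H^1}$ follows. The main (and only) conceptual obstacle is precisely this reliance on the twisting: without it, the $n=0$ mode would be permitted and $\theta$-independent functions would belong to $H^1$ but not to $rH^1_e$, so the identity genuinely depends on the $\ZZ_2$ structure of $\calI$.
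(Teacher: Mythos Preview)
Your proof is correct and essentially coincides with the argument the paper invokes. The paper itself merely records the $rH^1_e$ norm as $\big(\int (|u|/r)^2 + |\nabla u|^2\big)^{1/2}$ and then cites Lemma~2.6 of \cite{Takahashi15_Z2HarmSpinors_Arx} for the equivalence with the $H^1$ norm; the content of that cited lemma is precisely the Hardy-type estimate $\|u/r\|_{L^2}\leq C\|\nabla u\|_{L^2}$ obtained from the antiperiodicity in $\theta$, which you have written out in full. Your additional remark that the identity fails for untwisted sections (the $n=0$ mode) is apt and makes explicit why the $\ZZ_2$ twist is essential here.
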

\begin{proof}
By the definition of $rH^1_e$, it is a completion of smooth sections equipped with the norm
\begin{align*}
\|u\|_{rH^1_e}=\Big(\int_{M\setminus\Sigma}\Big(\frac{|u|}{r}\Big)^2+|\nabla u|^2\Big)^{\frac{1}{2}}.
\end{align*}
The argument of Lemma 2.6 in \cite{Takahashi15_Z2HarmSpinors_Arx} shows that this norm is equivalent to $H^1$-norm.
\end{proof}

In the next several subsections we introduce tools and results which allow us first to describe
the precise regularity of solutions to $\Dirac u = 0$ near $\Si$, where $u$ is assumed to lie in some $r^\mu L^2$,
and then to state the key mapping properties of $\Dirac$. We then formulate a Fredholm boundary problem 
for this operator which is intimately related with the infinitesimal deformations of $\Z_2$-harmonic spinors. 

\subsection{Indicial roots}
The first main goal is to identify the values of $\mu$ for which \eqref{Dmu} has a chance of being `well-behaved'.  It turns out that 
there is no value of $\mu$ for which this mapping is Fredholm.  This is not due to an injudicious choice
of function space, but instead reflects important features of this problem.   As we explain now, this mapping
fails to have closed range only for a discrete set of values of $\mu$, but for all other values of $\mu$ it has closed range,
but either its nullspace or cokernel are infinite dimensional. 

One component in understanding these phenomena is the existence of good approximate solutions of the form
$u(r,\theta,y) = r^\lambda \tilde{u}(\theta,y)$ for certain values of $\lambda$.    Notice that if $\tilde{u}$ is smooth,
then $\Dirac ( r^\lambda \tilde{u}) = r^{\lambda-1} f$ for some function (or section) $f$ which is smooth for $r \geq 0$. 
The value $\lambda$ is called an indicial root of $\Dirac$ if there exists some smooth $\tilde{u}(\theta,y)$  such that
\[
\Dirac (r^\lambda \phi) = \calO(r^{\lambda});
\]
in other words, there is a cancellation of the leading order term on the right.  We tacitly assume that $\lambda$ is real,
but this definition still makes good sense and holds even when $\lambda$ is complex.  

To calculate these indicial roots, first notice that since $\del_y$ appears in $\Dirac$ with a smooth nonsingular 
coefficient, this term plays no role in the calculation of $\lambda$.  In fact, the $y$-dependence of $u$ plays
no role either. The definition is also unaffected if the coefficient function $\tilde{u}$ depends smoothly on $r$, 
since we can just replace it by its leading Taylor coefficient at $r=0$.  Hence we may as well replace $\Dirac$ 
by its flat model, i.e., the operator when $M = \RR^3$ and $\Si$ is a straight line.  With that, it is completely
natural to write sections of $\slS \otimes \calI$ as column vectors with two components, and incorporate the $\Z_2$ 
twist by imposing the antiperiodicity condition $\tilde{u}(\theta+2\pi, y) = -\tilde{u}(\theta,y)$.  

Using the antiperiodicity, the eigenfunctions for the tangential operator $(1/i) \del_\theta$ are $e^{i(k+\frac12)\theta}$, $k \in \ZZ$.  
Reducing to each eigenspace, it suffices finally to seek solutions to the model equation $\Dirac u = 0$ where
\[
u( r, \theta) = \begin{pmatrix} a \\ b \end{pmatrix}  e^{i (k + \frac12)\theta} r^\lambda
\]
for constants $a, b \in \CC$.  We compute then that
\[
\begin{split}
& \Dirac \begin{pmatrix} 0 \\ 1 \end{pmatrix} e^{i(k+\frac12)\theta} r^\lambda = 
\begin{pmatrix}  1 \\ 0 \end{pmatrix} ( i e^{i(k-\frac12)\theta} (\lambda + (k+\tfrac12)) r^{\lambda-1}, \\ 
& \Dirac \begin{pmatrix} 1 \\ 0 \end{pmatrix} e^{i(k+\frac12)\theta} r^\lambda = 
\begin{pmatrix}  0 \\ 1 \end{pmatrix} ( i e^{i(k+\frac32)\theta} (\lambda - (k+\tfrac12)) r^{\lambda-1}. 
\end{split}
\]
and from this we may easily deduce the values of the indicial roots: 

\begin{proposition}
The indicial roots of $\Dirac$ consist of the values $\lambda_k  := k + \tfrac12$, $k \in \ZZ$, with coefficient functions $e^{i(k+\frac12)\theta}$.
\end{proposition}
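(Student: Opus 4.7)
My plan is that this proposition is essentially a bookkeeping consequence of the explicit formulas already displayed in the excerpt, together with a Fourier mode decomposition argument. The setup reductions have been made: we may replace $\Dirac$ by its flat model, drop the $\del_y$ term (which contributes a smooth nonsingular coefficient and hence does not affect which powers of $r$ produce cancellation), and seek $u$ with separated $r$-dependence of the form $r^\lambda$ times an angular section. The $\Z_2$-twisting is encoded by the antiperiodicity $\tilde u(\theta+2\pi)=-\tilde u(\theta)$, forcing the angular frequencies of each component of $\tilde u$ to lie in $\ZZ+\tfrac12$.

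Following the authors' calculation, the first step is to evaluate $\Dirac$ on a general separated ansatz
\[
u = \begin{pmatrix} a\, r^\lambda e^{i m_1 \theta} \\ b\, r^\lambda e^{i m_2 \theta} \end{pmatrix},\qquad m_1, m_2 \in \ZZ + \tfrac12,
\]
using the cylindrical expression \eqref{Dirac-cyl}. A direct computation — essentially the same as the two displayed formulas — yields
\[
\Dirac u \;=\; r^{\lambda-1}\begin{pmatrix} i\,b(\lambda+m_2)\, e^{i(m_2-1)\theta} \\ i\,a(\lambda-m_1)\, e^{i(m_1+1)\theta} \end{pmatrix}.
\]
The crucial structural feature to emphasize is that $\Dirac$ shifts the angular Fourier mode in each component by $\pm 1$, so the two coefficients on the right are supported in distinct Fourier modes and cannot cancel each other. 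Therefore $\Dirac u = O(r^\lambda)$ forces $a(\lambda - m_1) = 0$ and $b(\lambda+m_2) = 0$ independently.

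The second step is to read off the indicial roots. A nonzero first component requires $\lambda = m_1 \in \ZZ+\tfrac12$; a nonzero second component requires $\lambda = -m_2 \in \ZZ+\tfrac12$. Hence the set of indicial roots is precisely $\{k+\tfrac12 : k \in \ZZ\}$. For each $\lambda_k = k+\tfrac12$, the indicial null space is spanned by $(e^{i(k+1/2)\theta},0)^T$ (from $m_1 = k+\tfrac12$) and $(0,e^{-i(k+1/2)\theta})^T$ (from $m_2 = -k-\tfrac12$), which matches the coefficient functions named in the proposition.

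There is no substantial obstacle: the only things to be careful about are (i) verifying that the two components of $\Dirac u$ genuinely lie in distinct Fourier modes so the vanishing conditions decouple, and (ii) checking exhaustiveness — namely that an indicial solution which is a finite sum of such Fourier ansätze (or even one that also carries smooth $r$-dependence) reduces, by linearity and by replacing the coefficient by its Taylor leading term at $r = 0$ as observed in the excerpt, to one of the cases analyzed above.
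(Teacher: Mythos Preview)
Your proposal is correct and follows essentially the same approach as the paper: the paper reduces to the flat model, drops the $\del_y$ term, decomposes into angular Fourier modes $e^{i(k+1/2)\theta}$ enforced by antiperiodicity, and then reads off the indicial roots from the two displayed formulas for $\Dirac$ applied to $(1,0)^T e^{i(k+1/2)\theta} r^\lambda$ and $(0,1)^T e^{i(k+1/2)\theta} r^\lambda$. Your version is a mild generalization in that you allow the two components to carry different angular modes $m_1, m_2$ from the outset, but since the operator swaps the two slots this comes to the same thing; one small imprecision is that the two output terms sit in \emph{different components} of the column vector and hence decouple automatically, so the ``distinct Fourier modes'' remark is only needed when you pass to sums of such ans\"atze, which you correctly note in point~(ii).
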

With \eqref{indrindw} in mind, define the {\bf indicial  weights} of $\Dirac$ to be the set of threshold weights corresponding to these indicial 
values namely 
\[
\mu_k = \lambda_k + 1 = k + 3/2, \quad k \in \ZZ.
\]

As we explain below, the indicial root $-1/2$ (and corresponding indicial weight $+1/2$) play a particularly special role in the analysis of
this operator. 

\subsection{The normal operator}
The indicial roots of $\Dirac$ depend on only the simplest leading order behavior of this operator as $r \to 0$. There is a more
refined model for $\Dirac$ at $r = 0$ which carries more determinative information.  This is the so-called normal operator $N(\Dirac)$. 
There is an invariant definition of this normal operator which proceeds by taking a limit of pullbacks by dilation in the $(r,y)$
variables around any given point $(0, y_0)$. However, it can also be obtained by taking the leading order terms in the 
Taylor expansions of the coefficients of the vector fields $r\del_r$, $r\del_y$ and $\del_\theta$ in the local coordinate 
expression for $r \Dirac$.  It should be fairly obvious that this is just what we have been calling the flat model above, namely
\begin{equation}
N(\Dirac) =  \begin{pmatrix} 0 &  i e^{-i\theta} \\ i e^{i\theta} & 0 \end{pmatrix} \del_s + 
\begin{pmatrix} 0 &  e^{-i\theta} \\ - e^{i\theta} & 0 \end{pmatrix} \frac{1}{s} \del_\theta + 
\begin{pmatrix} -i  & 0 \\ 0 & i\end{pmatrix} \, \del_w. 
\label{no}
\end{equation}
Note that we have changed the names of the variables $r$ to $s$ and $y$ to $w$ to
emphasize that $N(\Dirac)$ acts as an operator on the entire space $\RR^+_s \times S^1_\theta \times \RR_w$ and not
just in a local neighborhood near $\{(0, \theta, y_0)\}$.

The normal operator $N(\Dirac)$ is a `boundary symbol': its global mapping properties 
determine the mapping (and other regularity) properties of $\Dirac$ itself.  Note first that 
\begin{equation}
N(\Dirac): s^\mu H^1_e( \RR^+ \times \RR \times S^1) \to s^{\mu} L^2(\RR^+\times \RR \times S^1)
\label{normalmapping}
\end{equation}
for any $\mu$. The last term involving $\del_w$ does not have a decaying coefficient, so $N(\Dirac)$ does not map into $s^{\mu-1}L^2$. 
Notice that here both limiting behaviors as $s\to 0$ and $s\to +\infty$ are important.

We claim that \eqref{normalmapping} is semi-Fredholm whenever $\mu \not\in \ZZ + \tfrac12$; in other words, so long as $\mu$ 
is nonindicial, this mapping has closed range and either finite dimensional nullspace or finite dimensional cokernel.  In fact,
since $N(\Dirac)$ is translation invariant in $w$ and has homogeneity $-1$ with respect to dilations in $(s,w)$, 
if $u$ is a nontrivial element of the nullspace for a given $\mu$, then its translates and dilates span an infinite dimensional 
space and are all in the nullspace.  This means that even a $1$-dimensional nullspace in $s^\mu L^2$ implies that the nullspace
is infinite dimensional. There is a similar statement for the cokernel.  Now, it follows from calculations below that if
$u \in s^\mu L^2$ lies in the nullspace of $N(\Dirac)$ for some $\mu > 1/2$, then $u \equiv 0$. We then deduce from 
the formal symmetry of $N(\Dirac)$ on $L^2$ and a duality argument that $N(\Dirac)$ is surjective when $\mu < 1/2$ (and 
$\mu$ is nonindicial).

To prove this injectivity statement, we use the same symmetries to reduce $N(\Dirac)$ to a simpler operator.  Indeed, taking
Fourier transform in $w$ gives
\[
\widehat{N(\Dirac)} =  \begin{pmatrix} 0 &  i e^{-i\theta} \\ i e^{i\theta} & 0 \end{pmatrix} \del_s + 
\begin{pmatrix} 0 &  e^{-i\theta} \\ - e^{i\theta} & 0 \end{pmatrix} \frac{1}{s} \del_\theta  + i \cl (\eta) \otimes \begin{pmatrix} -i  & 0 \\ 0 & i\end{pmatrix} ;
\]
this depends parametrically on the Fourier dual variable $\eta$.  Setting $t  = s |\eta|$, 
$\heta = \eta/|\eta| \in \{\pm 1\}$, this reduces further to $|\eta|^{-1}B(\Dirac)$, where
\[
B(\Dirac) =  
\begin{pmatrix} 0 &  i e^{-i\theta} \\ i e^{i\theta} & 0 \end{pmatrix} \del_t + 
\begin{pmatrix} 0 &  e^{-i\theta} \\ - e^{i\theta} & 0 \end{pmatrix} \frac{1}{t} \del_\theta + \cl(\heta) \otimes \begin{pmatrix} 1  & 0 \\ 0 & -1\end{pmatrix}.
\]
This is called the {\em model Bessel} operator for $\Dirac$.

The Fourier transform and rescaling above are reversible operations, so the assertion about the mapping properties
of $N(\Dirac)$ can be deduced immediately from the following result.
\begin{proposition}\label{BDmapprop}
The model Bessel operator 
\begin{equation}
B(\Dirac): t^{\mu} H^1_b(\RR^+ \times S^1; t dt d\theta) \longrightarrow t^{\mu} L^2(\RR^+ \times S^1; t dt d\theta)
\label{Bmap}
\end{equation}
is Fredholm if and only if $\mu \not\in \ZZ + \frac12$.  It is injective, with nontrivial cokernel, if and only if $\mu > 1/2$, and 
surjective with nontrivial nullspace,  if and only if $\mu < 1/2$ (and $\mu \not\in \ZZ + \frac12$). 
\end{proposition}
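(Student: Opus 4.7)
The plan is to exploit the high degree of symmetry of $B(\Dirac)$ by Fourier-decomposing in the angular variable $\theta$, thereby reducing the problem to a family of coupled first-order ODE systems indexed by the half-integer modes $p\in\ZZ+\tfrac12$ forced by the antiperiodicity $u(t,\theta+2\pi)=-u(t,\theta)$. Writing $u=\alpha(t)e^{ip\theta}\,e_1+\beta(t)e^{i(p+1)\theta}\,e_2$, where the shift by one is dictated by the off-diagonal matrices $A=\bigl(\begin{smallmatrix}0 & ie^{-i\theta}\\ ie^{i\theta} & 0\end{smallmatrix}\bigr)$ and $B_1=\bigl(\begin{smallmatrix}0 & e^{-i\theta}\\ -e^{i\theta} & 0\end{smallmatrix}\bigr)$, the equation $B(\Dirac)u=0$ becomes
\[
\alpha'-\tfrac{p}{t}\alpha \;=\; -i\heta\,\beta,\qquad \beta'+\tfrac{p+1}{t}\beta \;=\; i\heta\,\alpha.
\]
Eliminating $\beta$ yields $t^2\alpha''+t\alpha'-(p^2+t^2)\alpha=0$, the modified Bessel equation of order $|p|$, and the Bessel recurrence $K_\nu'+\tfrac{\nu}{t}K_\nu=-K_{\nu-1}$ then forces $\beta$ to be a multiple of $K_{|p+1|}$ along the decaying branch.

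Next I would read off the $L^2$-mapping behavior from the classical asymptotics $I_\nu(t)\sim c_\nu t^\nu$ and $K_\nu(t)\sim c_\nu' t^{-\nu}$ near $0$, together with $K_\nu\sim e^{-t}/\sqrt t$ and $I_\nu\sim e^t/\sqrt t$ at infinity. Exponential decay at $\infty$ eliminates every $I_\nu$ contribution, and since $t^{-\nu}\in t^\mu L^2(t\,dt)$ precisely when $\mu<1-\nu$, a $K$-type solution in mode $p$ belongs to $t^\mu L^2$ iff $\mu<1-\max(|p|,|p+1|)=\tfrac12-|p+\tfrac12|$. The maximum of the right-hand side over $p\in\ZZ+\tfrac12$ is $\tfrac12$, attained uniquely at $p=-\tfrac12$ (the mode for which both $|p|$ and $|p+1|$ equal $\tfrac12$), giving the explicit one-dimensional nullspace contribution
\[
u_{-1/2} \;=\; K_{1/2}(t)\begin{pmatrix}e^{-i\theta/2}\\ -i\heta\,e^{i\theta/2}\end{pmatrix}.
\]
Thus $\ker B(\Dirac)\neq 0$ in $t^\mu L^2$ exactly on $\mu<\tfrac12$, and as $\mu$ crosses each successive indicial weight $k+\tfrac12$ with $k\le 0$ downwards, the pair $p=k+\tfrac12$, $p=-k-\tfrac32$ contributes two further nullspace vectors.

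The surjectivity/cokernel claim then follows by combining this analysis with the formal self-adjointness of $B(\Dirac)$ on $L^2(t\,dt\,d\theta)$: a direct integration by parts shows that the contributions $\pm A/t$ produced respectively by the $A\partial_t$ and $(B_1/t)\partial_\theta$ terms cancel (using that $A^{*}=-A$, $B_1^{*}=-B_1$, and $\partial_\theta B_1^{*}=A$), while the third term is Hermitian. Under the duality $(t^\mu L^2)^{*}=t^{-\mu}L^2$, and after a careful accounting of the boundary terms at $t=0$ that vanish against elements of the weighted domain $t^\mu H^1_b$, the cokernel on $t^\mu L^2$ is identified with a kernel on the adjoint weighted space and is nontrivial precisely when $\mu>\tfrac12$. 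Failure of Fredholmness at the indicial weights $\mu=k+\tfrac12$ comes from the existence of formal $L^2$-boundary solutions obstructing closure of the range. The hard part will be this last step: isolating the boundary-term analysis that produces the threshold $\tfrac12$ rather than the threshold $-\tfrac12$ suggested by a naive formal duality, which is where the detailed characterization of the weighted edge/b-Sobolev domains from \cite{Mazzeo91_EllTheoryOfDiffEdgeOp} is invoked.
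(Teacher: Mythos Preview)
Your approach is essentially the paper's: Fourier-decompose in $\theta$, reduce each half-integer mode to a modified Bessel ODE, read off the $t^\mu L^2$ nullspace from the small-$t$ asymptotics of $K_\nu$ (discarding the $I_\nu$ branch by exponential growth at infinity), and then invoke formal self-adjointness plus duality for the cokernel. The paper carries out exactly these steps, with the same explicit $p=-\tfrac12$ solution, and also uses a Weyl-sequence argument for the failure of closed range at indicial weights.

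One point worth tightening: your duality step takes an unnecessary detour. You pair via $(t^\mu L^2)^* = t^{-\mu}L^2$, notice that this naively gives threshold $-\tfrac12$ rather than $\tfrac12$, and then appeal to unspecified ``boundary-term analysis'' from \cite{Mazzeo91_EllTheoryOfDiffEdgeOp} to repair it. The paper's route is cleaner and avoids this: near $t=0$ the operator is $\tfrac1t$ times a first-order $b$-operator, so the natural mapping there is $t^\mu \to t^{\mu-1}L^2$, whose adjoint under the unweighted $L^2$ pairing is $t^{1-\mu}\to t^{-\mu}L^2$. The symmetry $\mu \leftrightarrow 1-\mu$ then places the pivot at $\mu=\tfrac12$ directly. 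It is the $1/t$ singularity in the coefficients, not any boundary contribution, that produces the shift; once you set up the weights this way the threshold falls out without further work.
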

\begin{remark}
The space $H^1_b$ consists of those functions or sections $v \in L^2$ such that $(r\del_r)v$ and $\del_\theta v$ are also in $L^2$;
the space $t^\mu H^1_b$ consists of functions $v = t^\mu w$ where  $w \in H^1_b$.
\label{defH1b}
\end{remark}
We indicate a proof in the next subsection. 

\subsection{The model Bessel operator $B(\Dirac)$}
We begin by showing the relevance of the condition that $\mu$ is nonindicial. 
\begin{proposition}
If $\mu = \mu_k = k + \tfrac32$ for some $k$, then $B(\Dirac): t^\mu H^1_b \to t^{\mu} L^2$ does not have closed range. 
 \end{proposition}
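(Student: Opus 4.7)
My plan is to prove failure of closed range at the indicial weight $\mu = \mu_k = k + 3/2$ by constructing a Weyl-type sequence: a bounded sequence $\{u_n / \|u_n\|_{t^\mu H^1_b}\}$ with no strongly convergent subsequence whose image under $B(\Dirac)$ tends to zero (in the target norm). Such a sequence is incompatible with any a priori estimate $\|u\|_{t^\mu H^1_b} \lesssim \|B(\Dirac)u\| + \|Ku\|$ modulo compact $K$, which would be required for closed range. The underlying mechanism is that $\mu_k$ is the borderline integrability weight for the indicial power $t^{\lambda_k}$, with $\lambda_k := \mu_k - 1 = k + 1/2$: at this weight, $t^{\lambda_k}\phi_k(\theta)$ fails to lie in $t^\mu L^2(t\,dt\,d\theta)$ near $t = 0$ only by a logarithm, and a log-scale cutoff amplifies this borderline behaviour into an unbounded sequence.

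The first step is to build a smooth $V(t,\theta)$ on $\RR^+\times S^1$ with $V \sim t^{\lambda_k}\phi_k$ at $t = 0$ and $B(\Dirac)V = O(t^N)$ for every $N$, where $\phi_k$ is the indicial eigenfunction. Writing $D_0 := B(\Dirac) - M$ for the Dirac part, a direct check gives $\{D_0,M\} = 0$ and $M^2 = I$, so $B(\Dirac)^2 = D_0^2 + I$; on the angular mode carrying $\lambda_k$ the null equation $B(\Dirac)u = 0$ then reduces to a modified Bessel equation, and one may take $V$ to be (a smooth extension of) the exact $I_m$-type solution with $m = k + 1/2$, exponentially growing at infinity but with the desired indicial behaviour at $0$. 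Equivalently, one can produce $V$ as a Borel-summed Taylor series $\sum_{\ell\ge 0}t^{\lambda_k + \ell}\phi_k^{(\ell)}$, iteratively cancelling the error introduced by the mass term; despite the indicial resonances at each shifted value $\lambda_k + \ell$, the Fourier-mode structure of the indicial operator makes the relevant compatibility conditions automatic, and no log terms appear.

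Set $u_n(t,\theta) := \eta_n(t)V(t,\theta)$, where $\eta_n(t) := \chi(\log t/n)$ is a log-scale cutoff with $\chi$ smooth, $\chi|_{[-1,-\delta]}\equiv 1$, and $\mathrm{supp}(\chi) \subset [-1-\delta, 0]$. In the log coordinate $\tau = \log t$, the identity $\lambda_k = \mu - 1$ makes the weighted integrand $|V|^2\,t^{-2\mu + 1}\,dt$ essentially constant in $\tau$ on the bulk of the support, giving $\|u_n\|_{t^\mu L^2}^2 \sim n$ and, similarly, $\|u_n\|_{t^\mu H^1_b}^2 \sim n$. For the image, $B(\Dirac)u_n = \eta_n\,B(\Dirac)V + [B(\Dirac),\eta_n]V$: the first piece is negligible by construction of $V$; the commutator is supported on the transition region of $\eta_n$, with integrand of size $\lesssim |\eta_n'(t)|\,|V| \sim (1/(nt))\,t^{\lambda_k}$, whose weighted integral over the transition region is controlled (in fact vanishing as $n\to\infty$) once the log-scale balancing is done. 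Thus $\|u_n\|_{t^\mu H^1_b} \to \infty$ while $\|B(\Dirac)u_n\|$ stays bounded, and weak convergence of $u_n / \|u_n\|$ to $0$ follows from the concentration of mass at $t \to 0$.

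The main obstacle is the simultaneous handling of the mass term and the commutator estimate: the naive leading-order indicial solution $t^{\lambda_k}\phi_k$ produces a residual of the same size as itself under $B(\Dirac)$ because of the mass, so one must either use the exact Bessel solution or verify that the Taylor-series construction actually terminates the mass-induced error. Either route relies essentially on the algebraic identity $\{D_0,M\} = 0$, which makes the operator structure special enough for this approach to succeed, and confirms that the argument pivots precisely on being at an indicial weight: for any $\mu \notin \ZZ + \tfrac12$ the analogous integrand is exponentially decaying rather than flat in $\tau$, so the same log-scale sequence has bounded norm and the Fredholm statement of Proposition~\ref{BDmapprop} is recovered.
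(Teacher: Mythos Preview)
Your approach and the paper's both aim at a Weyl sequence built from cutoffs of the indicial-power solution, but you take a more careful route. The paper simply applies a fixed log-width dyadic cutoff $\tilde\chi(\log t+\ell)$ to the bare monomial $t^{k+1/2}e^{i(k+1/2)\theta}$ and asserts $\|B(\Dirac)\psi_\ell\|\to 0$. You instead use a \emph{growing} log-scale cutoff $\chi(\log t/n)$, so that the domain norm grows like $\sqrt n$ while the commutator picks up an extra factor $1/n$, and you replace the monomial by the exact $I_m$-Bessel solution so that the mass term $M$ in $B(\Dirac)$ leaves no residual. Both refinements are genuinely warranted for $B(\Dirac)$: the mass term does not annihilate the bare monomial, so $\|M\psi_\ell\|\sim\|\psi_\ell\|$ in the paper's construction (the paper's accompanying Remark that the same argument covers $N(\Dirac)$ and $\Dirac$ is really the point, since those have no zeroth-order mass); and with a fixed-width cutoff the commutator $[D_0,\chi_\ell]$ contributes a term of the same size as $\psi_\ell$ in any reasonable target norm. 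Your Bessel correction is in fact optional once the growing cutoff is in place---the mass contribution is then merely bounded while the domain norm diverges---but it does make the estimate cleaner. One point to tighten: your commutator bound (and indeed boundedness of the map itself near $t=0$) goes through with target weight $t^{\mu-1}L^2$ rather than the $t^\mu L^2$ literally stated in the proposition; you should flag this, since with weight $t^\mu$ the extra $t^{-2}$ from $|\eta_n'|^2$ makes the transition-region integral grow like $e^{cn}/n^2$ rather than decay.
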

\begin{remark}
The proof below applies equally well to $N(\Dirac)$ and $\Dirac$.
\end{remark}
\begin{proof}
If $\mu = \mu_k$, consider a sequence of sections $\psi_\ell(t,\theta) = \chi_\ell(t)  t^{k+1/2} e^{i(k+1/2)\theta}$, where
$\chi_\ell(t)$ is a smooth cutoff function with support in $[2^{-\ell-1}, 2^{-\ell+1}]$, $\ell \in \NN$.  For example,
we can take $\chi_\ell(t) = \tilde{\chi}( \log t + \ell)$, where $\tilde{\chi}(\tau) \in \calC^\infty$ has support in
$|\tau| \leq 1$ and equals $1$ for $|\tau| \leq 1/2$. It is straightforward to check that this is a Weyl sequence for 
$B(\Dirac)$, i.e., $|| \psi_\ell||_{t^{\mu_k} L^2} \cong 1$ while $||\Dirac \psi_\ell||_{t^{\mu_k} L^2} \to 0$. Hence
the range is not closed.
\end{proof}

Now recall how duality works in this setting.   Using the standard Riemannian volume measure, $\int (\Dirac u) v = \int u (\Dirac v)$,
and similarly, with respect to $t dt d\theta$, $\int B(\Dirac) u v = \int u B(\Dirac) v$.  Putting aside the issue of regularity for the moment, 
the dual of the mapping $B(\Dirac): t^{\mu} L^2 \to t^{\mu-1} L^2$ with respect to the (unweighted!) $L^2$ pairing is 
$B(\Dirac): t^{1-\mu} L^2 \to t^{-\mu} L^2$.  This duality is symmetric about $\mu = 1/2$, the indicial weight associated to the 
indicial root $-1/2$. Thus if $\calK(\mu)$ and $\calC(\mu)$ denote the dimensions of the kernel and cokernel of \eqref{Bmap}, 
and $\iind(\mu)$ its index, then 
\[
\dim \calK(\mu) = \dim \calC(1-\mu), \qquad  \iind(1-\mu) = - \iind(\mu).
\]
This same duality holds for $N(\Dirac)$ and for $\Dirac$ itself, but unlike for $B(\Dirac)$, the indices of these operators are $\pm \infty$. 

We now determine $\calK(\mu)$. Writing out the system $B(\Dirac) \psi = 0$ gives
\[
\begin{split}
(t\del_t + i \del_\theta) \psi_1  & = -\, i \, \heta \, t \, e^{-i\theta} \psi_2 \\
(t\del_t - i \del_\theta) \psi_2 & = \, i \, \heta \, t\, e^{i\theta} \psi_1;
\end{split}
\]
this is more tractable if we set $\tilde{\psi}_1 = e^{i\theta} \psi_1$, which yields
\[
\begin{split}
(t\del_t + i\del_\theta + 1) \tilde{\psi}_1 & = - t \, i \heta \psi_2 \\
(t\del_t - i\del_\theta) \psi_2 & =  t \, i \heta \tilde{\psi}_1. 
\end{split}
\]

Eliminating first $\psi_2$ and then $\tilde{\psi}_1$ produces the two uncoupled equations
\[
\begin{split}
  \left( (t\del_t )^2   - (i\del_\theta+1)^2  - t^2\right) \tilde{\psi}_1 & = 0 \\
  \left( (t\del_t)^2 + \del_\theta^2 - t^2\right) \psi_2  & = 0.
  \end{split}
\]
Writing $\tilde{\psi}_1 = A(t)  e^{i(k+\tfrac12)\theta}$ and $\psi_2 = B(t) e^{i(k + \tfrac12)}$, we see that 
\[
  t^2 A'' + t A' - ( \nu_{k-1}^2 + t^2) A = 0, \quad t^2 B'' + t B' - (\nu_k^2 + t^2) B = 0,
\]
where
\[
\nu_k = |k + \tfrac12|,\ \ k \in \ZZ \ \ \  \mbox{(so $\nu_{-1} = \nu_0 = \tfrac12$, $\nu_{-2} = \nu_1 = \tfrac32$, etc.).}
\]
The solutions are Bessel functions of imaginary order: 
\[
\begin{split}
 A(t) =c_1  I_{\nu_{k-1}}(t) +  c_2 K_{\nu_{k-1}}(t), \ B(t) = c_1'  I_{\nu_k} (t) +  c_2' K_{\nu_k}(t),\ \ k \in \ZZ.
\end{split}
\]
Referring to \cite{Lebedev}, up to multiplicative constants, since $\nu > 0$ and $\nu \not\in \ZZ$, 
  \[
I_\nu(t) \sim t^\nu\ \mbox{as}\ t \searrow 0, \ \ \ I_\nu(t) \sim t^{-1/2} e^t\ \ t \nearrow \infty,
\]
while
\[
K_\nu(t) \sim t^{-\nu}\ \mbox{as}\ t \searrow 0, \ \ \ K_\nu(t) \sim t^{-1/2} e^{-t}\ \ t \nearrow \infty.
\]

After a further calculation, we obtain finally that solutions to the original coupled system $\Dirac \psi = 0$ are necessarily
superpositions of solutions of the form
\[
a_k \begin{pmatrix}  I_{\nu_{k-1}}(t) e^{i(k-\tfrac12)\theta}  \\ c_k(\heta)  I_{\nu_k}(t) e^{i(k + \tfrac12)\theta} \end{pmatrix} + 
b_k  \begin{pmatrix}  K_{\nu_{k-1}}(t) e^{i(k-\tfrac12)\theta}  \\ d_k(\heta) K_{\nu_k}(t) e^{i(k + \tfrac12)\theta} \end{pmatrix},
\]
for some coefficients $a_k, b_k$ and where $c_k(\heta)$ and $d_k(\heta)$ are determined by the equation.   To
work in the realm of tempered distributions, we discard solutions growing exponentially in $t$, so we must set $a_k = 0$ 
and then may as well take $b_k = 1$. We may then also calculate the values $d_k(\heta)$.  This information is needed 
only for $k = 0$, and, since $K_{1/2}(t)$ is a constant multiple of $t^{-1/2} e^{-t}$,  we arrive at the particular solution
\begin{equation}
\begin{pmatrix}  e^{-i\theta/2}  \\ - i \heta \, e^{i\theta/2} \end{pmatrix} t^{-\frac12} e^{-t}.
\label{psoln}
\end{equation}
We have singled out this one solution because we shall work in $r^{\mu} L^2$, $-1/2 < \mu < 1/2$, and this excludes all solutions 
except those with $\nu = \pm \tfrac12$. This leaves only $k=0$, and the specific solution \eqref{psoln}.

The duality considerations and calculations above now yield the following result. 
\begin{proposition}
If $-1/2 < \mu < 1/2$, then $B(\Dirac): t^\mu H^1_b \to t^{\mu} L^2$ is surjective, with one dimensional nullspace
spanned by the explicit solution \eqref{psoln}.  If $1/2 < \mu < 3/2$, then $B(\Dirac): t^\mu L^2 \to t^{\mu} L^2$ is 
injective, with $1$-dimensional cokernel.   There is no value of the weight parameter for which $B(\Dirac)$ is an isomorphism.
\end{proposition}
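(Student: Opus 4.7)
The plan is to combine the Bessel-function classification obtained in the preceding calculation with the duality relation $\dim \calK(\mu) = \dim \calC(1-\mu)$ and the semi-Fredholm property at nonindicial weights. Translation invariance in $\theta$ and standard elliptic regularity imply that any solution of $B(\Dirac)\psi = 0$ in $t^{\mu} H^1_b$ admits an orthogonal decomposition into antiperiodic Fourier modes $e^{i(k+1/2)\theta}$, within each of which the two components satisfy modified Bessel equations of orders $\nu_{k-1}$ and $\nu_k$. Since $I_{\nu}(t) \sim t^{-1/2} e^t$ at infinity, the $I_\nu$-components cannot lie in any polynomially weighted $L^2$ space and must be discarded; the remaining $K_\nu$-components lie in $t^{\mu} L^2(t\,dt)$ near $t = 0$ precisely when $\nu < 1-\mu$.

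For $-1/2 < \mu < 1/2$, equivalently $1-\mu \in (1/2, 3/2)$, the constraint $\nu_{k-1}, \nu_k < 1-\mu$ together with $\nu_j = |j + 1/2| \geq 1/2$ forces both orders to attain the minimum value $1/2$. Since $\nu_j = 1/2$ only when $j \in \{-1, 0\}$, the unique admissible mode is $k = 0$, producing precisely the solution \eqref{psoln}, which one verifies directly to lie in $t^{\mu} H^1_b$ throughout this range. For $1/2 < \mu < 3/2$ the condition $\nu < 1-\mu < 1/2$ excludes every mode, so the nullspace is trivial. The cokernel statements now follow from duality: for $\mu \in (-1/2, 1/2)$ the dual weight $1-\mu$ supports no kernel, giving surjectivity; for $\mu \in (1/2, 3/2)$ the dual weight carries a one-dimensional kernel, giving a one-dimensional cokernel and hence injectivity combined with the Fredholm property.

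For the final assertion, observe that $\iind(\mu) = +1$ on $(-1/2, 1/2)$ and $\iind(\mu) = -1$ on $(1/2, 3/2)$. Each indicial root $\lambda_m = m + 1/2$ has multiplicity two, arising from the two linearly independent Ansatzes $(1,0)^\top e^{i(m+1/2)\theta} r^{m+1/2}$ (from the $k = m$ branch) and $(0,1)^\top e^{-i(m+1/2)\theta} r^{m+1/2}$ (from the $k = -m-1$ branch) in the preceding indicial root computation, so the relative index theorem forces $\iind(\mu)$ to jump by an even integer across each indicial weight; the index therefore remains a nonzero odd integer in every strip between consecutive indicial weights, and at the indicial weights themselves the range fails to be closed by the Weyl-sequence argument already given. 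The main subtlety I anticipate is the rigorous justification of the Fourier-mode decomposition for elements of $t^\mu H^1_b$: one must verify that each angular projection defines an element of the same weighted space whose components distributionally satisfy the uncoupled Bessel equations, ensuring that the explicit classification exhausts the entire nullspace rather than merely its smooth part.
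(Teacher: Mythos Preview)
Your argument is correct and follows the paper's approach exactly: the explicit Bessel classification combined with the duality $\dim\calK(\mu)=\dim\calC(1-\mu)$ is precisely what the paper invokes when it says ``the duality considerations and calculations above now yield the following result.'' Your treatment is in fact more explicit than the paper's on every point.

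One minor remark on the final assertion: your parity argument via the relative index theorem is valid, but heavier than necessary. A simpler route, closer in spirit to what the paper has in hand, is monotonicity: the explicit solution \eqref{psoln} lies in $t^\mu H^1_b$ for \emph{every} $\mu<1/2$ (the integrability condition $-1/2>\mu-1$ holds throughout), so $\dim\calK(\mu)\geq 1$ for all $\mu<1/2$; dually $\dim\calC(\mu)\geq 1$ for all $\mu>1/2$; and at indicial weights the range fails to be closed. This avoids having to compute the multiplicity of each indicial root.
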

\begin{remark}
The last assertion shows that neither $N(\Dirac)$ nor $\Dirac$ are Fredholm on any weighted $L^2$ space. Indeed, a central
result in \cite{Mazzeo91_EllTheoryOfDiffEdgeOp} states that $\Dirac: r^\mu H^1_e \to r^{\mu-1}L^2$ is Fredholm if and only if
$B(\Dirac): t^\mu H^1_b \to t^{\mu}L^2$ is an isomorphism, and we have just explained that this can never be the case. 
The difference in the weight of the range spaces for $B(\Dirac)$ and $\Dirac$ is because, for $B(\Dirac)$, the term of
order $0$ which involves $\hat{\eta}$ does not decay; in $\Dirac$, this term corresponds to a tangential derivative,
and its effect on the weight is taken into account in the definition of the edge Sobolev spaces. 
\end{remark}

\subsection{Mapping properties of $\Dirac$} 
Passing from $\Dirac$ to its normal operator and then its model Bessel operator is analogous to considering
the principal symbol of a differential operator. The (left or right) inversion of the Bessel operator can be
parlayed, by rescaling and inverse Fourier transform, to the similar left or right inversion of the normal operator.
The model pseudoinverse obtained this way can then be successively corrected to a left or right parametrix
for $\Dirac$ itself.   Note that we have been careful to state this in terms of one-sided (approximate) inverses, which is 
all that is possible for this problem.  The steps where this successive correction take place are the analogues,
in the pseudodifferential edge calculus, of the standard elliptic parametrix construction for nondegenerate
elliptic operators, with the important proviso that the construction is done at the level of the Schwartz kernel
rather than the symbol.  This is described carefully elsewhere, but we shall give a brief review of how this
all goes at the end of this section. 

However, let us describe the result of this whole process. Fix any $\mu \not\in \ZZ + \frac12$.  If $\mu > 1/2$, 
then there exists a left parametrix $G$. This is a pseudodifferential operator of order $-1$ with a particular 
singular structure near $\Si$ which satisfies
\[
G \circ \Dirac = I - R_1,
\]
where $R_1$ is a {\it residual} operator. This remainder term maps $r^\mu H^s_e$ into $r^{\mu'}H^{s'}_e$
for some $\mu' > \mu$ and for all $s' \in \RR$. (Later we shall state its regularity gain even more accurately.)
Since $r^{\mu'}H^{s'}_e \hookrightarrow r^{\mu}H^{s}_e$ is compact, we conclude that $\Dirac$ has
at most a finite dimensional nullspace in $r^\mu H^s_e$ for any $s$.  This parametrix can also be used
to prove that the range of $\Dirac $ is closed. Indeed, if $\Dirac u_j = f_j$ and $f_j$ converges
in $r^{\mu-1}L^2$, then using that $u_j = G f_j + R_1 u_j$, and adjusting $u_j$ by an element of the nullspace
so that $u_j$ is orthogonal to this nullspace, we conclude that $u_j$ converges in $r^\mu H^1_e$. 

Similarly, if $\mu < 1/2$, $\mu \not\in \ZZ + \frac12$, then there exists an edge pseudodifferential
operator $G'$ of order $-1$ such that $\Dirac \circ G' = I - R_1'$, where $R_1'$ is residual in the
same sense as above. This equality of operators implies that the range of $\Dirac$ is closed and
of finite codimension.

The operators $G$ and $G'$ depend on the choice of weight parameter, of course. It turns out that one
can choose these operators to be constant in each interval $(k -1/2, k+1/2)$, $k \in \ZZ$.  The rough
behavior is that as $\mu$ increases, the nullspace of $\Dirac$ is smaller and smaller, and eventually
becomes trivial for large enough $\mu$, but the cokernel jumps up (but remains infinite dimensional)
as $\mu$ crosses each indicial weight.  Similarly, as $\mu$ decreases, $\Dirac$ eventually becomes
surjective, but its nullspace similarly jumps up (and is infinite dimensional) as $\mu$ crosses the
negative values in $\ZZ + \frac12$. 

We shall henceforth primarily be interested in the two cases where $\mu \in (-1/2, 1/2)$, so that
$\Dirac$ is surjective modulo a finite dimensional error, and $\mu \in (1/2, 3/2)$, where $\Dirac$ 
is injective modulo a finite dimensional error.  In particular, we may as well study just the
two cases $\mu = 0$ and $\mu = 1$.

\begin{proposition}
The space of $\Z_2$-harmonic spinors can also be written as
\[
Z_1 = \{ u \in r L^2(M_\Si: \slS \otimes \calI)|  \Dirac u = 0 \}.
\]
\label{defZ}
\end{proposition}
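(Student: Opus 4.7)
The claim is a double inclusion, so the plan is to establish each direction separately. One direction is purely soft; the other is genuine elliptic regularity, and it is there that I expect to have to invoke the edge parametrix construction.

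The easy direction is $Z_1 \subset \{u\in rL^2 \mid \Dirac u = 0\}$. By Lemma~\ref{rH=L21}, for a smooth closed curve $\Si$ one has $H^1(M_\Si;\slS\otimes\calI) = rH^1_e(M_\Si;\slS\otimes\calI)$. Since $H^1_e \hookrightarrow H^0_e = L^2$ trivially (the edge Sobolev spaces are defined by successive boundedness of edge vector fields starting from $L^2$), the inclusion $rH^1_e \hookrightarrow rL^2$ is immediate, and hence any $u\in Z_1 \subset H^1$ automatically lies in $rL^2$.

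The harder direction, that $u \in rL^2$ with $\Dirac u = 0$ forces $u\in H^1$, is a weighted elliptic regularity statement. I would deduce it from the left parametrix discussion in the preceding subsection. Observe that the target weight is $\mu = 1$, which lies in the interval $(1/2,\,3/2)$ and is not of the form $k+1/2$, so it is nonindicial and strictly above the critical weight $1/2$. Therefore the edge calculus provides a left parametrix $G$ of order $-1$ with
\[
G\circ\Dirac = I - R_1,
\]
where $R_1$ is a residual operator mapping $r^\mu H^s_e \to r^{\mu'} H^{s'}_e$ for some $\mu' > \mu$ and every $s' \in \RR$. Applying this identity to $u$, which lies in $rL^2 = rH^0_e$, and using $\Dirac u = 0$, gives $u = R_1 u$. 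Hence $u$ belongs to $r^{\mu'} H^{s'}_e$ for some $\mu' > 1$ and all $s'$; in particular $u \in rH^1_e = H^1$, as required.

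The main obstacle is ensuring that the parametrix really delivers the advertised gain in both weight and Sobolev order at the weight $\mu = 1$. Concretely, one must (i) verify that $\mu = 1$ is nonindicial (immediate from the indicial root calculation $\lambda_k = k+1/2$), (ii) justify that the proper left parametrix exists on this side of the critical weight, which follows from the fact that $N(\Dirac)$ is injective on $s^\mu H^1_e$ for $\mu > 1/2$ by Proposition~\ref{BDmapprop}, and (iii) confirm that the residual error $R_1$ is genuinely smoothing in both weight and regularity, not merely compact. All three ingredients are standard outputs of the edge calculus parametrix construction outlined at the end of this section, and invoking them in order closes the argument.
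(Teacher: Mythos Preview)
Your proof is correct and follows essentially the same route as the paper: the easy inclusion via $H^1 = rH^1_e \subset rL^2$ (Lemma~\ref{rH=L21}), and the regularity direction via the left parametrix identity $G\circ\Dirac = I - R_1$ at the nonindicial weight $\mu = 1$, which bootstraps any $u\in rL^2$ with $\Dirac u = 0$ into $rH^\ell_e$ for all $\ell$. Your write-up is in fact more explicit than the paper's, which compresses the argument into a single sentence.
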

\begin{proof}
A simple argument using the parametrix  for $\Dirac$ on $r H^1_e$ can be used to show that if $u \in Z_1$, then $u \in r H^1_e$,
and in fact $u \in r H^\ell_e$ for any $\ell \geq 0$. Now, by using Lemma \ref{rH=L21}, we obtain this result.
\end{proof}
Hence, by the results above, $Z_1$ is finite dimensional, while $Z_0$ is infinite dimensional.  

\begin{theorem}\label{Dmapprop}
The mapping
\[
\Dirac:  r H^1_e( M_\Si; \slS  \otimes \calI) \longrightarrow L^2( M_\Si; \slS \otimes \calI)
\]
has finite dimensional nullspace $Z_1$ and closed range which can be identified with the orthogonal
complement of $Z_0$.  On the other hand, the nullspace of the mapping
\[
\Dirac:   H^1_e( M_\Si; \slS  \otimes \calI) \longrightarrow r^{-1} L^2( M_\Si; \slS \otimes \calI)
\]
is $Z_0$, and its range is a closed subspace of finite codimension, characterized as the annihilator of the finite
dimensional space $Z_1$ with respect to the unweighted $L^2$ pairing.
\label{mainmapD}
\end{theorem}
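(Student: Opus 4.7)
The plan is to combine the one-sided parametrix construction from the preceding subsection, elliptic regularity near $\Si$, and $L^2$-duality. Since the weight ranges $(1/2, 3/2)$ and $(-1/2, 1/2)$ admit only left- and right-parametrices respectively, the two mappings must be treated separately and then matched through the formal self-adjointness of $\Dirac$, which couples the $L^2$ pairing on the two sides.

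For $\Dirac\colon rH^1_e \to L^2$, the weight $\mu=1$ lies in $(1/2,3/2)$, so the edge calculus provides a left parametrix $G$ with $G\Dirac = I - R_1$ and $R_1$ compact on $rH^1_e$. Finite dimensionality of the nullspace is immediate, and by Proposition \ref{defZ} this nullspace equals $Z_1$. Closed range follows in the standard fashion: if $\Dirac u_j = f_j$ with $f_j \to f$ in $L^2$ and $u_j \perp Z_1$ normalized, then $u_j = Gf_j + R_1 u_j$ and compactness of $R_1$ yields convergence of a subsequence.

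For $\Dirac\colon H^1_e \to r^{-1}L^2$, the weight $\mu=0$ lies in $(-1/2,1/2)$, so there is a right parametrix $G'$ with $\Dirac G' = I - R_1'$ and $R_1'$ compact on $r^{-1}L^2$, giving closed range of finite codimension. To identify the nullspace with $Z_0$, one must verify that every $u \in L^2$ with $\Dirac u = 0$ in fact lies in $H^1_e$. The indicial roots of $\Dirac$ are $k + 1/2$, $k \in \ZZ$; the only one with indicial weight strictly between $0$ and $1$ (the $L^2$ window) is $-1/2$, and a Bessel/Fourier expansion on modes in $\theta, y$ — parallel to the computation of $\ker B(\Dirac)$ — produces an asymptotic $u \sim C(\theta,y) r^{-1/2} e^{\pm i\theta/2} + \calO(r^{1/2})$, from which $r\partial_r u$, $\partial_\theta u$, $r\partial_y u \in L^2$ is verified directly.

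The range identifications follow from formal self-adjointness and the perfect $L^2$ pairing of $rL^2$ with $r^{-1}L^2$. For the first map, $v \in L^2$ annihilates the range iff $\int v\,\overline{\Dirac u}\,dV_g = 0$ for all $u \in rH^1_e$; the extra factor of $r$ in the domain forces the boundary flux on $\{r = \epsilon\}$ to vanish as $\epsilon \to 0$, so integration by parts converts the condition into $\Dirac v = 0$ distributionally, placing $v \in Z_0$. For the second map, the annihilator inside $rL^2$ is $\{v \in rL^2 : \Dirac v = 0\}$, which coincides with $Z_1$ by Proposition \ref{defZ}. The most delicate step is justifying integration by parts in the second case, where neither $u \in H^1_e$ nor $v \in rL^2$ vanishes at $\Si$: the joint decay rate $u \cdot v = \calO(1)$ combined with the cylinder area factor $\epsilon$ kills the boundary flux, with full rigor obtained via density from smooth sections supported away from $\Si$.
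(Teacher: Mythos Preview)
Your argument is correct and follows essentially the same strategy as the paper: the semi-Fredholm properties come from the one-sided parametrices of Section~3.5, the nullspace identifications from Proposition~\ref{defZ}, and the range characterizations from formal self-adjointness and duality.

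One small difference worth pointing out: for the range characterization the paper avoids any boundary-flux analysis entirely by testing only against $u \in \calC^\infty_0(M \setminus \Si)$, which is dense in $rH^1_e$ (and in $H^1_e$); then $\langle f, \Dirac u\rangle = \langle \Dirac f, u\rangle$ holds trivially with no boundary term, and continuity of $\Dirac$ on the relevant domain spaces passes the conclusion to all $u$. This is shorter than your direct estimate of the flux on $\{r=\epsilon\}$, though your approach also works and makes the mechanism more visible. Conversely, your explicit regularity step showing that any $u \in L^2$ with $\Dirac u = 0$ actually lies in $H^1_e$ fills in something the paper's proof glosses over (the phrase ``Definition~\ref{defZ} identifies the two nullspaces'' really only covers $Z_1$), so your version is more complete on that point.
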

\begin{proof}
Proposition~\ref{defZ} identifies the two nullspaces.  Suppose $f \in L^2$ and $f \perp \Dirac u$ for all
$u \in r H^1_e$.  We may test against only those $u$ which are smooth and compactly supported, in
which case it is trivial that $0 = \langle f, \Dirac u\rangle = \langle \Dirac f, u \rangle$ for all such $u$,
so $f \in Z_0$.  Similarly, the same calculation shows that $Z_0$ is orthogonal to the range. 

The corresponding statement for the other map is proved in the same way.
\end{proof} 

It is exceptional for $Z_1$ to contain anything other than $0$, whereas $Z_0$ is always infinite dimensional for any $\Si$. 

\subsection{Asymptotics of solutions}
We next turn to the regularity of elements of $Z_1$ and $Z_0$. The starting point is a pair of results from the general 
edge operator theory,  adapted to the present situation.   Both of the results below are local, i.e., they hold for 
solutions defined only in a neighborhood $\calU$ around a point $y_0 \in \Si$, but not necessarily globally on 
$M \setminus \Si$.  (In particular, these asymptotic results hold near points in the interior of any edge when $\Si$ is a graph.)
The first states that any solution of $\Dirac u = 0$ which does not blow up as fast as the `critical' indicial root $-1/2$ 
is necessarily polyhomogeneous. The second asserts the existence of a `weak' expansion for any solution $u \in L^2$. These
solutions might actually blow up more quickly than $r^{-1/2}$, but when `averaged' in the $\Si$ direction, blow up at precisely
the rate $r^{-1/2}$. This generalizes the well-known behavior of bounded harmonic functions in a half-space, and in any case
leads to the optimal regularity statement in this setting.

In the following, we shall use the term polyhomogeity freely.  This is a regularity condition slightly more general
than smoothness, which is particularly useful for describing solutions of singular elliptic PDE.  
\begin{definition} If $M_\Si$ is a manifold with boundary, then a function $u$ is called polyhomogeneous on $M_\Si$ at the boundary if
\[
u \sim \sum_{j=0}^\infty \sum_{\ell = 0}^{K_j}  a_{j\ell}(\theta, y) r^{\gamma_j} (\log r)^\ell,
\]
where $\{\gamma_j\}$ is a discrete sequence of complex numbers with real parts converging to infinity.   The meaning
of this asymptotic expansion is the classical one, namely that the difference between $u$ and any finite
sum of this series decays faster than the next term in the series, and this asymptotic equivalence is also true if
we differentiate both sides any number of times with respect to any of these variables. In particular, the coefficients
$a_{j\ell}$ are all $\calC^\infty$. 
\end{definition}
The special case where $\gamma_j = j$ and every $K_j = 0$ corresponds to $u$ being smooth for $r \geq 0$.  
In the only cases considered here, the $\gamma_j$ lie in subset of either $\ZZ$ or else $\ZZ + \frac12$ which is bounded below. 

\begin{proposition}
Let $\calU = \calV \times B_\epsilon $ be a neighborhood in $M$ near a point $y_0 \in \Si$, where $\calV$ is an open set 
in $\Si$ and $B_\epsilon$ is a small ball in $\RR^2$, identified with the fibers of $N\Si$. If $u \in  r^\mu 
L^2( \calU \setminus \Sigma; \slS \otimes \calI)$ satisfies $\Dirac u = 0$ and $\mu > 1/2$, then $u$ is
polyhomogeneous with an asymptotic expansion
\[
u(r, \theta, y) \sim \sum_{k \geq k_0} \sum_{\ell =-k + k_0}^{k-k_0}  \begin{pmatrix} a_{k,\ell}(y) e^{-i \theta/2 } \\ b_{k,\ell} (y) 
e^{i \theta/2} \end{pmatrix} e^{i\ell \theta} \, r^{k -\frac12} (\log r)^\ell, 
\]
where every coefficient $a_{k,\ell}(y)$ and $b_{k,\ell}(y)$ lies in $\calC^\infty(\calV)$ and $k_0$ is the unique integer such that $|\mu - k_0| < 1$. 
\label{sexp}
\end{proposition}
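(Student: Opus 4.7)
The plan is to combine the edge parametrix construction of Section 6 with the indicial analysis from Section 3.2. After multiplying $u$ by a smooth cutoff supported in $\calU$, the problem reduces to studying a compactly supported $u$ near a single boundary face $F$ of $M_\Si$. The central input is a left edge parametrix $G$ for $\Dirac$ on $r^\mu H^1_e$; this is available since $\mu > 1/2$ lies above the critical indicial weight $1/2$. It satisfies
\[
G \Dirac = I - R_1,
\]
where the Schwartz kernel of $R_1$ is polyhomogeneous on the edge double space, and $R_1$ sends every section of the weighted edge Sobolev space into one that is polyhomogeneous at $F$ with index set controlled by the indicial roots of $\Dirac$. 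The construction of $G$ and the description of $R_1$ are sketched in Section 6, following \cite{Mazzeo91_EllTheoryOfDiffEdgeOp}.

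Given this input, $\Dirac u = 0$ forces $u = G \Dirac u + R_1 u = R_1 u$ modulo smoothing errors coming from the cutoff, so $u$ is polyhomogeneous at $r = 0$. The allowed exponents in the expansion are drawn from the indicial roots $\{k + \tfrac12 : k \in \ZZ\}$ computed in Section 3.2, subject to the weight constraint that $r^\lambda \in r^\mu L^2$ forces $\lambda > \mu - 1$. Combined with the hypothesis $\mu > 1/2$, this produces exactly the exponents $\lambda = k - \tfrac12$ for $k \geq k_0$, where $k_0$ is the unique integer with $|\mu - k_0| < 1$.

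To pin down the form of the coefficients, I would substitute the ansatz $u \sim r^{k - 1/2}\, \tilde u_k(\theta, y)$ into $\Dirac u = 0$ and match leading orders. At each order the equation reduces to the indicial equation for $\Dirac$ at the corresponding $\lambda = k - \tfrac12$, whose kernel was computed in Section 3.2: the antiperiodicity and two-component spinor structure force solutions of the shape
\[
\tilde u_k(\theta, y) = \begin{pmatrix} a_{k,\ell}(y)\, e^{-i\theta/2} \\ b_{k,\ell}(y)\, e^{i\theta/2} \end{pmatrix} e^{i\ell\theta},
\]
for the appropriate range of integer angular modes $\ell$, with $\calC^\infty$ coefficients in $y$. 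Higher-order terms are then determined recursively by inverting the indicial operator against inhomogeneous data produced by lower-order coefficients and by the subprincipal parts of $\Dirac$; when such an inhomogeneous term resonates with the kernel of the indicial operator at a later indicial root, a $\log r$ factor must be introduced to solve the recursion, which accounts for the $(\log r)^\ell$ powers in the stated expansion.

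The main obstacle is establishing the polyhomogeneity of $R_1$ with the correct index set at $F$. This requires the full edge calculus: one builds $G$ as an edge pseudodifferential operator whose kernel is conormal on the edge double space with explicit boundary behavior, obtained by inverting the model Bessel operator $B(\Dirac)$ of Section 3.4 fiberwise and performing Fourier inversion and rescaling, then correcting successively at the boundary faces of the double space. The composition $G \Dirac$ then produces a residual error $R_1$ whose index set at $F$ is exactly $\{k - \tfrac12 : k \geq k_0\}$ with controlled logarithmic multiplicities. Once this is in place, the propagation of polyhomogeneity to $u$ and the explicit form of the leading coefficients both follow from the indicial recursion outlined above.
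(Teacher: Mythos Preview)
Your proposal is correct and follows essentially the same route as the paper's sketch: both arguments invoke the left edge parametrix $G$ available when $\mu > 1/2$, write $G\Dirac = I - R$ with $R$ residual, and deduce $u = Ru$ is polyhomogeneous because the range of $R$ lies in $\calA_{\phg}^E$ for the index set $E$ coming from the left face of $G$. Your additional discussion of the recursive determination of coefficients and the appearance of logarithms from indicial resonance is more explicit than the paper's sketch, but it is the standard mechanism behind the stated form of the expansion and is consistent with it.
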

To interpret this expansion more easily, we note that in the `flat' case, where $M = \RR^3$ and $\Sigma = \RR$, 
the sum in $\ell$ is absent and only the monomial $e^{ik\theta}$ appears as a coefficient of $r^{k-\frac12}$.  The extra logarithmic factors are needed
to ensure that $\psi$ is a `formal' solution (i.e., a solution in the sense of series expansions) in the presence of correction
terms arising from the fact that $M$ and $\Sigma$ are curved. 

We are particularly interested in the case where $1/2 < \mu < 3/2$, so $k_0 = 1$, since this is the case where there exists a solution to $\Dirac u = 0$
with $u \sim r^{1/2}$. 

\begin{proposition}
Let $\calU = \calV \times B_\epsilon$ be a neighborhood in $M$ around a point of $\Sigma$, as above. 
Suppose that $u \in L^2(\calU \setminus \Si; \slS \otimes \calI)$ satisfies $\Dirac u = 0$. Then
\[
u(r, \theta, y) \sim \sum_{k = 0}^\infty \sum_{\ell = -k}^k \begin{pmatrix} a_{k,\ell}(y) e^{-i \theta/2} \\ b_{k,\ell}(y) e^{i\theta/2} \end{pmatrix} 
e^{i\ell \theta} r^{k -\frac12} (\log r)^\ell,
\]
where $a_{k,0}, b_{k,0} \in H^{-1/2 - k}(\calV)$ for all $k$ and $a_{k,\ell}, b_{k,\ell} \in H^{-1/2-k-\epsilon}$ for any $\epsilon > 0$ if $\ell \neq 0$. 
In particular, the `leading' coefficients $a_{0,0}(y), b_{0,0}(y)$ lie in $H^{-1/2}(\calV)$.   We refer to  \cite{Mazzeo91_EllTheoryOfDiffEdgeOp} for a detailed interpretation of this type of weak expansion.
\label{wexp}
\end{proposition}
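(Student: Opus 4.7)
The plan is to reduce to the model Bessel operator by Fourier transforming in $y$, use the explicit solution structure to extract leading asymptotics, and then inductively peel off successive terms of the expansion. The delicate point is to trace the exact Sobolev regularity of the coefficients in the edge variable, which is sharp and borderline.

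First, I would localize: multiply by a cutoff $\chi$ supported in $\calU$ and identically one on a smaller neighborhood, so $\chi u$ satisfies $\Dirac(\chi u) = [\Dirac,\chi]u$, a section supported away from $\Si$ and as smooth as desired. It therefore suffices to analyze solutions supported in a thin collar of $\Si$. Next, I would replace $\Dirac$ by its normal operator $N(\Dirac)$ modulo remainders whose coefficients vanish at $r=0$. For $N(\Dirac)$ itself, partial Fourier transform in $y$ and the rescaling $t = s|\eta|$ reduce, at each frequency $\eta$, to the model Bessel operator $B(\Dirac)$. Decomposition into the antiperiodic angular modes $e^{i(k+1/2)\theta}$ yields decoupled scalar Bessel equations, whose solutions are spanned by $I_{\nu_k}(t)$ and $K_{\nu_k}(t)$. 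Only the $K_{\nu_k}$ branch survives the $L^2$ hypothesis at infinity in $t$, since $I_{\nu_k}$ grows exponentially. Each $K_{\nu_k}$ contributes a leading power $t^{-\nu_k}$, which after inverse Fourier transform recombines into the asserted $r^{k-\frac12}$ terms with angular factors of the form $e^{\pm i\theta/2}e^{i\ell\theta}$.

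The error terms arising from the difference $\Dirac - N(\Dirac)$ are absorbed by an inductive matching of indicial roots, exactly as for Proposition \ref{sexp}: at each stage one solves an inhomogeneous indicial equation, and the logarithmic factors $(\log r)^\ell$ appear precisely when the integer differences between indicial roots produce resonances with the curvature corrections from the metric and from the embedding of $\Si$. The only difference from the polyhomogeneous case is that the successive correction terms now have finite (rather than smooth) regularity in $y$.

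The main obstacle is the sharp Sobolev regularity of the coefficients. The leading power $r^{-1/2}$ is borderline for $L^2$ with respect to the volume form $r\,dr\,d\theta\,dy$, so $a_{0,0}, b_{0,0}$ exist only as distributional traces. I would identify them via a duality pairing: test $u$ against sections obtained by applying the formal adjoint parametrix to Schwartz data on $\Si$, using that the normalization $K_{1/2}(t) = c\, t^{-1/2}e^{-t}$ from \eqref{psoln} pairs an $L^2$ section on the cylinder with $H^{1/2}$ trace data on $\calV$. This gives the asserted $H^{-1/2}$ regularity, and the pairing is sharp because $t^{-1/2}e^{-t}$ lies in $L^2$ against the model volume form only just. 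The loss of $k$ additional derivatives for $a_{k,0}, b_{k,0}$ comes because extracting the $k$-th coefficient requires $k$ applications of tangential differential operators involving $\del_y$ to ``undo'' the corresponding terms in the inductive scheme, each costing one derivative of $y$-regularity. The extra $\varepsilon$-loss for $\ell \neq 0$ is the standard cost of the logarithmic factor in such a pairing, and the detailed functional-analytic framework is precisely the weak expansion machinery of \cite{Mazzeo91_EllTheoryOfDiffEdgeOp}.
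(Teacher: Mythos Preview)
Your approach is genuinely different from the paper's and largely sound for the model operator, but there is a real gap in the passage from $N(\Dirac)$ to $\Dirac$.

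The paper does not argue via Fourier transform in $y$. Its sketch invokes the edge parametrix: for $\mu\in(-1/2,1/2)$ one has $G\circ\Dirac = I - R$ where $R$ is smoothing but, crucially, does \emph{not} map into polyhomogeneous sections (unlike the $\mu>1/2$ case). The weak expansion is then obtained, as the paper says, ``in a somewhat indirect manner using successive meromorphic extensions of the Mellin transform'' in the radial variable $r$, citing \S7 of \cite{Mazzeo91_EllTheoryOfDiffEdgeOp}. The Mellin-in-$r$ approach separates the $r$-asymptotics from the $(\theta,y)$ dependence and works for variable coefficients in $y$; this is what makes it robust for the actual $\Dirac$ rather than its normal operator.

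Your Fourier-in-$y$ reduction to Bessel functions is exactly right for $N(\Dirac)$, and in fact the paper carries out essentially this computation later (Proposition after Proposition~\ref{Besselsoln}) to identify the symbol of the DtN map. But $\Dirac$ is not translation-invariant in $y$, so you cannot simply Fourier transform the full equation. Your sentence ``the error terms arising from the difference $\Dirac - N(\Dirac)$ are absorbed by an inductive matching of indicial roots, exactly as for Proposition~\ref{sexp}'' is where the argument is incomplete: in the polyhomogeneous case each iteration gains a full power of $r$ and the coefficients stay smooth, so the formal scheme closes; here the coefficients lose $y$-regularity at each step, and the inhomogeneous terms you feed back into the model are no longer smooth enough to Fourier transform and analyze pointwise in $\eta$. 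Making this iteration rigorous with only distributional $y$-regularity is precisely the content of the Mellin-transform argument you would need to supply, and at that point you have reproduced the paper's route rather than bypassed it. Your duality heuristic for the $H^{-1/2}$ regularity of the leading trace is correct in spirit but is again a statement about the model; transporting it to $\Dirac$ requires the parametrix machinery.
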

\begin{remark}
Since these results are local, they hold for solutions $u$ of $\Dirac u = 0$ branched along a graph locally near interior points 
of edges of that graph.
\end{remark}
\begin{remark}
In the following, we do not need to keep track of the terms with logarithmic coefficients too carefully; we have recorded their precise structure
here for the sake of accuracy. However, we shall almost exclusively be considering the leading coefficients of these expansions. For that reason,
we shall more simply denote the leading coefficients by $a_0, b_0$ for this second theorem, and by $a_1, b_1$ in the first when $u
\in r^\mu L^2$ for $\mu \in (1/2, 3/2)$. 
\end{remark}
\begin{proof}[Sketch of proofs] These results rely on the existence and pointwise properties of left parametrices for 
$\Dirac$ as elements of the edge calculus.  First suppose that the weight $\mu$ lies in $(1/2, 3/2)$.  In this case, 
$B(\Dirac): t^{\mu}H^1_b \to t^{\mu-1}L^2$ is injective with finite dimensional cokernel, so by rescaling and inverse Fourier 
transform we deduce the injectivity, closed range and infinite dimensional cokernel of the normal operator 
$N(\Dirac): r^{\mu} H^1_e \to r^{\mu-1}L^2$. 

The parametrix construction in the edge calculus then yields an operator $G \in \Psi^{-1, \calE}_e(\calU \setminus \Sigma)$, 
where $\calE$ is an index family describing the asymptotic expansions of the Schwartz kernel of $G$ at each of the boundary 
faces of the edge double space associated to the blowup $\calU_{\Si}$. These expansions allow one to prove that this 
operator is bounded as a map $r^{\mu-1}L^2 \to r^\mu H^1_e$ and satisfies 
\[
G \circ \Dirac = I - R.
\]
This remainder term $R$ is a smoothing operator, hence maps $r^\mu H^1_e \to r^\mu H^n_e$ for all $n \geq 0$; 
more importantly, it has imagine in the space of polyhomogeneous distributions $\calA_{\phg}^{E} \subset r^\mu H^n_e$,
where $E$ is the index set for the expansion of $G$ along its `left' ($r \to 0$) face.  Thus if $\Dirac u = 0$, then applying
$G$ to both sides yields that $u = Ru$, from which we conclude that $u$ is polyhomogeneous.

By contrast, when $\mu \in (-1/2, 1/2)$, the Bessel operator is surjective but has nontrivial (finite dimensional) nullspace,
hence $N(\Dirac): r^{\mu}H^1_e \to r^{\mu-1}L^2$ is surjective but has infinite dimensional nullspace. We can then construct
a {\it right} parametrix $G'$ such that $\Dirac \circ G' = I - R'$, where $R'$ has range in polyhomogeneous sections, but
in this case the left parametrix $G$, $G \circ \Dirac = I - R$, leaves an error term which is smoothing but no longer maps
into polyhomogeneous sections. The weak asymptotic expansion which holds for solutions of $\Dirac u = 0$ in this case
is obtained in a somewhat indirect manner using successive meromorphic extensions of the Mellin transform, 
see \S 7 of \cite{Mazzeo91_EllTheoryOfDiffEdgeOp} for details. 
\end{proof}
One of the key conclusions of all of this is the existence of the Cauchy data map $\calB_k$.

\begin{definition}
There exists a bounded map
\[
\calB_0:  Z_0 \longrightarrow H^{-1/2}(\Si; \CC) \oplus H^{-1/2}(\Si; \CC)
\]
which sends any $L^2$ solution $u$ of $\Dirac u = 0$ to its pair of leading coefficients 
\[
\calB_0( u) = \begin{pmatrix} a_0(y) \\ b_0(y) \end{pmatrix}
\]
(which we typically write more simply as $\calB_0(u) = (a_0(y), b_0(y))$, where these leading coeficients 
are the ones appearing in the weak expansion of Proposition \ref{wexp}. Similarly, with $Z_1 = \mathrm{ker}\,(\Dirac|_{r L^2})$, there exists a bounded map of finite rank
\[
Z_1 \ni u \mapsto \calB_1(u) = \begin{pmatrix} a_1(y) \\ b_1(y) \end{pmatrix}  \in \calC^\infty(\Si; \CC) \oplus \calC^\infty(\Si; \CC),
\]
where now the leading coefficients are as in Proposition \ref{sexp}.
\end{definition} 


\subsection{The Calderon subspace}
We now consider the space of `Cauchy data' corresponding to solutions $u \in Z_0$, i.e., the image of the map $\calB_0$. 
This is called the Calderon subspace of $\Dirac$. 
The main result of this section is that this subspace $\calB_0(Z_0) \subset H^{-1/2} \oplus H^{-1/2}$ is the graph of
a certain bounded operator from $H^{-1/2}$ to $H^{-1/2}$; this operator is the analogue of the classical Dirichlet-to-Neumann 
operator in this edge setting (from a slightly different point of view, it is also the analogue of the scattering operator). 

\begin{proposition}
There exists an elliptic pseudodifferential operator $\calN \in \Psi^{0}(\Si, \CC)$ such that 
if $u \in Z_0$ and  $\calB(u) = (a_0, b_0)$, then 
\[
b_0 = \calN (a_0).
\]
This operator is self-adjoint on $L^2(\Si)$, and has principal symbol $\sigma_0(\calN)(y,\eta) = -i \heta = -i \eta/ |\eta|$;
this acts by Clifford multiplication.
\label{DtN}
\end{proposition}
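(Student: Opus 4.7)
The plan is to construct a Poisson operator $P\colon H^{-1/2}(\Si;\CC)\to Z_0$ sending each prescribed datum $a_0$ to a distinguished solution $u\in Z_0$ whose weak expansion (Proposition \ref{wexp}) has first leading coefficient equal to $a_0$, and then to define $\calN = \pi_{b_0}\circ P$, where $\pi_{b_0}$ extracts the second leading coefficient. The well-definedness of $\calN$ (i.e., that $b_0$ is a function of $a_0$ alone, independent of the solution chosen) would follow from the one-dimensionality of the kernel of $B(\Dirac)$ on $t^\mu L^2$ for $\mu \in (-1/2,1/2)$, shown in Proposition \ref{BDmapprop}: in the normal operator model only one $b_0$-datum is compatible with each $a_0$, and $Z_1$ (which lies in $rL^2$) contributes trivially to the leading $a_0$ and $b_0$ coefficients.

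The construction of $P$ would mirror the parametrix construction earlier in this section. At the model Bessel level, the explicit kernel element \eqref{psoln} gives a one-dimensional Poisson map $\CC \to \ker B(\Dirac)$ sending $a$ to $a\cdot \bigl(e^{-i\theta/2},\, -i\heta\, e^{i\theta/2}\bigr)^T t^{-1/2} e^{-t}$. Rescaling in $t = s|\eta|$ and inverse Fourier transforming in the dual variable $\eta$ then produces a model Poisson operator $P_0$ for $N(\Dirac)$; the standard iterative correction procedure of the edge calculus (composition with $\Dirac$, absorption of the polyhomogeneous residual error via the right parametrix $G'$) upgrades $P_0$ to a genuine edge Poisson operator $P$ with polyhomogeneous Schwartz kernel on a suitably blown-up double space. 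From the general structure of the edge Poisson calculus \cite{Mazzeo91_EllTheoryOfDiffEdgeOp}, $\calN = \pi_{b_0}\circ P$ is then a classical pseudodifferential operator of order $0$ on $\Si$. Its principal symbol is read off directly from \eqref{psoln}: at frequency $\eta$ the leading asymptotic data satisfies $b_0 = -i\heta\, a_0$, so $\sigma_0(\calN)(y,\eta) = -i\eta/|\eta|$, which under the identification of Remark \ref{naturalbvs} is Clifford multiplication by $\heta$. Ellipticity is then immediate since $\cl(\heta)$ is invertible.

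Self-adjointness will follow from Green's identity applied to pairs $u,v\in Z_0$. Integrating $\langle \Dirac u, v\rangle - \langle u, \Dirac v\rangle$ over $\{r>\epsilon\}\subset M_\Si$ and sending $\epsilon\to 0$, the volume contributions vanish while the boundary integral over $\{r=\epsilon\}$, evaluated using the expansions of Proposition \ref{wexp}, converges to a finite sesquilinear pairing in the $(a_0, b_0)$ data of $u$ and $v$; the naively divergent $\epsilon^{-1}$ contributions cancel because the modes $e^{\pm i\theta/2}$ are orthogonal under the endomorphism $\cl(\del_r)$, leaving a pairing that forces $\calN^* = \calN$. The main obstacle in the program is the third step of the Poisson construction: upgrading $P_0$ to a genuine edge Poisson operator requires the polyhomogeneous Schwartz kernel machinery and composition formulas of the edge calculus, together with the surjectivity supplied by Proposition \ref{BDmapprop} to ensure that each iterative correction is well-posed. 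Granting this machinery, both the principal symbol computation and the Green's identity argument are routine.
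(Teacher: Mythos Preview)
Your proposal follows essentially the same route as the paper.  Both arguments start from the explicit Bessel nullspace element \eqref{psoln}, rescale and take the inverse Fourier transform to obtain the model relation $b_0=\calH a_0$ for $N(\Dirac)$ (the paper names this the Hilbert transform explicitly), and then defer the passage from $N(\Dirac)$ to $\Dirac$ to the edge Poisson/parametrix machinery of \cite{MaVe}.  Your framing through an explicit Poisson operator $P$ and the definition $\calN=\pi_{b_0}\circ P$ is just a repackaging of that same construction.  Your Green's-identity argument for the symmetry of $\calN$ is exactly the computation the paper carries out in the proof of Corollary~\ref{cor:bpairing}.

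Two small remarks.  First, your concern about ``naively divergent $\epsilon^{-1}$ contributions'' in the boundary integral is misplaced: the leading parts of $u,v$ are $O(r^{-1/2})$, the Clifford endomorphism mixes the two components into a product that is $O(r^{-1})$, and the surface element on $\{r=\epsilon\}$ is $\epsilon\,d\theta\,dy$, so the integral is finite from the outset with no cancellation needed.  Second, your argument for well-definedness of $\calN$ (one-dimensional Bessel kernel plus $Z_1$ having trivial leading data) is only a symbol-level heuristic; it does not by itself rule out a global solution in $Z_0$ with $a_0=0$ and $b_0\neq 0$.  The graph property of $\Lambda_{\Dirac}$ over the $a_0$-slot is a genuine consequence of the edge Poisson construction you invoke, and you are right to flag that as the substantive step --- the paper likewise simply cites \cite{MaVe} at this point.
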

\begin{remark}
Although $\calN$ is $\CC$-linear, it is necessary below to work in {\it real} Hilbert spaces, and thus 
regard $\calN$ as an $\RR$-linear operator only.
\end{remark}
\begin{corollary}
\label{cor:bpairing}
The range $\calB_0(Z_0) \subset H^{-1/2}(\Si; \CC) \oplus H^{-1/2}(\Si; \CC)$ is a closed subspace which we denote 
$\Lambda_{\Dirac}$.   Denote by the same symbol the intersection of this subspace with $L^2 \oplus L^2$; this is 
closed in $L^2 \oplus L^2$ since it is the graph of $\calN$ restricted to $L^2$.  Then $\Lambda_{\Dirac}$ 
is Lagrangian with respect to the natural $\RR$-linear pairing
\[
\omega\big( (a_0, b_0), (\tilde a_0, \tilde b_0)\big) =  \mathrm{Re}\, \int_{\Si}   \cl(\del_r) \begin{pmatrix} a_0 e^{-i\theta/2}\\ b_0 e^{i\theta/2} 
\end{pmatrix} \cdot  \overline{\begin{pmatrix} \tilde{a}_0 e^{-i\theta/2}\\ \tilde{b}_0 e^{i\theta/2} \end{pmatrix}} = 
- \mathrm{Im} \, \int_\Si  a_0(y) \overline{\tilde{b}_0(y)}  + b_0(y) \overline{\tilde{a}_0(y)}. 
\]
\end{corollary}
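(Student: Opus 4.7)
The plan is to derive this corollary directly from Proposition \ref{DtN} together with a Green's formula computation for $\Dirac$ applied to pairs of elements of $Z_0$. Closedness of $\Lambda_\Dirac \subset H^{-1/2}(\Si;\CC)\oplus H^{-1/2}(\Si;\CC)$ is immediate from Proposition \ref{DtN}: the subspace equals the graph of the bounded operator $\calN$. Since $\calN$ is a pseudodifferential operator of order zero it is also bounded on $L^2$, so $\Lambda_\Dirac \cap (L^2\oplus L^2)$ is the graph of $\calN|_{L^2}$ and therefore closed in $L^2 \oplus L^2$.

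For isotropy, take $u, v \in Z_0$ and apply Green's formula to the compact region $M_\epsilon := M_\Si \cap \{r \ge \epsilon\}$:
\[
\int_{M_\epsilon}\bigl(\langle\Dirac u, v\rangle - \langle u, \Dirac v\rangle\bigr)\, dV_g = \pm \int_{\{r=\epsilon\}}\langle\cl(\partial_r)u, v\rangle\, \epsilon\, d\theta\, dy.
\]
The left-hand side vanishes because $\Dirac u = \Dirac v = 0$. Substituting the leading behavior $u \sim r^{-1/2}(a_0\, e^{-i\theta/2},\, b_0\, e^{i\theta/2})^T$ from Proposition \ref{wexp} (and analogously for $v$) and using the matrix form of $\cl(\partial_r)$ following \eqref{Dirac-cyl}, a direct computation shows that the integrand on the right is $O(r^{-1})$, so when combined with the $\epsilon$ from the volume form it has a finite limit as $\epsilon \to 0$. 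That limit is a positive multiple of $\omega\bigl((a_0,b_0),(\tilde a_0,\tilde b_0)\bigr)$ once one takes the real part and performs the $\theta$-integration (using $\mathrm{Re}(iz) = -\mathrm{Im}\, z$), matching the defining formula in the statement. The Green identity forces this limit to vanish, proving $\omega$-isotropy of $\Lambda_\Dirac$.

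For the maximality half of the Lagrangian statement, suppose $(\alpha,\beta) \in L^2 \oplus L^2$ $\omega$-annihilates $\Lambda_\Dirac$. Testing against $(\tilde a_0, \calN\tilde a_0)$ gives
\[
-\mathrm{Im}\int_\Si\bigl(\alpha\,\overline{\calN\tilde a_0} + \beta\,\overline{\tilde a_0}\bigr)\, dy = 0 \quad \text{for every } \tilde a_0 \in L^2(\Si;\CC).
\]
Moving $\calN$ off $\tilde a_0$ by the self-adjointness in Proposition \ref{DtN} (tracking signs so as to be consistent with the isotropy already proved, since $(a_0,\calN a_0)$ must itself be $\omega$-isotropic), this reduces to $\mathrm{Im}\int_\Si(\beta - \calN\alpha)\overline{\tilde a_0}\, dy = 0$. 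Replacing $\tilde a_0$ with $i\tilde a_0$ separates real and imaginary parts, and the arbitrariness of $\tilde a_0$ in $L^2$ then forces $\beta = \calN\alpha$. Hence $(\alpha,\beta)\in\Lambda_\Dirac$, so $\Lambda_\Dirac$ is its own $\omega$-annihilator in $L^2\oplus L^2$, which is the Lagrangian property.

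The main obstacle is step two: the leading coefficients in Proposition \ref{wexp} are only $H^{-1/2}$ distributions and the expansion is weak rather than strong, so the pointwise boundary calculation above is formal. A rigorous treatment first restricts to the dense subspace of solutions whose leading data are smooth (produced by applying the right parametrix $G'$ to smooth right-hand sides, as described in the sketch following Proposition \ref{wexp}), carries out the Green's formula computation there, and then extends the identity to all of $Z_0$ by continuity of $\omega$ and of the trace map $\calB_0$. A secondary subtlety, flagged by the remark that $\omega$ is merely $\RR$-linear, is tracking carefully how the skew-Hermitian Clifford action interacts with the $\CC$-valued Hermitian pairing on $\slS\otimes\cI$ so that self-adjointness of $\calN$ and the boundary term from Green's formula combine to give exactly the form $\omega$ written in the corollary, with the correct overall sign.
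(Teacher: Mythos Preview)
Your proof is correct. The isotropy step via Green's formula matches the paper's argument essentially verbatim.

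For maximality, however, you take a genuinely different route. You deduce $\beta = \calN\alpha$ algebraically from the adjointness property of $\calN$ in Proposition~\ref{DtN}. The paper instead argues constructively: given a smooth pair $(\tilde a_0,\tilde b_0)$ in the $\omega$-annihilator, it sets
\[
v_1 = \chi(r)\, r^{-1/2}\begin{pmatrix}\tilde a_0 e^{-i\theta/2}\\ \tilde b_0 e^{i\theta/2}\end{pmatrix},
\]
uses the annihilator hypothesis together with another Green's formula computation to show that $f:=\Dirac v_1 \in L^2$ is orthogonal to $Z_0$, and then invokes Theorem~\ref{mainmapD} to solve $\Dirac w=f$ with $w\in rH^1_e$. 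The difference $v=v_1-w$ lies in $Z_0$ and has boundary trace $(\tilde a_0,\tilde b_0)$, so this pair lies in $\Lambda_\Dirac$. Your approach is cleaner once Proposition~\ref{DtN} is available; the paper's approach does not rely on the adjointness of $\calN$ at all, and yields the slightly stronger statement that every annihilator pair is realized as the boundary trace of an actual element of $Z_0$.

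One caveat worth making explicit rather than leaving to your parenthetical about ``tracking signs'': the symbol $-i\hat\eta$ makes $\calN$ \emph{skew}-Hermitian with respect to the complex $L^2$ inner product (equivalently, $i\calN$ is Hermitian). This is exactly what is needed both for isotropy of the graph and for your computation to produce $\beta-\calN\alpha$ rather than $\beta+\calN\alpha$; invoking ``self-adjointness'' literally would give the wrong sign.
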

\begin{proof}
Fix $u, v \in Z_0$ and set $\calB_0(u) = (a_0, b_0)$, $\calB_0(v) = (\tilde{a}_0, \tilde{b}_0)$.  Suppose that both of these lie
in $L^2 \oplus L^2$.  By the self-adjointness of the Dirac operator on the {\it real} Hilbert space $L^2(M_\Si; \CC^2)$ 
with inner product
\[
\langle u, v \rangle = \mathrm{Re}\, \int u \cdot \overline{v}
\]
we have 
\[
0 = \langle \Dirac u, v \rangle - \langle u, \Dirac v \rangle = \lim_{\epsilon \to 0} \, \mathrm{Re}\, \int_{\{r=\epsilon\} }  
\cl(\del_r) u \cdot \overline{v} = \omega\big( (a_0, b_0),  (\tilde{a}_0, \tilde{b}_0)\big).
\]
This shows that $\Lambda_{\Dirac}$ is isotropic in $L^2 \oplus L^2$. 

Next, suppose that $u \in Z_0$ and $\calB_0(u) = (a_0, b_0) \in L^2 \oplus L^2$. Let $(\tilde{a}_0, \tilde{b}_0) \in L^2 \oplus L^2$ be some 
other pair such that $\omega\big( (a_0, b_0), (\tilde{a}_0, \tilde{b}_0) \big) = 0$.  By the continuity in $L^2$ of this bilinear form, it suffices to 
assume that $\tilde{a}_0$ and $\tilde{b}_0$ are both smooth. We claim that there exists an element $v \in Z_0$ such that $\calB_0(v )=
(\tilde{a}_0, \tilde{b}_0)$. Indeed, defining
\[
v_1 = \chi(r) \begin{pmatrix}  \tilde{a}_0 e^{-i\theta/2} \\ \tilde{b}_0 e^{i\theta/2} \end{pmatrix} r^{-1/2},
\]
where $\chi$ is a cutoff function which equals $1$ for $r$ sufficiently small and vanishes for $r \geq r_0$, then 
$v_1 \in L^2$ and by a short calculation, $\Dirac v_1 := f$ is bounded by $Cr^{-1/2}$, so $f \in L^2$ as well.
We still write $\calB_0(v_1 ) = (\tilde{a}_0, \tilde{b}_0)$. 

Now observe that $f \in Z_0^\perp$. Indeed, if $\phi \in Z_0$, then 
\[
\int f \phi = \int (\Dirac v_1) \phi = \int  (\Dirac v_1 ) \phi - v_1 (\Dirac \phi) = 
\omega\big( (a_0, b_0), (\tilde{a}_0, \tilde{b}_0)\big) = 0.
\]
The integration by parts is straightforward to justify.   We may now apply Theorem~\ref{mainmapD} to find an
element $w \in r H^1_e$ such that $\Dirac w = f$.  Finally, defining $v = \hat{v} - w$, we see that
$\Dirac v = 0$ and, since $|w| r^{1/2} \to 0$, we also have $\calB_0(v) = (\tilde{a}_0, \tilde{b}_0)$.  In other words,
the pair $(\tilde{a}_0, \tilde{b}_0) \in \Lambda_{\Dirac}$. 
\end{proof}

We now indicate some steps in the proof of Proposition \ref{DtN}.   As already explained, there
exists a generalized right inverse $G$ for the map $\Dirac: H^1_e \to r^{-1} L^2$. This operator satisfies
\[
\Dirac \circ G = I - \Pi_1, \qquad G\circ \Dirac = I - \Pi_2
\]
where $\Pi_1$ is the finite rank projector onto the cokernel $Z_1$ of $\Dirac$ in $r^{-1}L^2$ and $\Pi_2$ 
is the infinite rank projector onto the nullspace $Z_0$ of $\Dirac$ in $H^1_e$.   All of the operators $G, \Pi_1, \Pi_2$
are pseudodifferential edge operators, and as we have explained briefly above, they are constructed starting
from the corresponding operators for $B(\Dirac)$. 

In a similar vein, the boundary trace map $\calB_0$ for $\Dirac$ on $L^2$ is constructed using the corresponding 
boundary trace map for this model Bessel operator.  We have already described the leading behavior of $L^2$ 
solutions of $B(\Dirac)$, however: 
\begin{proposition}  If $\psi \in \calK(0) = \ker( B(\Dirac)|_{L^2})$, then there exists a complex number $\sigma$ such that
\[
\psi = \sigma \, \begin{pmatrix} \  e^{-i\theta/2}\\ - i \heta \, e^{i\theta/2}  \end{pmatrix}   t^{-\frac12}  e^{-t}.
\]
\label{Besselsoln}
\end{proposition}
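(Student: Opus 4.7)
The plan is to identify all $L^2$ solutions mode by mode, using the separation of variables and the reduction to modified Bessel equations already carried out in the paragraph preceding the statement. The model Bessel operator $B(\Dirac)$ commutes with the $\theta$-shift (taking the $\Z_2$ antiperiodicity into account), so any $\psi\in\calK(0)$ decomposes into Fourier modes indexed by $k\in\ZZ$, the $k$-th mode being of the form $(A_k(t)e^{i(k-1/2)\theta},\,B_k(t)e^{i(k+1/2)\theta})$. The decoupling computation already recorded shows $A_k$ and $B_k$ satisfy modified Bessel equations of orders $\nu_{k-1}$ and $\nu_k$ respectively, where $\nu_k=|k+\tfrac12|$, and the relation $(t\partial_t-i\partial_\theta)\psi_2 = it\heta\tilde\psi_1$ fixes the relative coefficient between them within each mode.

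Next I would enforce $L^2$-decay at infinity. The asymptotics $I_\nu(t)\sim t^{-1/2}e^{t}$ and $K_\nu(t)\sim t^{-1/2}e^{-t}$ recalled from Lebedev show that the $I_\nu$-component is never in $L^2(t\,dt\,d\theta)$ near $t=+\infty$, so the $I$-coefficients must vanish; only pure $K_\nu$-solutions survive. At $t\to 0$, $K_\nu(t)\sim t^{-\nu}$, so the integrability condition $\int_0 t^{-2\nu}\cdot t\,dt<\infty$ forces $\nu<1$. Since $\nu_k=|k+\tfrac12|\in\{\tfrac12,\tfrac32,\ldots\}$, the only way both orders $\nu_{k-1}$ and $\nu_k$ in a single mode can be $<1$ is to have $\nu_{k-1}=\nu_k=\tfrac12$, and this happens exactly for $k=0$; the neighboring modes $k=\pm 1$ each feature one component of order $\tfrac32$, which is $L^2$-forbidden. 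Thus $\calK(0)$ is at most one-dimensional, spanned by the $k=0$ mode.

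Finally I would write down the $k=0$ solution explicitly. Using $K_{1/2}(t)=\sqrt{\pi/(2t)}\,e^{-t}$, both components reduce to constant multiples of $t^{-1/2}e^{-t}$, giving a solution proportional to
\[
\begin{pmatrix} e^{-i\theta/2} \\ d_0(\heta)\,e^{i\theta/2} \end{pmatrix} t^{-1/2}e^{-t}.
\]
The constant $d_0(\heta)$ is then determined by plugging this ansatz into one of the two first-order equations, say $(t\partial_t-i\partial_\theta)\psi_2 = it\heta\tilde\psi_1$. A direct substitution, using $\partial_t(t^{-1/2}e^{-t})=-(1/(2t)+1)t^{-1/2}e^{-t}$ and $-i\partial_\theta(e^{i\theta/2})=\tfrac12 e^{i\theta/2}$, cancels the $1/t$-terms and yields $d_0(\heta)=-i\heta$, which is exactly the coefficient in the claimed formula.

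The one genuinely non-trivial step is the very first reduction, namely that any distributional $L^2$-kernel element really is a Fourier series in $\theta$ with the coupling structure above rather than something more exotic. This follows from ellipticity of $B(\Dirac)$ away from $t=0$ (so $\psi$ is smooth there) combined with the fact that $B(\Dirac)$ is a bounded operator of the Fourier decomposition on each finite-dimensional mode subspace; the bulk of the work, the integrability bookkeeping in $t$, is routine given the Bessel asymptotics.
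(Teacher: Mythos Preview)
Your argument is correct and follows essentially the same route as the paper: the paper's proof of this proposition is contained in the discussion of \S3.4 leading up to the displayed solution \eqref{psoln}, where the coupled system is reduced to modified Bessel equations, the $I_\nu$-parts are discarded by growth at infinity, and the $L^2$-integrability at $t=0$ (equivalently, the condition $-1/2<\mu<1/2$) singles out $k=0$. Your explicit verification that $d_0(\heta)=-i\heta$ fills in the computation the paper leaves to the reader, and your closing remark that the first-order coupling forces both components of any nonzero mode to be nonzero is exactly what makes the exclusion of the $k=\pm 1$ modes airtight.
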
 

There is an analogous result for $N(\Dirac)$ which can be obtained from this result by rescaling and taking the inverse 
Fourier transform: 
\begin{proposition}
Suppose that $\psi \in L^2( \RR^3 \setminus \RR; \slS \otimes \calI)$ and $\Dirac \psi = 0$. The boundary
trace $\calB_0(\psi) = (a_0, b_0)$ is defined using Proposition \ref{wexp}. These leading coefficients lie in $H^{-1/2}(\RR)$ and satisfy
\[
b_0(y) = \calH ( a_0)(y).
\]
Here $\calH$ is the Hilbert transform, defined via its oscillatory integral representation
\[
\calH a_0 (y) = \int e^{i(y - \tilde{y})\eta} (-i \, \mathrm{sgn}\, \eta)\, a_0(\tilde{y}) \, d\tilde{y} d\xi.
\] 
This is a pseudodifferential operator of order $0$ with principal symbol $-i \heta = -i \eta/|\eta|$.  
\end{proposition}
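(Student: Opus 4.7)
The plan is to exploit translation invariance in the longitudinal variable $y$ to reduce to the model Bessel operator, apply the explicit description of its kernel in Proposition \ref{Besselsoln}, and read off the relation between leading coefficients from the asymptotics as $s \to 0$.

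\textbf{Reduction to the Bessel kernel.} First, I would take the Fourier transform of $\psi$ in $y$, obtaining a tempered distribution $\widehat{\psi}(s,\theta,\eta)$ which, for each $\eta \neq 0$, satisfies $\widehat{N(\Dirac)}\,\widehat{\psi} = 0$. Rescaling $t = s|\eta|$ and writing $\heta = \eta/|\eta|$, this becomes $B(\Dirac) \tilde\psi = 0$ in the $(t,\theta)$ variables, where $\tilde\psi(t,\theta,\eta) := \widehat{\psi}(t/|\eta|,\theta,\eta)$. By Proposition \ref{Besselsoln}, for almost every $\eta$ there is a complex-valued function $\sigma(\eta)$ such that
\[
\tilde\psi(t,\theta,\eta) = \sigma(\eta) \begin{pmatrix} e^{-i\theta/2} \\ -i\heta\,e^{i\theta/2}\end{pmatrix} t^{-1/2} e^{-t}.
\]
Undoing the rescaling gives $\widehat\psi(s,\theta,\eta) = \sigma(\eta) |\eta|^{-1/2} \bigl(e^{-i\theta/2},\, -i\heta\, e^{i\theta/2}\bigr)^T\, s^{-1/2} e^{-s|\eta|}$.

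\textbf{Matching leading coefficients.} Proposition \ref{wexp} supplies the weak expansion
\[
\psi(s,\theta,y) \sim \begin{pmatrix} a_0(y) e^{-i\theta/2} \\ b_0(y) e^{i\theta/2}\end{pmatrix} s^{-1/2} + O(s^{1/2}\log s),
\]
with $a_0,b_0 \in H^{-1/2}(\RR)$. Taking Fourier transform in $y$ (interpreting the weak expansion as a statement about distributional pairings with test functions of $(s,y)$), the leading $s^{-1/2}$ coefficient of $\widehat\psi(s,\theta,\eta)$ as $s \to 0$ is $\bigl(\widehat{a}_0(\eta) e^{-i\theta/2},\, \widehat{b}_0(\eta) e^{i\theta/2}\bigr)^T$. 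Comparing with the expression derived from the Bessel kernel, which has leading $s^{-1/2}$ coefficient $\sigma(\eta)|\eta|^{-1/2} \bigl(e^{-i\theta/2},\,-i\heta\,e^{i\theta/2}\bigr)^T$, gives
\[
\widehat{a}_0(\eta) = \sigma(\eta)|\eta|^{-1/2}, \qquad \widehat{b}_0(\eta) = -i\heta\,\sigma(\eta)|\eta|^{-1/2}.
\]
Eliminating $\sigma$ yields $\widehat{b}_0(\eta) = -i\,\mathrm{sgn}(\eta)\,\widehat{a}_0(\eta)$, which is precisely the statement $b_0 = \calH(a_0)$ via the oscillatory integral representation. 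The $H^{-1/2}$-regularity then follows from the fact that $\calH$ is bounded on every $H^s$.

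\textbf{Main obstacle.} The nontrivial step is justifying the distributional manipulation: since $a_0,b_0$ are only in $H^{-1/2}(\RR)$ and the expansion in Proposition \ref{wexp} is in the weak sense, one must check that the Fourier transform in $y$ commutes correctly with extraction of the leading $s^{-1/2}$ coefficient. The cleanest way to handle this is to test against Schwartz functions $\varphi(y)$, multiply out the left and right sides, pass to Fourier variables via Plancherel, and invoke the pointwise (in $\eta$) identification of $\widehat\psi$ with the explicit Bessel solution on each fiber. This is essentially the content of the Mellin-based derivation of the weak expansion in \cite{Mazzeo91_EllTheoryOfDiffEdgeOp}, so the argument reduces to invoking that framework together with the explicit fibrewise computation above.
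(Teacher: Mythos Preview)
Your proposal is correct and follows essentially the same route as the paper: Fourier transform in the longitudinal variable, invoke the explicit Bessel kernel from Proposition~\ref{Besselsoln}, and read off $\widehat{b}_0 = -i\,\mathrm{sgn}(\eta)\,\widehat{a}_0$. The only substantive difference is that the paper additionally includes a direct Plancherel computation showing $\int |\psi|^2\, s\,ds\,d\theta\,dw = 2\pi \int |\sigma(\eta)|^2 |\eta|^{-1}\,d\eta$, which independently verifies that the leading coefficient lies in $\dot{H}^{-1/2}$; you instead defer this regularity to Proposition~\ref{wexp}, which is also fine.
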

\begin{proof}  One deduces immediately from the formula in Proposition~\ref{Besselsoln} that if $\psi \in L^2$ then 
\[
\psi ( s, \theta, w) = \int e^{i w \cdot \eta}  \sigma( \eta) \begin{pmatrix} e^{-i\theta/2} \\ i \heta e^{i\theta/2}\end{pmatrix}
s^{-1/2} e^{-s|\eta|}\, d\eta
\]
for some $\sigma(\eta)$. This shows that the Fourier transforms of $a_0$ and $b_0$ equal
$\sigma(\eta)$ and $-i\heta \sigma(\eta)$, respectively.  Furthermore, by the Plancherel theorem,
\[
\int |\psi|^2  s ds d\theta dw = 4\pi \int |\sigma(\eta)|^2  s^{-1} e^{-2s|\eta|} \, s ds d\eta = 
2\pi \int |\sigma(\eta)|^2 |\eta|^{-1} \, d\eta.
\]
This proves that $\sigma$ lies in the (homogeneous) Sobolev space $\dot{H}^{-1/2}(\RR)$ (for the 
ordinary Sobolev space, the Fourier multiplier would be smooth and bounded near $\eta = 0$). 

The characterization of $\calH$ is immediate from these formulas.
\end{proof}

The final step is to prove a similar theorem for $\Dirac$ itself. This requires a somewhat more elaborate
parametrix construction as carried out in \cite{MaVe}. 

\subsection{Boundary operators}
\label{Sect_BdryOp}
A general theory of elliptic boundary problems associated to edge operators such as $\Dirac$ is worked out
in detail in \cite{MaVe}, see also \cite{Usula}, \cite{Usula2}.  In the present setting, we consider problems 
of the form
\begin{equation}
\Dirac  u = f, \qquad \calT ( \calB_0(u)) = h,
\label{bvp}
\end{equation}
with $u$ and $f$ in $L^2$ and where $\calT$ is a differential or pseudodifferential operator acting on pairs $(a_0,b_0) = \calB_0(u)$.
For the present goals, we consider only a fairly special case, where $\calT$ imposes algebraic conditions on these pairs.

So far we only discussed the boundary trace $\calB_0(u)$ for solutuions of $\Dirac u = 0$. However, there is an extension of
Proposition \ref{wexp} for solutions of inhomogeneous equations $\Dirac u = f$.  
\begin{proposition}[{\cite[\S 7]{Mazzeo91_EllTheoryOfDiffEdgeOp}}]
If $u \in H^1_e$ and $f:= \Dirac u \in L^2$, then
\[
u = \begin{pmatrix} a_0(y) e^{-i\theta/2} \\ b_0(y) e^{i\theta/2} \end{pmatrix} r^{-1/2} + \tilde{u},
\]
where $a_0, b_0 \in H^{-1/2}(\Si)$, and in a weak sense $\tilde{u} = \mathcal O(r^{1/2})$. 
\end{proposition}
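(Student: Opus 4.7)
The plan is to reduce this inhomogeneous statement to the weak expansion for $L^2$-homogeneous solutions (Proposition \ref{wexp}) by subtracting off a particular solution of higher-order regularity and a polyhomogeneous corrector for the residual forcing.

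\textbf{Step 1 (split the forcing).} By Theorem \ref{mainmapD}, the map $\Dirac: rH^1_e \to L^2$ has closed range equal to $Z_0^{\perp}$ in $L^2$, so $L^2$ decomposes orthogonally as $Z_0 \oplus Z_0^{\perp}$. Write $f = f_\perp + f_0$ with $f_\perp \in Z_0^{\perp}$ and $f_0 \in Z_0$, and choose $v \in rH^1_e$ with $\Dirac v = f_\perp$. Since $v$ lies in $rH^1_e$, it is $O(r^{1/2})$ in the weak sense employed in the statement.

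\textbf{Step 2 (corrector for $f_0$).} Since $f_0 \in Z_0$, Proposition \ref{wexp} gives it a leading weak expansion
\[
f_0 \sim \begin{pmatrix} \alpha(y) e^{-i\theta/2} \\ \beta(y) e^{i\theta/2} \end{pmatrix} r^{-1/2},
\]
with $\alpha, \beta \in H^{-1/2}(\Si)$. The plan is to construct a polyhomogeneous $\phi$ with $\Dirac \phi = f_0$ whose leading order is $r^{1/2}$. Operationally, the indicial operator in Subsection 3.2 shifts weights by $+1$, so if one inverts its action on the leading coefficient $(\alpha, \beta)$ one produces a formal ansatz $\phi_0 = r^{1/2}(\tilde\alpha e^{-i\theta/2}, \tilde\beta e^{i\theta/2})^T$ with $(\tilde\alpha,\tilde\beta)$ determined by $(\alpha,\beta)$ (possibly with a $\log r$ factor when the indicial roots $\pm 1/2$ produce a resonance). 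Iterating this formal inversion and then closing up with the right parametrix $G'$ of Subsection 3.5 yields a genuine polyhomogeneous solution $\phi$ with $\Dirac \phi = f_0$ and $\phi = O(r^{1/2})$ (possibly modulo logarithms) in the weak sense.

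\textbf{Step 3 (apply the homogeneous expansion).} Set $w := u - v - \phi$. Since $u \in H^1_e \subset L^2$, $v \in rH^1_e \subset L^2$, and $\phi$ has leading behavior $r^{1/2}$ (hence lies in $L^2$ near $F$), we have $w \in L^2$, and by construction $\Dirac w = f - f_\perp - f_0 = 0$, so $w \in Z_0$. Proposition \ref{wexp} now produces
\[
w \sim \begin{pmatrix} a_0(y) e^{-i\theta/2} \\ b_0(y) e^{i\theta/2} \end{pmatrix} r^{-1/2} + \tilde w,
\]
with $a_0, b_0 \in H^{-1/2}(\Si)$ and $\tilde w = O(r^{1/2})$ weakly. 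Recombining, $u = w + v + \phi$; the two terms $v$ and $\phi$ are both $O(r^{1/2})$ in the weak sense, so setting $\tilde u := \tilde w + v + \phi$ gives the asserted expansion with the same leading coefficients $(a_0, b_0)\in H^{-1/2}(\Si)$.

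\textbf{Main obstacle.} The delicate point is Step 2: constructing the polyhomogeneous corrector $\phi$ when the forcing $f_0$ itself has only a weak expansion with coefficients in $H^{-1/2}(\Si)$. The formal series inversion must be carried out in a framework that handles low regularity of the tangential coefficients and the possible coincidence of indicial roots (which produces logarithmic factors, absorbed into $\tilde u$). This is precisely the content of the Mellin transform / meromorphic extension machinery of Section 7 of \cite{Mazzeo91_EllTheoryOfDiffEdgeOp}, on which the proof here ultimately rests.
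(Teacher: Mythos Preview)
Step 2 cannot work: there is no $\phi$ with $\Dirac\phi=f_0$ and vanishing $r^{-1/2}$ leading term when $f_0\in Z_0\setminus\{0\}$. Any solution of $\Dirac\phi=f_0$ has the form $\phi=Gf_0+w'$ with $w'\in Z_0$, where $G$ is the generalized inverse for $\Dirac:H^1_e\to r^{-1}L^2$; killing the $r^{-1/2}$ term would require $\calB_0(Gf_0)=-\calB_0(w')\in\Lambda_{\Dirac}$. But Green's formula gives $\omega\big(\calB_0(Gf_0),\calB_0(\psi)\big)=\langle\Dirac Gf_0,\psi\rangle-\langle Gf_0,\Dirac\psi\rangle=\langle f_0,\psi\rangle$ for every $\psi\in Z_0$, and since $\Lambda_{\Dirac}$ is Lagrangian (Corollary~\ref{cor:bpairing}) membership of $\calB_0(Gf_0)$ in $\Lambda_{\Dirac}$ would force $f_0\perp Z_0$, hence $f_0=0$. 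Equivalently: a $\phi$ with vanishing leading coefficient lies in $rH^1_e$, whence $\Dirac\phi\in\mathrm{ran}\big(\Dirac|_{rH^1_e}\big)=Z_0^\perp$ by Theorem~\ref{mainmapD}, again contradicting $f_0\in Z_0\setminus\{0\}$. The piece $f_0$ therefore \emph{necessarily} contributes to the $r^{-1/2}$ coefficient of $u$, and your decomposition fails to isolate that coefficient in $w\in Z_0$ alone.

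The paper does not give a self-contained argument here; it cites \cite[\S7]{Mazzeo91_EllTheoryOfDiffEdgeOp} and remarks that the same parametrix methods as for Propositions~\ref{sexp} and~\ref{wexp} apply. The route implicit in the paragraph immediately following the proposition is the correct repair of your idea: write $u=Gf+\Pi_2 u$ using the generalized inverse directly. Then $\Pi_2 u\in Z_0$ and Proposition~\ref{wexp} supplies its weak expansion with $H^{-1/2}$ leading coefficients, while $Gf$ has leading coefficients in $H^{1/2}$ by the mapping properties of $G$ as an edge pseudodifferential operator (this is precisely the ``slight strengthening'' stated next). Both summands contribute to $(a_0,b_0)$; there is no way to push all of the $r^{-1/2}$ behaviour onto a homogeneous solution plus a remainder of order $r^{1/2}$.
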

We refer to the cited paper for the precise meaning of the decay of the higher order term $\tilde{u}$.  This result is proved
using the same parametrix methods as described in Section 3.6. In any case, this provides a way to define $\calB_0(u) = (a_0, b_0)$. 
The $H^{-1/2}$ regularity of these leading coefficients is optimal; indeed, we already exhibited solutions $\Dirac v = 0$ with leading 
coefficients lying in $H^{-1/2}$ but no better space.    There is a slight strenghening which is proved in the same place. 
\begin{proposition}
Let $G$ be the generalized inverse for $\Dirac: L^2 \to r^{-1}L^2$, so $G \Dirac = I - \Pi_2$, where $\Pi_2$ is the orthogonal projection onto
the nullspace of $\Dirac$ in $L^2$. If $\Dirac u = f$ with $u \in H^1_e$ and $f \in L^2$, and if $\Pi_2 u = 0$, or even if we only know that
$\Pi_2 u$ is polyhomogeneous or has some fixed positive finite order of regularity at the boundary, then 
\[
\calB_0(u) = (a_0, b_0) \in H^{1/2}(\Si)^2. 
\]
\end{proposition}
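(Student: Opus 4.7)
The plan is to split $u$ using the identity $u = G\Dirac u + \Pi_2 u = Gf + \Pi_2 u$, and, by linearity of the boundary trace (in the sense of the preceding proposition), reduce the claim to showing that both $\calB_0(Gf)$ and $\calB_0(\Pi_2 u)$ lie in $H^{1/2}(\Si;\CC)^2$.

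The contribution from $\Pi_2 u$ follows from the hypothesis. Indeed, $\Pi_2 u$ lies in $Z_0$, so $\Dirac(\Pi_2 u) = 0$, and any extra regularity at the boundary is transferred directly to its leading coefficients via the local expansion of Proposition \ref{wexp}, refined to Proposition \ref{sexp} in the polyhomogeneous case. The polyhomogeneous case gives smooth leading coefficients and a fortiori $H^{1/2}$ coefficients; a fixed positive order of boundary regularity may be chosen large enough that the same expansion analysis produces coefficients of Sobolev order at least $1/2$.

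The substantive step is the improved regularity of $\calB_0(Gf)$. The guiding principle is that $G$ is an edge pseudodifferential operator of order $-1$, so that, relative to the generic $H^{-1/2}$ regularity of leading coefficients of elements of $Z_0$, the composition $\calB_0 \circ G$ should gain one full tangential derivative. To make this precise I would first reduce to the normal-operator model by writing $G = G_0 + R$, where $G_0$ is the translation-invariant right parametrix for $N(\Dirac)$ built from Fourier transform in the tangential variable together with the explicit Bessel-function inverse of Section 3.4, and $R$ is a correction with strictly better behavior along the left boundary face of the edge double space. For $G_0 f$ the coefficient of $r^{-1/2}$ is given explicitly by a tangential Fourier multiplier of symbol class $|\eta|^{-1/2}$ applied to $f$, which manifestly maps $L^2 \to H^{1/2}$ boundedly.

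The main obstacle is the bookkeeping required to pass from this model statement to the corresponding statement for the true edge parametrix $G$ on $M_\Si$, namely verifying that the remainder $R$ contributes leading boundary coefficients of strictly better Sobolev regularity than $H^{1/2}$ rather than merely of the same order. This amounts to tracking the index sets of the Schwartz kernel of $G$ at each boundary face of the edge double space and ensuring that the corrections coming from curvature and lower-order terms in $\Dirac$ do not degrade the $H^{1/2}$ bound; the relevant left-face analysis is carried out in Section 7 of \cite{Mazzeo91_EllTheoryOfDiffEdgeOp}, and invoking it should complete the argument.
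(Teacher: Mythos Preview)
Your proposal is correct and follows exactly the same approach as the paper: decompose $u = Gf + \Pi_2 u$ via the generalized inverse identity, dispose of $\Pi_2 u$ using the regularity hypothesis, and then invoke the mapping properties of $G$ to see that $\calB_0(Gf) \in H^{1/2}$. The paper's proof is a terse two sentences asserting precisely this; you have simply unpacked the last step, sketching why the boundary trace of $Gf$ gains a full tangential derivative by reducing to the normal-operator model and the explicit Bessel inverse, which is the content underlying the paper's appeal to ``the mapping properties of $G$.''
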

This is a consequence of the mapping properties of $G$. Indeed, $u = Gf + \Pi_2 u$, and under the hypotheses, the second term on
the right has an expansion with more regular terms, so it suffices to analyze the expansion of $Gf$. This does have leading coefficients
in $H^{1/2}$. 

We now recall a generalization of the classical Calderon (Lopatinski-Schapiro) boundary condition which is necessary and sufficient 
for \eqref{bvp} to be well posed. From a functional analytic point of view, the requirement is that the restriction of $\calT$ 
to the Calderon subspace $\Lambda_{\Dirac}$ is Fredholm onto its image (where some suitable regularity is imposed on $h$).  
In this case, the boundary problem $(\Dirac, \calT)$ is called elliptic.   The beautiful fact is that this is equivalent to a much 
more easily verified condition involving the symbols of $\Dirac$ and $\calT$.   We shall state this more carefully momentarily 
in a special case. 

We next introduce the boundary conditions of interest.   Fix any pair of smooth functions $(c_1, d_1)$ along $\Si$, and assume
the nondegeneracy condition that $(c_1, d_1) \neq (0,0)$ everywhere along $\Si$.  To be concise with the notation, we write
$\psi = (c_1, d_1)$. This is an abuse of notation, but is a reminder that the motivating example is when $(c_1, d_1)$ is the pair of 
leading coefficients (of the $r^{1/2}$ term) of a $\Z_2$ harmonic spinor $\psi$. We say more about this just below.
In any case, define $\calT = \calT_{\psi}$ by 
\[
\calT(a_0, b_0) = \overline{d_1} a_0 - c_1 \overline{b_0}.
\]
This maps pairs $(a_0, b_0)$ to $\CC$-valued functions $h$, but the reader should be aware that this 
is only a {\it real} operator, and not $\CC$-linear!  

A fact which is useful in applications is that if $\calT(a_0, b_0) = 0$, i.e.,
\[
\overline{d_1} a_0 = c_1 \overline{b_0},
\]
then the function
\[
\zeta =  a_0/c_1 = \overline{ b_0 /d_1} 
\]
is well-defined on all of $\Si$, by virtue of the nondegeneracy hypothesis. 

Here is the motivation for this definition. Suppose that $\dim Z_1 = 1$ and let $\psi$ be a nonzero element in $Z_1$. 
We have noted that $\psi$ is polyhomogeneous, with expansion starting with $r^{1/2}$.  The coefficients of this first term
are thus well-defined, and we write 
\[
\calB_1(\psi) = (c_1, d_1) \in \calC^\infty(\Si)^2.
\]
Next, suppose that $\zeta$ is a section of the normal bundle $N\Si$; this is a trivial rank $2$ bundle, and is spanned 
in Fermi coordinates by coordinate vector fields $\del_{x_1}$ and $\del_{x_2}$. It is often helpful to write $z = x_1 + i x_2$
and think of $\zeta$ as being complex-valued. 

Any such section is an infinitesimal deformation of $\Si$. The deformation theory of $\Z_2$ harmonic spinors involves deforming
both $\Si$ and $\psi$.  A brief calculation shows that if $\Si$ is perturbed in the direction $\zeta$, then the corresponding infinitesimal 
deformation of the $\Z_2$-harmonic spinor $\psi$ has the form 
\[
\calL_\zeta \psi := 
\begin{pmatrix}  
	c_1 \zeta /\sqrt{z} \\ d_1 \overline{\zeta}/\sqrt{\overline{z}} 
\end{pmatrix} 
	+ \calO(r^{1/2}) = 
\begin{pmatrix}
	c_1\zeta e^{-i\theta/2} \\ d_1\bar\zeta e^{i\theta/2} 
\end{pmatrix} r^{-1/2} 
	+ \calO(r^{1/2}).
\]
We then seek some $\phi \in r H^1_e$ so that $\calL_\zeta \psi + \phi = u \in Z_0$. If this is possible, then at the infinitesimal level, one 
can compensate for the infinite dimensional cokernel $Z_0$ of $\Dirac: r H^1_e \to L^2$. Since this correction term $\phi$ does not 
affect the $r^{-1/2}$ coefficient, this is tantamount to asking whether a given pair $( c_1\zeta , d_1\bar\zeta)$ lies in the Calderon subspace $\Lambda_{\Dirac}$.   

Now note that the boundary operator $\calT_{\psi}$ satisfies 
\[
\calT_\psi ( c_1 \zeta, d_1 \overline{\zeta}) = \overline{d_1} \, ( c_1 \zeta) - c_1 \, \overline{ (d_1 \overline{\zeta})} = 0
\]
for every $\zeta$.  Conversely, the only pairs killed by $\calT_{\psi}$ necessarily have the form  $( c_1 \zeta, d_1 
\overline{\zeta})$ for some $\zeta$.   In particular, if $(c_1, d_1)$ are the leading coefficients $\calB_1(\psi)$ for $\psi \in Z_1$,
then a pair $(a_0, b_0) \in \mathrm{ker}\, (\calT_{\psi})$ must have the form $(c_1 \zeta, d_1 \overline{\zeta})$. Hence, by nondegeneracy, 
we can reconstruct the normal perturbation $\zeta$ of $\Si$.

We now return to the slightly more general setting where $\psi = (c_1, d_1)$ does not necessarily arise from a $\Z_2$ harmonic spinor. For simplicity, 
we abbreviate $\calT_{\psi}\circ \calB_0$ by simply writing it as $\calT_\psi$, i.e. we automatically assume that the boundary trace operator has been 
applied before $\calT_\psi$. 
\begin{proposition}
Assuming that the pair $(c_1, d_1)$ is nondegenerate, the boundary problem $(\Dirac, \calT_{\psi})$ is elliptic and self-adoint. 
\end{proposition}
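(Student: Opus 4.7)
The strategy is to reduce both claims to a single structural fact: that $W \cap \Lambda_{\Dirac}$ is a Lagrangian subspace of $\Lambda_{\Dirac}$ with respect to the symplectic form $\omega$ of Corollary~\ref{cor:bpairing}, where $W := \ker \calT_{\psi}$ as a subspace of $H^{-1/2}(\Si;\CC)\oplus H^{-1/2}(\Si;\CC)$.

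The first step is to identify $W$ explicitly. By the nondegeneracy of $(c_1, d_1)$, the equation $\bar d_1 a_0 - c_1 \bar b_0 = 0$ is equivalent to the existence of a single complex-valued function $\zeta$ with $(a_0, b_0) = (c_1\zeta,\, d_1\bar\zeta)$: take $\zeta := a_0/c_1$ where $c_1\neq 0$ and $\zeta := \overline{b_0/d_1}$ elsewhere; these agree on the overlap. Isotropy of $W$ under $\omega$ is then a short direct computation: the integrand of $\omega\bigl((c_1\zeta, d_1\bar\zeta),(c_1\tilde\zeta, d_1\bar{\tilde\zeta})\bigr)$ equals $c_1\bar d_1\,\zeta\tilde\zeta + \bar c_1 d_1\,\overline{\zeta\tilde\zeta}=2\,\mathrm{Re}(c_1\bar d_1\,\zeta\tilde\zeta)$, which is real, so $\omega$ vanishes.

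The maximality half of the Lagrangian property is the main computation. Using the identity $\mathrm{Im}(B\bar\zeta)=-\mathrm{Im}(\bar B\zeta)$, the general pairing of $(a_0, b_0)\in\Lambda_{\Dirac}$ against an arbitrary element of $W$ rearranges to
\[
\omega\bigl((a_0, b_0),\,(c_1\zeta, d_1\bar\zeta)\bigr) \;=\; -\,\mathrm{Im}\int_{\Si}\bigl(\bar d_1 a_0 - c_1\bar b_0\bigr)\,\zeta.
\]
Requiring this to vanish for every smooth complex test function $\zeta$ forces $\bar d_1 a_0 - c_1\bar b_0 = 0$, i.e.\ $\calT_{\psi}(a_0, b_0)=0$. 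Thus the $\omega$-annihilator of $W\cap \Lambda_{\Dirac}$ inside $\Lambda_{\Dirac}$ equals $W\cap \Lambda_{\Dirac}$ itself, which is the Lagrangian property.

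With this established, self-adjointness of $(\Dirac, \calT_{\psi})$ follows from the standard principle that the restriction of a symmetric operator to a domain cut out by a Lagrangian subspace of its Cauchy data is self-adjoint, with the boundary term of Green's formula being precisely $\omega$ via Corollary~\ref{cor:bpairing}. For ellipticity I verify the symbol-level Lopatinski--Shapiro condition by restricting $\calT_{\psi}$ to the one-complex-dimensional symbolic Calderon subspace $\CC\cdot(1,-i\heta)$ spanned by the explicit $L^2$-kernel of the model Bessel operator (Proposition~\ref{Besselsoln}); this yields the $\RR$-linear map $\zeta\mapsto \bar d_1\zeta - i\heta\, c_1\bar\zeta$, whose Fredholm-symbolic properties, combined with the Lagrangian structure proved above, produce the required Fredholm boundary property within the edge calculus framework. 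The main obstacle I anticipate is rigorizing the density argument in the maximality step: since $(a_0, b_0)$ lies a priori only in $H^{-1/2}\oplus H^{-1/2}$, the pairing against $\zeta$ must be interpreted as the $H^{-1/2}$--$H^{1/2}$ duality, and one must check that smooth test functions $\zeta$ form a norming family for this pairing against elements of the Calderon subspace.
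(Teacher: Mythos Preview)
Your self-adjointness argument is essentially correct and in fact packages the computation more cleanly than the paper does: the identity
\[
\omega\bigl((a_0,b_0),(c_1\zeta,d_1\bar\zeta)\bigr)=-\,\mathrm{Im}\int_{\Si}(\bar d_1 a_0 - c_1\bar b_0)\,\zeta
\]
immediately reduces maximality to a standard density argument, whereas the paper tests separately with $\zeta$ real and $\zeta$ purely imaginary and then manipulates the resulting two real identities \eqref{Im1}, \eqref{Re1}. However, your framing is confused: $\Lambda_{\Dirac}$ is itself Lagrangian in the full boundary space (Corollary~\ref{cor:bpairing}), so $\omega$ vanishes identically on $\Lambda_{\Dirac}$ and the phrase ``$W\cap\Lambda_{\Dirac}$ is Lagrangian in $\Lambda_{\Dirac}$'' is vacuous. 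What your computations actually establish, and what is needed, is that $W=\ker\calT_\psi$ is Lagrangian in the full boundary space; this is how the paper frames it as well.

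Your ellipticity argument, by contrast, has a genuine gap. The pointwise symbolic map you write down, $\zeta\mapsto\bar d_1\zeta-i\heta\, c_1\bar\zeta$, is an $\RR$-linear endomorphism of $\CC$ with real determinant $|d_1|^2-|c_1|^2$, which can vanish under the nondegeneracy hypothesis (take $|c_1|=|d_1|\neq 0$). The missing ingredient is that $\calT_\psi$ involves complex conjugation, and conjugation does not commute with passage to the symbol in the tangential Fourier variable: it anticommutes with the Hilbert transform, so at the symbol level one must use the rule $\mathbf{c}\,\heta=-\heta\,\mathbf{c}$ rather than treating $\heta$ as a frozen scalar. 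The paper carries this out explicitly by regarding $\calT_\psi$ as a real operator with symbol $(\,\overline{d_1}\otimes I,\ -c_1\otimes\mathbf{c}\,)$, composing with $(1,-i\heta)^T$ while respecting the anticommutation, and then checking invertibility by multiplying on the right by $d_1-i\heta\, c_1\,\mathbf{c}$; the cross terms cancel and the result is $|c_1|^2+|d_1|^2$, which is the correct nonvanishing condition. Your appeal to ``Fredholm-symbolic properties, combined with the Lagrangian structure'' does not substitute for this computation: the Lagrangian property gives self-adjointness, not ellipticity, and the two are logically independent here.
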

\begin{proof}
We first explain what it means for a boundary problem to be elliptic. We attempt to solve, at least up to finite rank errors,
the problem $\Dirac u = f \in L^2$, $\calT_{\psi} u = h \in H^{-1/2}$. This is done in two steps. In the first, using the essential 
surjectivity of $\Dirac: H^1_e \to r^{-1}L^2$ as described in \autoref{Dmapprop}, we choose $v \in H^1_e$  such that $\Dirac v = f$ modulo $Z_1$.  In the second, this 
initial solution is modified by an element $w \in Z_0$ chosen so that $u = v-w$ solves the boundary condition. Obviously 
$\Dirac (v-w) = f - 0 = f$ modulo $Z_1$, and $\calB_0(u) = \calB_0(v) - \calB_0(w)$. Thus we must choose $w$ so that
\[
\calT_{\psi} w = \calT_{\psi} v - h. 
\]
If we can show that $\calT_{\psi}: \Lambda_{\Dirac} \to H^{-1/2}$ is Fredholm, then this is solvable modulo finite rank errors.

As noted earlier, Calderon proved that this condition is equivalent to a simpler finite dimensional statement involving the symbols of 
$\Dirac$ and $\calT_\psi$: the symbol of $\calT_\psi$ is bijective from the graph of the symbol of $\calN$. 

To state this finite dimensional criterion, recall that the symbol of $\calN$ is 
\[
\sigma_0(\calN)(y, \eta) = \begin{pmatrix} 1 \\ -i \heta \end{pmatrix}.
\]
On the other hand, consider $\calT_\psi$ as a real operator acting on the real and imaginary parts of $(a_0, b_0)$. As such, its symbol equals
\[
\sigma_0(\calT_\psi) =  \begin{pmatrix}  \overline{d_1} \otimes I  &   -c_1 \otimes {\bf c} \end{pmatrix},
\]
where $\bf c = \begin{pmatrix} 1 & 0 \\ 0 & -1 \end{pmatrix}$ corresponds to complex conjugation.   Thus to check ellipticity, 
we must analyze
\begin{equation}
\sigma(\calT_\psi) \circ \sigma(\calN) =  \begin{pmatrix}  \overline{d_1}  &   -c_1 {\bf c} \end{pmatrix} \,  \begin{pmatrix}  1 \\ -i \heta \end{pmatrix} = 
\overline{d_1} - c_1 {\bf c} (-i \heta).
\label{bsymb}
\end{equation}

\medskip

\noindent {\bf Claim:} This boundary operator $\calT_\psi$ is elliptic if and only if $|c_1|^2 + |d_1|^2$ is nowhere vanishing.

\medskip

Ellipticity is equivalent to showing that \eqref{bsymb} is invertible for every $y \in S^1$ and $\heta \in \{\pm 1\}$.   We prove this 
by showing that the composition of this endomorphism with the endomorphism $d_1 - i c_1 \heta {\bf c}$ is invertible. 

Recall first that we are regarding these objects as symbols, and as such they must obey the symbolic
version of the relationship
\[
\calH \circ {\bf c} = - {\bf c} \circ \calH
\]
acting on functions on $\RR$. This anticommutativity is straightforward to check from the basic definitions,
and uses the fact that if $\calF$ denotes Fourier transform, and $R$ the reflection map $Rf(y) = f(-y)$, then
$\calF \circ {\bf c} = R \circ {\bf c} \circ \calF$. The symbolic version of this statement is that
\[
{\bf c} ( \heta) = - \heta \, {\bf c}. 
\]

We can now compute
\[
\begin{aligned}
& \left(\overline{d_1} - c_1 {\bf c}( -i \heta) \right) \left( d_1 -i \heta c_1 {\bf c} \right) =  |d_1|^2 - i \heta \overline{d_1} c_1 {\bf c}
- c_1 {\bf c} ( -i \heta d_1)  + c_1 {\bf c}( -i \heta (-i c_1 {\bf c} )  \\
& = |d_1|^2 - i \heta \overline{d_1} c_1 {\bf c} + c_1 i \heta \overline{d_1} {\bf c}   + |c_1|^2  = |c_1|^2 + |d_1|^2.
\end{aligned}
\]
This proves the claim.

We now turn to proving the self-adjointness of $(\Dirac, \calT_\psi)$. Recall first the definition of the 
Hilbert space adjoint of the closed, unbounded operator $\Dirac$ on $L^2$ with domain
\[
\calD_{\psi} := \big \{ u \in L^2\mid  \Dirac u \in L^2,\ \mbox{and}\ \calT_{\psi} u = 0\big\}.
\]
By definition, the adjoint domain is defined by
\[
\calD_{\psi}^* = \Bigg\{v \in L^2\;\Big |\;  \mbox{for all $u \in \calD_{\psi}$, there exists some}\ C > 0\ \mbox{such that}\ 
\left| \int u \cdot \Dirac v \right| \leq C ||v||_{L^2} \Bigg\}.
\]
To understand this more concretely, observe that
\[
\int u \cdot \Dirac v = \int \Dirac u \cdot v - \omega\big ( \calB_0(u), \calB_0(v)\big ),
\]
so it follows that
\[
\calD_{\psi}^* = \big \{v \in L^2\mid \Dirac v \in L^2\ \mbox{and}\ \omega\big ( \calB_0(u), \calB_0(v)\big ) = 0\ \mbox{for all}\ u \in \calD_{\psi}\big \}.
\]

Consider, then, any pair $(a_0, b_0)$ (which we may as well take to be smooth) which satisfies
\[
\calT_{\psi} (a_0, b_0) = \overline{d_1} a_0 - c_1 \overline{b_0} = 0.
\]
Let $\calB_0(v) = (\tilde{a}_0, \tilde{b}_0)$.  We wish to show that if 
\[
\Im \int_\Si  a_0 \overline{\tilde{b}_0} + b_0 \overline{\tilde{a}_0} = 0
\]
for all such pairs $(a_0, b_0)$, then $\calT_{\psi} (\tilde{a}_0, \tilde{b}_0) = 0$, and conversely, if $\calT_{\psi}(\tilde{a}_0, \tilde{b}_0) = 0$, then this boundary 
pairing vanishes. Recall that if $\calT_\psi(a_0, b_0) = 0$, then there exists some complex-valued function $\zeta$ so that $a_0 = c_1 \zeta$, 
$b_0 = d_1 \overline{\zeta}$.  In terms of this,
\[
\omega\big( (a_0, b_0), (\tilde{a}_0, \tilde{b}_0)\big) = - \Im\, \int_\Si  c_1 \, \zeta \, \overline{\tilde{b}_0} + d_1 \, \overline{\zeta} \, \overline{\tilde{a}_0}.
\]
First suppose that $\zeta = f$ is real-valued.  Then
\[
\Im \, \int_\Si  f (c_1 \overline{\tilde{b}_0} + d_1  \overline{\tilde{a}_0}) = 0
\]
for all $f$, hence 
\begin{equation}
\Im \, (c_1 \overline{\tilde{b}_0} + d_1  \overline{\tilde{a}_0}) \equiv 0.
\label{Im1}
\end{equation}
If $\zeta = if$ is purely imaginary, then
\[
\Im \, \int_\Si if (c_1 \overline{\tilde{b}_0} - d_1 \overline{\tilde{a}_0}) = 0,
\]
again for all $f$. This implies that
\begin{equation}
\Re \, (c_1 \overline{\tilde{b}_0} - d_1 \overline{\tilde{a}_0}) = 0.
\label{Re1}
\end{equation}

We claim that \eqref{Im1} and \eqref{Re1} together imply that $\calT_{\psi}( \tilde{a}_0, \tilde{b}_0) = 0$. Indeed, write
$c_1 \overline{\tilde{b}_0} - \overline{d_1} \tilde{a}_0  = A$, so $\overline{d_1} \tilde{a}_0 = c_1 \overline{\tilde{b}_0} - A$. Inserting this
into these two equations gives
\[
\Im\, ( c_1 \overline{\tilde{b}_0}  + \overline{c_1} \tilde{b}_0 - A) = 0, \qquad \Re\,( c_1 \overline{\tilde{b}_0} - \overline{c_1} \tilde{b}_0 - A) = 0.
\]
However, if $w$ is any complex number, $\Im (w + \overline{w}) = 0$ and $\Re ( w - \overline{w}) = 0$, so these equations
reduce to $\Re\, A = \Im \, A = 0$, or finally $A = 0$.  
Thus $\calT_{\psi}(\tilde{a}_0, \tilde{b}_0) = 0$, and finally, $\calD_{\psi}^* = \calD_{\psi}$, as desired.
\end{proof}

The ellipticity of this pair $(\Dirac, \calT_{\psi})$ now implies, by the main theorem of \cite{MaVe}, that the boundary problem
\[
\Dirac: \calD_{\psi} \longrightarrow L^2
\]
is Fredholm. 

\section{Analysis of $\Dirac$ when $\Si$ is an admissible graph}
We now turn to the case where $\Si = \calP \cup \calE$ is an admissible graph. Certain parts of the previous theory are local, and hence they
carry over to the interiors of each edge. We shall bring these in as needed.  One of these we mention right away, although it will
not be needed until later. This is the fact that there is still a boundary trace map 
\[
\calB_0: L^2(M; \slS \otimes \calI) \longrightarrow \bigsqcup_{e \in \calE} H^{-1/2}_{\mathrm{loc}}( \mathrm{int}\, e; \CC)^2.
\]

The main new step is to analyze $\Dirac$ near each vertex $q \in \calP$. This proceeds by a study of the
model problem, i.e., the Dirac operator on the exact metric cone $C(S_q)$. Here $S_q$ is the boundary face of $M_\Si$ 
created by blowing up $q$; recalling that the edges of $\Si$ are also blown up in $M_\Si$, $S_q$ is a surface with
boundary obtained by blowing up the $2$-sphere around $2k$ punctures, where $2k = 2k_q$ is the valence of that vertex. 
For simplicity we drop the subscript $q$ and restrict to a neighborhood $\calU$ of $S$ in $M_\Si$.

There exists a spherical coordinate system $(\rho, \omega)$ in $\calU$ where each edge $e_j$ which abuts $q$ corresponds
to a ray $\{\omega = p_j \in S^2,\ \rho \geq 0\}$.  This fixed $2k$ points $\{p_1, \ldots, p_{2k}\}$ on the normal sphere at $q$
which we call the punctures, and hence $2k$ different boundary circles of $S$.  If the background metric is flat in $\calU$, then 
$\Dirac$ takes the particularly simple form
\begin{equation}
\Dirac  = \mathtt{cl}(\del_\rho) \left( (\del_\rho + \frac1{\rho}) - \frac{1}{\rho}\widehat{\Dirac}\right),
\label{Diracsphcoor}
\end{equation}
where $\widehat{\Dirac}$ is the induced Dirac operator on the (punctured) sphere.  For more general metrics, this local
coordinate expression for $\Dirac$ is the sum of this model operator in \eqref{Diracsphcoor} and an error term $E$ which 
is lower order as $\rho \to 0$. Since our eventual goal is to compute an index, it is sufficient to study the exact model 
conic operator, which avoids some minor technical issues. 

\subsection{The twisted Dirac operator $\widehat{\Dirac}$ on a punctured sphere} 
The expression in \eqref{Diracsphcoor} is that of an elliptic conic operator, and as for any such operator, its analysis 
draws on the analytic properties of the `tangential', or cross-sectional, operator $\widehat{\Dirac}$, which acts on
the sections of the restriction of the twisted spin bundle to the punctured $2$-sphere. 
A key goal is to show that this operator, with boundary conditions at the boundary 
circles induced from the boundary operator $\calT_\psi$ along the incoming edges, is self-adjoint with discrete spectrum.

We begin by noting that $\widehat{\Dirac}$ is a familiar object. Clifford multiplication by $\mathtt{cl}(\del_\rho)$ 
splits $\slS$ as $\slS^+ \oplus \slS^-$.  In this low dimension, these are identified with $K^{1/2}$ and ${\overline K}^{1/2}$, 
respectively, where $K$ is the canonical bundle of the $2$-sphere.  Using these identifications, 
\[
\widehat{\Dirac} = \begin{pmatrix} 0 &  - \del \\ \overline{\del} & 0 \end{pmatrix}
\]
acting on sections of $(K^{1/2} \otimes \calI) \oplus (\overline{K}^{1/2} \otimes \calI)$.  One way to view the twist by $\calI$ is that we are  
letting $\widehat{\Dirac}$ act on untwisted spinors on the double cover of $S^2$ ramified at the $q_j$, with the lifted metric; in other words, 
we can also regard $\widehat{\Dirac}$ as the untwisted Dirac operator on a space with conic singularities with cone angle $4\pi$ and with
an involution $\tau$, acting on ordinary spinors which are odd with respect to this involution. 
Denote this double-cover of $S^2$, ramified at the $q_j$, by $Y$. It is a smooth surface of genus $k-1$. The bundle $K^{1/2}_S\otimes \calI$ 
lifts simply to $K^{1/2}_Y$.   This description is useful in understanding the nullspace of $\widehat{\Dirac}$. 

Near each $q_j$, $\widehat{\Dirac}$ itself is conic (this is why we call $\Dirac$ on $M\setminus \Si$ an `iterated conic operator').  
The twisting bundle $\calI$ restricts to a flat $\RR$-bundle on $S^2 \setminus \{q_1, \ldots, q_{2k}\}$ with monodromy $-1$ around 
each $q_j$, which is possible by the admissibility hypothesis. 

In polar coordinates around $q_j$, 
\[
\widehat{\Dirac} = \mathtt{cl}(\del_r) + \frac{1}{\sin r} \mathtt{cl}(\del_\theta) + \mbox{higher order terms}.
\]
In the following, we first describe the local asymptotic behavior near $q_j$ of solutions to $\widehat{\Dirac} v = 0$, and
then use this to describe the minimal and maximal domains of $\widehat{\Dirac}$, as well as two important domains
on which this operator is self-adjoint. 

\medskip

\subsection{Indicial roots and regularity}
Fix a singular point $q_j$, and the corresponding boundary circle $C_j \subset \del S$. We define the indicial roots of $\widehat{\Dirac}$ at $C_j$ 
as before: $\lambda$ is an indicial root if there exists a section $w(\theta)$ such that $\widehat{\Dirac}( r^\lambda w(\theta)) = \calO(r^\lambda)$
(rather than the expected rate $\calO(r^{\lambda-1})$), i.e., there is a cancellation of leading order terms. The calculations here are 
{\it precisely} the same as in the computation of indicial roots of $\Dirac$ at the edges.  The result is that the set of indicial roots of
$\widehat{\Dirac}$ at each $q_j$ equals $\ZZ + \frac12$, and the corresponding (approximate) solutions 
are $\begin{pmatrix} e^{i(k-\frac12)\theta} \\ e^{i(k+\frac12)\theta} \end{pmatrix} r^{(k-1/2)}$.  In this two-dimensional setting, there is
no analogue of the normal operator.  We index these indicial roots by 
\[
\lambda_k = k - \frac12.
\]

Regularity for solutions of elliptic conic operators is much simpler than for edge operators; in particular, nullspace elements
are always polyhomogeneous.
\begin{proposition}
Any solution of $\widehat{\Dirac} v = 0$, or more generally of $\widehat{\Dirac} v = \nu v$, for any $\nu \in \RR$, with
$|v| \leq C r^{-N}$ for some $N$ at each $q_j$, is necessarily polyhomogeneous, with 
\[
v \sim  \sum_{k \geq k_0} \sum_{\ell = 0}^{k-k_0} r^{k-1/2}(\log r)^\ell \begin{pmatrix} e^{(k-1/2)i\theta} \hat{a}_{k,\ell}^{j} \\ e^{(k+1/2)i\theta} \hat{b}_{k,\ell}^{j} \end{pmatrix}
\]
for the integer $k_0$ such that $-N \leq k_0 < -N+1$.  If $v \in L^2$ then $k_0 \geq 0$.   
\label{conicregularity}
\end{proposition}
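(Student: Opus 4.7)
My plan is to combine interior elliptic regularity with Mellin analysis at each puncture, following the standard regularity theory for elliptic conic operators.

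First, since $\widehat{\Dirac}-\nu$ is elliptic on $S \setminus \{q_1,\ldots,q_{2k}\}$, ordinary interior elliptic regularity gives $v \in C^{\infty}$ away from the punctures. The question thus reduces to establishing the claimed asymptotic expansion locally near a single $q_j$.

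Fix such a puncture and work in polar coordinates $(r,\theta)$. Decompose $\widehat{\Dirac} = \widehat{\Dirac}_0 + rE$, where $\widehat{\Dirac}_0$ is the exact conic model whose coefficients are independent of $r$ in the appropriate sense, and $rE$ is a smooth combination of $\partial_r$ and $r^{-1}\partial_\theta$ with coefficients vanishing at $r=0$. The equation becomes $\widehat{\Dirac}_0 v = -rEv + \nu v$, whose right-hand side is a genuine conic perturbation of lower order. I then apply the Mellin transform in $r$: the polynomial-growth hypothesis $|v|\le Cr^{-N}$ guarantees that a cutoff Mellin transform $\widehat{v}(\tau,\theta)$ is holomorphic in a right half-plane $\Re\tau > -N$. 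On the Mellin side, $\widehat{\Dirac}_0$ becomes a meromorphic family of elliptic first-order operators on $S^1_\theta$, equipped with the antiperiodic boundary condition imposed by $\calI$. This indicial family is invertible precisely for $\tau \notin \ZZ - \tfrac12$, with simple kernels at each $\lambda_k = k-\tfrac12$ spanned by the indicial monomials $\bigl(e^{i(k-1/2)\theta},\, e^{i(k+1/2)\theta}\bigr)^{T}$.

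The polyhomogeneous expansion is then extracted by shifting the Mellin contour to the left and collecting residues at each indicial root crossed. Each such residue contributes a term $r^{k-1/2}$ multiplied by a polynomial in $\log r$ whose coefficients are smooth sections on $S^1$. Logarithmic factors arise because the perturbation $rE$ and the zeroth-order term $\nu$ couple the residue at $\lambda_k$ back into the kernel at $\lambda_{k'}$ for $k' > k$; as the indicial roots are integer-spaced and each such passage through a simple pole of the indicial inverse raises the power of $\log r$ by at most one, an induction on $k - k_0$, anchored at the leading pole $\lambda_{k_0}$ (which is simple and hence log-free), delivers the bound $\ell \le k-k_0$. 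The $L^2$ assertion $k_0\ge 0$ is then immediate, since $r^{k_0-1/2}(\log r)^\ell$ belongs to $L^2(r\,dr\,d\theta)$ near $r=0$ only when $2(k_0-\tfrac12)+1 > -1$.

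The main technical obstacle is the precise bookkeeping of residues and log multiplicities under the contour shift, or equivalently the iterative construction of a formal polyhomogeneous solution modulo successively higher-order conic errors. This is a well-developed step in the b-calculus, but the one point requiring care is to check that the $\calI$-antiperiodicity is correctly propagated into the indicial family at every stage of the iteration, so that the indicial root set remains exactly $\ZZ-\tfrac12$ and no unexpected resonance weights are introduced.
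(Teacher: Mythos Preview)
The paper does not actually supply a proof of this proposition; it is stated as a standard fact about elliptic conic operators, with the preceding sentence noting that ``regularity for solutions of elliptic conic operators is much simpler than for edge operators; in particular, nullspace elements are always polyhomogeneous.'' Your Mellin-transform argument is precisely the standard mechanism behind this fact, and indeed the paper carries out exactly this argument later (Section~4.6) for the analogous three-dimensional conic expansion near a vertex: cut off, Mellin transform, invert the indicial family, and shift the contour to collect residues. So your approach is correct and is the intended one.

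One small remark on your bookkeeping: you describe the Mellin transform as holomorphic for $\Re\tau > -N$ and then speak of shifting the contour \emph{left}. With the usual convention (as in the paper's later proof, where $u_M(\zeta) = \int \rho^{-i\zeta-1}u\,d\rho$ is holomorphic in the \emph{upper} half-plane and one shifts \emph{down}), the direction of the shift is toward the region where the indicial inverse acquires poles, not away from it. Whatever sign convention you adopt, make sure it is internally consistent: the half-plane of initial holomorphy is determined by the growth bound $|v|\le Cr^{-N}$, and the poles of $(\text{indicial operator})^{-1}$ lie on the \emph{other} side, so the contour shift that produces the expansion must cross them. Your final paragraph already flags this as the step requiring care, which is appropriate.
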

For simplicity, we denote the leading coefficient pair of a $\Z_2-$ harmonic spinor on $S$ by $(\hat{a}^{j}_{k_0},\hat{b}^{j}_{k_0})$. (A 
short calculation shows that since this expansion solves the equation formally, the leading term cannot have a logarithmic factor.)  
The main cases below and when $k_0 = 0$ or $1$. 

This pair of leading asymptotic coefficients is the boundary trace of $v$, and we write
\[
\widehat{ \calB}_{0}: v \longmapsto \big((\hat{a}^j_{0}, \hat{b}^j_{0})\big)_{j=1}^{2k},
\]
and $\widehat{ \calB}_{0}^j$ for the pair $(\hat{a}^j_0,  \hat{b}^j_0)$. 

\subsection{Mapping properties}  
We first consider the map 
\begin{equation}
\widehat{\Dirac}:  r^{\mu} H^1_b(S, \slS \otimes \calI; r drd\theta) \longrightarrow r^{\mu-1}L^2(S, \slS \otimes \calI; r dr d\theta)
\label{DSwss}
\end{equation}
between weighted $b$ Sobolev spaces.  (We recall the definition of $r^\mu H^1_b$ which appeared in Remark \ref{defH1b}.) 

As in \eqref{indrindw}, $r^\lambda w(\theta) \in r^\mu L^2$ (for $w \in \calC^\infty$)  in $r \leq 1$ if and only if $\mu < \lambda + 1$.
Thus for each indicial root $\lambda_k = k - \frac12$ we associate the indicial weight $\mu_k = k + \frac12$. 

\begin{proposition}
The map \eqref{DSwss} is Fredholm if and only if $\mu \neq \mu_k$ for any $k \in \ZZ$.   Assuming that $\mu$ is nonindicial,
the nullspace and cokernel of \eqref{DSwss} are naturally identified with the cokernel and nullspace, respectively, of 
\[
\widehat{\Dirac}: r^{1-\mu}H^1_b(S, \slS \otimes \calI; r dr d\theta) \longrightarrow r^{-\mu} L^2(S, \slS \otimes \calI; r dr d\theta).
\]
\label{dualityprop}
\end{proposition}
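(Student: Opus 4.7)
The plan is to follow the same template used above for the model Bessel operator $B(\Dirac)$, but now in the compact conic setting on the surface $S$. This is a standard application of the $b$-calculus on a compact manifold with boundary, and the proof splits into three parts: a parametrix construction at nonindicial weights, a Weyl sequence argument at indicial weights, and a formal self-adjointness identification for the duality statement.

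First, for the Fredholm property, I would note that near each boundary circle $C_j$ corresponding to the puncture $q_j$, the operator $r\widehat{\Dirac}$ is a first-order elliptic $b$-operator whose indicial family $\lambda\mapsto I(\widehat{\Dirac},\lambda)$ is a holomorphic family of elliptic operators on $S^1$, with inverse meromorphic and poles exactly at the indicial roots $\lambda_k=k-\tfrac12$. Taking Mellin transform in $r$ along the contour $\mathrm{Re}\,\lambda=\tfrac12-\mu$, which lies in a pole-free strip precisely when $\mu$ is nonindicial, produces a local parametrix near each $q_j$. Patching these local parametrices with a standard interior elliptic parametrix for $\widehat{\Dirac}$ (a nondegenerate elliptic operator away from the $q_j$) yields an operator $\widehat{G}$ with $\widehat{\Dirac}\widehat{G}-I$ and $\widehat{G}\widehat{\Dirac}-I$ compact between the relevant weighted $b$-Sobolev spaces, and hence the Fredholm conclusion.

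Second, at an indicial weight $\mu=\mu_k=k+\tfrac12$, I would exhibit a Weyl sequence of exactly the form used earlier for $B(\Dirac)$. Localizing near one puncture $q_j$ and using the indicial monomial $r^{\lambda_k}e^{i(k-\tfrac12)\theta}$, set
\[
\psi_\ell(r,\theta)=\chi_\ell(r)\,r^{\lambda_k}\,\begin{pmatrix} e^{i(k-\tfrac12)\theta} \\ 0 \end{pmatrix},
\]
where $\chi_\ell(r)=\tilde\chi(\log r+\ell)$ is a logarithmic cutoff supported in $[2^{-\ell-1},2^{-\ell+1}]$. A direct computation, identical in spirit to the one proving Proposition~\ref{BDmapprop}, shows $\|\psi_\ell\|_{r^{\mu_k}L^2}\asymp 1$ while $\|\widehat{\Dirac}\psi_\ell\|_{r^{\mu_k-1}L^2}\to 0$. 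The pairwise disjoint supports prevent any subsequential convergence, so the range is not closed.

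Third, for the duality, I would invoke the formal self-adjointness of $\widehat{\Dirac}$ with respect to the $L^2(rdrd\theta)$ inner product on smooth compactly supported sections. Under this unweighted pairing the dual of $r^{\mu-1}L^2(rdrd\theta)$ is naturally $r^{1-\mu}L^2(rdrd\theta)$, and the dual of $r^\mu H^1_b$ contains $r^{-\mu}L^2$ with the expected continuous inclusion. Hence the Banach space adjoint of the Fredholm map $\widehat{\Dirac}\colon r^\mu H^1_b\to r^{\mu-1}L^2$ is realized by $\widehat{\Dirac}\colon r^{1-\mu}H^1_b\to r^{-\mu}L^2$, and the Fredholm alternative gives the identifications of nullspace with dual cokernel, and vice versa. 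The main obstacle is verifying that the parametrix built from the meromorphic Mellin inverse actually produces compact remainders on these particular weighted $b$-Sobolev spaces and that it respects the $L^2(rdrd\theta)$-duality; these are standard technical points in the $b$/edge calculus literature cited earlier, and I would import them directly from \cite{Mazzeo91_EllTheoryOfDiffEdgeOp} and \cite{MaVe}.
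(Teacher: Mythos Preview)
Your proposal is correct and follows essentially the same approach as the paper. The paper's own proof is extremely brief: it cites the Fredholm criterion as ``a standard result about elliptic $b$ operators'' (which is precisely what your Mellin-parametrix construction at nonindicial weights and Weyl-sequence argument at indicial weights unpack), and establishes the duality via the same unweighted $L^2$-pairing $\langle \widehat{\Dirac} v, w\rangle = \langle v, \widehat{\Dirac} w\rangle$ on $r^\mu H^1_b \times r^{1-\mu} H^1_b$ that you invoke.
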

\begin{proof}
The first assertion is a standard result about elliptic $b$ operators. The remaining part is proved using the pairing
\[
r^\mu H^1_b \times r^{1-\mu} H^1_b \ni  (v, w) \longmapsto \langle \widehat{\Dirac} v, w \rangle = \langle v, \widehat{\Dirac} w\rangle,
\]
where $\langle \cdot, \cdot \rangle$ is the usual $L^2$ pairing extended to $r^{\mu-1}L^2 \times r^{1-\mu}L^2$.
\end{proof}
For any nonindicial weight $\mu$, write $K_{\mu}$ and $C_\mu$ for the dimensions of the kernel and cokernel of \eqref{DSwss}, 
and $\Ind(\mu)$ for its index. Using this duality, we have 
\begin{corollary} 
The index satisfies $\Ind(1-\mu) = - \Ind(\mu)$. 
\end{corollary}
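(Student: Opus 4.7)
The corollary is essentially a bookkeeping consequence of Proposition~\ref{dualityprop}, so I would not expect any real obstruction; the bulk of the work has already been done in establishing the duality statement.

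My plan is to unpack the definitions directly. Let $K_\mu$ denote the dimension of the nullspace of $\widehat{\Dirac}: r^\mu H^1_b \to r^{\mu-1}L^2$ and $C_\mu$ the dimension of its cokernel. The duality assertion in Proposition~\ref{dualityprop} produces two natural isomorphisms,
\[
\ker\bigl(\widehat{\Dirac}|_{r^\mu H^1_b}\bigr) \;\cong\; \mathrm{coker}\bigl(\widehat{\Dirac}|_{r^{1-\mu}H^1_b}\bigr), \qquad
\mathrm{coker}\bigl(\widehat{\Dirac}|_{r^\mu H^1_b}\bigr) \;\cong\; \ker\bigl(\widehat{\Dirac}|_{r^{1-\mu}H^1_b}\bigr),
\]
that is, $K_\mu = C_{1-\mu}$ and $C_\mu = K_{1-\mu}$. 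Subtracting these two equalities then gives
\[
\Ind(\mu) = K_\mu - C_\mu = C_{1-\mu} - K_{1-\mu} = -\Ind(1-\mu),
\]
which is the desired identity.

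The only point requiring slight care is that both sides of the identity should make sense simultaneously, i.e.\ that $\mu$ being nonindicial is equivalent to $1-\mu$ being nonindicial. But this is clear from the symmetry of the indicial set $\ZZ + \tfrac12$ under $\lambda \mapsto -\lambda$ (equivalently, $\mu_k = k + \tfrac12 \mapsto 1 - \mu_k = -k + \tfrac12 = \mu_{-k}$), so whenever the Fredholm hypothesis holds for one weight it holds for the other, and the corollary is proved.
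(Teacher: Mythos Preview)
Your argument is correct and is exactly what the paper intends: the corollary is stated without proof, as an immediate consequence of the kernel/cokernel identifications in Proposition~\ref{dualityprop}, and your unpacking $K_\mu = C_{1-\mu}$, $C_\mu = K_{1-\mu}$ is precisely that. The remark about the symmetry of the indicial set under $\mu \mapsto 1-\mu$ is a helpful addition but not strictly needed, since Proposition~\ref{dualityprop} already asserts Fredholmness for both weights simultaneously.
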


The map \eqref{DSwss} is not Fredholm when $\mu = 1/2$.  However, there is a relative index theorem which calculates 
the change of the index across this (or any other) indicial weight. 
\begin{proposition}
For any $0 < \epsilon < 1$, 
\[
\Ind\Big(k + \frac12 + \epsilon\Big) - \Ind\Big(k + \frac12 - \epsilon\Big) = -4k.
\]
\label{relindex}
\end{proposition}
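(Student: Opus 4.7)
The plan is to invoke the standard relative index theorem for elliptic $b$-type (conic) operators, in the form developed by Lockhart--McOwen and Melrose: as the weight $\mu$ crosses an indicial weight $\mu_0$ from below, the Fredholm index decreases by exactly the complex dimension of the generalized indicial space at $\mu_0$, counted with any logarithmic multiplicities. In the present situation, with $\mu_\pm = k + \tfrac12 \pm \epsilon$ and $\epsilon \in (0,1)$, only the single indicial weight $\mu = k + \tfrac12$ is crossed, so the reduction reads
\[
\Ind(\mu_+) - \Ind(\mu_-) = -\dim_{\CC} E(\mu_k),
\]
where $E(\mu_k)$ is the space of formal null solutions of $\widehat{\Dirac}$ of rate $r^{k-1/2}$ at the boundary components of $S$.

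A quick derivation of this reduction goes through the snake lemma. The inclusions $r^{\mu_+}H^1_b \hookrightarrow r^{\mu_-}H^1_b$ and $r^{\mu_+ - 1}L^2 \hookrightarrow r^{\mu_- - 1}L^2$ fit into a commutative diagram with $\widehat{\Dirac}$ acting on both rows. By the polyhomogeneity in Proposition~\ref{conicregularity} together with the parametrix for $\widehat{\Dirac}$ as a conic elliptic operator, the induced map on the quotients is essentially the indicial operator $I_{\lambda_k}(\widehat{\Dirac})$, which by definition vanishes identically on the indicial space at $\lambda_k = k - \tfrac12$. A diagram chase then produces the relative index formula above.

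Second, I would count the dimension of $E(\mu_k)$. By the explicit computation in Section~4.2, at each puncture $q_j$ of $S$ the indicial equation at $\lambda_k = k - \tfrac12$ admits formal solutions of the form
\[
\begin{pmatrix} \hat{a}^j_k\, e^{i(k - 1/2)\theta} \\ \hat{b}^j_k\, e^{i(k + 1/2)\theta} \end{pmatrix} r^{k-1/2},
\]
with the pair of leading coefficients $(\hat{a}^j_k, \hat{b}^j_k) \in \CC^2$ as free parameters; this is the expected two-dimensional complex indicial space of the two-component, first-order elliptic system $\widehat{\Dirac}$. Summing over the $2k$ punctures (here $k$ is the valence parameter $k_q$) gives $\dim_\CC E(\mu_k) = 2 \cdot 2k = 4k$, and the stated formula follows.

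The main obstacle will be ruling out logarithmic corrections in the generalized indicial space at $\lambda_k$, i.e.\ verifying that the indicial family has no Jordan blocks at this weight. One can arrange this by deforming the metric near each puncture to its exact flat conic model, a deformation that preserves the Fredholm index since it does not alter the indicial root structure. For the exact model the indicial family is diagonal in the angular Fourier decomposition, no Jordan block appears, and the count $\dim_\CC E(\mu_k) = 4k$ becomes rigorous.
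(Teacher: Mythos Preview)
Your proposal is correct and follows essentially the same approach as the paper: both invoke the relative index theorem for elliptic conic operators and compute the jump as the total multiplicity of the indicial root at the $2k$ punctures, namely $2\cdot 2k = 4k$. The paper simply cites \cite[Theorem 6.5]{Melrose93_AtiyahPatodiSingerBook} directly, whereas you supply additional justification (the snake-lemma sketch and the discussion of Jordan blocks); this extra care is not wrong but is more than the paper deems necessary, since Melrose's theorem already packages the dimension count and the absence of logarithmic corrections into the stated multiplicity.
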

\begin{proof}
This is \cite[Theorem 6.5]{Melrose93_AtiyahPatodiSingerBook}.  The complete statement is that the difference in indices equals the negative of the sum 
over all $q_j$ of the multiplicity of the indicial root at that puncture. In this setting there are $2k$ points $q_j$, and each 
indicial root has multiplicity $2$, so the relative index is $-4k$.
\end{proof}

Now recalling the duality above and specializing to $\ell = 0$, we compute that
\begin{align*}
-4k = \Ind\Big( \frac12 + \epsilon\Big) - \Ind\Big(\frac12 - \epsilon\Big) & = (\dim K_{\frac12 + \epsilon} - \dim C_{\frac12 + \epsilon}) - (\dim K_{\frac12-\epsilon} - \dim C_{\frac12 -\epsilon}) \\
& = 2 (\dim K_{\frac12 + \epsilon} - \dim C_{\frac12 + \epsilon}),
\end{align*}
hence
\begin{corollary}
\[
\Ind\Big(\frac12 + \epsilon\Big) = -2k, \quad \Ind\Big(\frac12 - \epsilon\Big) = 2k.
\]
This implies that the space of solutions asymptotic to $r^{-1/2}$ (which correspond to the indicial weight $1/2$),
but which do not decay faster, has dimension equal to $2k$.
\label{jumpind}
\end{corollary}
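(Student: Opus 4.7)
The strategy is to combine the relative index formula of Proposition~\ref{relindex}, applied at the indicial weight $\mu_0 = 1/2$, with the duality identity from Proposition~\ref{dualityprop}. First I would specialize Proposition~\ref{relindex} to the critical indicial weight $\mu = 1/2$ corresponding to the indicial root $\lambda_0 = -1/2$, obtaining
\[
\mathrm{Ind}(1/2 + \epsilon) - \mathrm{Ind}(1/2 - \epsilon) = -4k.
\]
Here $-4k$ is the negative of the total multiplicity of formal indicial solutions supported at this weight: the punctured sphere $S$ has $2k$ boundary punctures, and at each puncture the indicial root $-1/2$ contributes a $2$-dimensional space of formal solutions (corresponding to the two spinor components in the expansion from Proposition~\ref{conicregularity}).

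Next I would apply the duality relation $\mathrm{Ind}(1-\mu) = -\mathrm{Ind}(\mu)$ stated in the corollary following Proposition~\ref{dualityprop}. Taking $\mu = 1/2 + \epsilon$ gives $\mathrm{Ind}(1/2 - \epsilon) = -\mathrm{Ind}(1/2 + \epsilon)$; substituting this into the relative index equation yields $2\,\mathrm{Ind}(1/2 + \epsilon) = -4k$, from which the asserted values $\mathrm{Ind}(1/2 + \epsilon) = -2k$ and $\mathrm{Ind}(1/2 - \epsilon) = 2k$ follow immediately.

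For the concluding remark about solutions asymptotic to $r^{-1/2}$, I would argue as follows. As $\mu$ decreases through $1/2$, the domain $r^{\mu} H^1_b$ is enlarged near $r=0$, so $\dim K_\mu$ can only increase while $\dim C_\mu$ can only decrease, and the two changes jointly account for the jump $4k$ in the index. The duality $\dim K_\mu = \dim C_{1-\mu}$ forces the increase in $\dim K_\mu$ and the decrease in $\dim C_\mu$ to be equal; hence each equals $2k$. The $2k$ new nullspace elements gained in passing from $1/2+\epsilon$ to $1/2-\epsilon$ are precisely the polyhomogeneous solutions whose leading behavior is $r^{-1/2}$ at (some subset of) the punctures.

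The only real subtlety is notational: one must not conflate the index $k$ that enumerates the indicial weights $\mu_k = k + 1/2$ (which equals $0$ in the specialization of Proposition~\ref{relindex} used here) with the half-valence $k = k_q$ of the vertex, which is what enters the formula $-4k$ through the number of punctures $2k$ of $S$. Once this is clearly separated, the remaining computation is essentially the display already carried out in the text preceding the corollary statement.
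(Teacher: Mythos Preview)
Your proposal is correct and follows essentially the same approach as the paper: the paper's proof is precisely the displayed computation immediately preceding the corollary, which combines Proposition~\ref{relindex} at the indicial weight $1/2$ with the duality $\Ind(1-\mu)=-\Ind(\mu)$ to obtain $2\,\Ind(1/2+\epsilon)=-4k$. Your additional justification of the concluding remark (splitting the jump $4k$ evenly between kernel and cokernel via duality) and your warning about the two distinct uses of the letter $k$ are both apt, though the paper leaves these implicit.
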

There is no value $\mu$ such that $\Ind(\mu) = 0$. In other words, \eqref{DSwss} is never invertible. As $\mu$ increases, the kernel 
decreases in dimension and the cokernel becomes larger, with jumps occuring precisely at the indicial weights.  The dimension
count for this nullspace is obtained in a different way in Lemma \ref{dim_NullKr}.

\medskip

We next discuss the various choices of Hilbert space domain for $\widehat{\Dirac}$.  The most basic choices are the
minimal and maximal domains: 
\begin{definition} 
\[
\calD_{\min}(\widehat{\Dirac}) := \Big\{ v \in L^2\Big|\ \exists\, v_j \in \calC^\infty_0: \ v_j \overset{L^2}\to v,\ \widehat{\Dirac} v_j \overset{L^2}\to \widehat{\Dirac} v\Big\}
\]
and 
\[
\calD_{\max}(\widehat{\Dirac}) := \Big\{v \in L^2\Big|\ \widehat{\Dirac} v \in L^2\Big\}.
\]
\end{definition}
The characterization of these relies on a regularity result from \cite{Mazzeo91_EllTheoryOfDiffEdgeOp}.
\begin{proposition}
If $v \in \calD_{\max}$, then near each $p_j$, 
\[
v = \begin{pmatrix} \hat{a}_0^{j} e^{-i\theta/2} \\ \hat{b}_0^{j}\,  e^{i\theta/2} \end{pmatrix} r^{-1/2} + \tilde{v}, \qquad \tilde{v} \in r H^1_b, 
\]
and conversely, any function of this form lies in $\calD_{\max}$.  On the other hand, $\calD_{\min} = r H^1_b$ consists of 
those terms in $\calD_{\max}$ where this $r^{-1/2}$ coefficient vanishes.  

In particular, the boundary trace map
\[
\widehat{\calB}_0: \calD_{\max}(\widehat{\Dirac}) \rightarrow \CC^{4k}, \ \  v \mapsto \big((\hat{a}^j_0,\hat{b}^j_0)\big)_{j=1}^{2k}
\]
is well-defined and surjective.
\label{maxdomdecomp} 
\end{proposition}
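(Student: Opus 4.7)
The proof has three tasks: establish the decomposition for $\calD_{\max}$, identify $\calD_{\min}$ with $rH^1_b$, and verify surjectivity of the boundary trace $\widehat{\calB}_0$. All three rest on the conic (b-) parametrix calculus for $\widehat{\Dirac}$ and the local regularity result of Proposition \ref{conicregularity}, applied in a range of weights that straddles the single critical indicial weight $\mu = 1/2$. Recall that the indicial roots of $\widehat{\Dirac}$ at each puncture are $\ZZ + \tfrac12$, so in the open strip $-1 < \lambda < 1/2$ (corresponding to weights between the $L^2$-threshold and the next indicial weight above it) only $\lambda = -\tfrac12$ appears, and its two-dimensional indicial space is spanned by $e^{-i\theta/2}$ and $e^{i\theta/2}$.

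For the forward direction of the $\calD_{\max}$ decomposition, I would invoke the b-parametrix for the Fredholm map $\widehat{\Dirac}: r^{\mu}H^1_b \to r^{\mu-1}L^2$ at a weight $\mu$ slightly larger than $1/2$, yielding $\widehat{G}_\mu \circ \widehat{\Dirac} = I - \widehat{R}_\mu$ with $\widehat{R}_\mu$ smoothing and mapping into polyhomogeneous sections whose expansions start at $r^{1/2}$. Given $v \in \calD_{\max}$, so $v \in r^{-\varepsilon}L^2$ and $f := \widehat{\Dirac} v \in L^2$, applying the Mellin transform localized near each puncture and shifting the contour from $\mathrm{Re}\,\lambda = -\varepsilon$ up to $\mathrm{Re}\,\lambda = 1/2 - \varepsilon$ picks up exactly one residue, at $\lambda = -\tfrac12$. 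This residue contributes the singular term $(\hat{a}^j_0 e^{-i\theta/2}, \hat{b}^j_0 e^{i\theta/2})^{t} r^{-1/2}$ at each $q_j$, while the shifted contour integral together with $\widehat{R}_\mu v$ produces a remainder $\tilde v \in rH^1_b$.

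For the converse direction and simultaneously the surjectivity of $\widehat{\calB}_0$, given any target tuple $\big((\hat{a}^j_0, \hat{b}^j_0)\big)_{j=1}^{2k} \in \CC^{4k}$, I would construct
\[
v_0 = \sum_{j=1}^{2k} \chi_j(r) \begin{pmatrix} \hat{a}^j_0 e^{-i\theta/2} \\ \hat{b}^j_0 e^{i\theta/2} \end{pmatrix} r^{-1/2},
\]
where each $\chi_j$ is a bump function supported near $q_j$ and identically one in a smaller neighborhood. Membership $v_0 \in L^2$ reduces to integrability of $r^{-1}$ against $r\,dr\,d\theta$, and $\widehat{\Dirac} v_0 \in L^2$ follows because the would-be $r^{-3/2}$ terms cancel identically by the indicial equation, leaving a bounded remainder plus the smooth, compactly supported contributions from $\chi_j'(r)$. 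Thus $v_0 \in \calD_{\max}$ and $\widehat{\calB}_0(v_0)$ equals the prescribed tuple; this simultaneously shows that every function of the asserted form lies in $\calD_{\max}$.

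Finally, for $\calD_{\min} = rH^1_b$: one inclusion follows from the formal adjoint pairing combined with the $\calD_{\max}$ decomposition, since test sections in $\calC^\infty_0$ have vanishing leading coefficients and the $L^2$-limit must inherit this, forcing all $\hat a^j_0, \hat b^j_0$ to vanish and placing $v$ in $rH^1_b$. For the reverse inclusion, given $v = ru$ with $u \in H^1_b$, approximate by $\chi_\varepsilon v$ with logarithmic cutoffs $\chi_\varepsilon(r) = \chi(\log r / \log \varepsilon)$ near each puncture; the commutator satisfies $\lvert[\widehat{\Dirac}, \chi_\varepsilon]v\rvert \lesssim \lvert \chi_\varepsilon'/r \cdot v\rvert = \lvert \chi_\varepsilon' u\rvert$, which tends to zero in $L^2$ by dominated convergence on the shrinking annular support. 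The most delicate step is identifying the remainder $\tilde v$ in the $\calD_{\max}$ decomposition as lying precisely in $rH^1_b$, rather than merely in some $r^{1/2-\varepsilon}L^2$; this requires carefully tracking the index sets of the Schwartz kernel of $\widehat{G}_\mu$ to confirm that the next term in the expansion sits at the indicial weight $3/2$ and not earlier.
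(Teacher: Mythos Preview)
Your proposal is correct and follows essentially the same route the paper relies on: the paper does not give a self-contained proof of this proposition, but simply cites the regularity theory from \cite{Mazzeo91_EllTheoryOfDiffEdgeOp} for the $\calD_{\max}$ decomposition and remarks in one sentence that the minimal-domain statement follows by showing the $r^{-1/2}$ term is not approximable in graph norm by compactly supported sections. Your Mellin-transform/parametrix argument is exactly the mechanism behind that cited regularity result, your explicit cutoff construction for surjectivity is the standard one, and your log-cutoff argument for $rH^1_b \subset \calD_{\min}$ is the usual density step. The only place you deviate slightly is in the direction $\calD_{\min} \subset rH^1_b$: the paper's hint is the direct non-approximability of the singular term, whereas you argue via the boundary pairing (vanishing of $\omega(\widehat{\calB}_0 v, \widehat{\calB}_0 w)$ for all $w$ forces $\widehat{\calB}_0 v = 0$); both are standard and equivalent once the decomposition and the symplectic form \eqref{sform} are in hand. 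Your closing caveat about pinning $\tilde v$ in $rH^1_b$ rather than merely $r^{1/2 - \varepsilon}L^2$ is well taken; this is exactly the step that requires the finer index-set bookkeeping in the b-calculus, and is where the citation to \cite{Mazzeo91_EllTheoryOfDiffEdgeOp} carries the weight.
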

The statement about the minimal domain is proved by showing that the term involving $r^{-1/2}$ is never approximable in the 
graph norm by smooth compactly supported sections. 

As a consequence of this Proposition, the quotient space $\calD_{\max}(\widehat{\Dirac})/\calD_{\min}(\widehat{\Dirac})$ is isomorphic
to $\CC^{4k}$.  A classical theorem of von Neumann states that the domains $\calD$ which contain the core domain $\calC^\infty_0$ 
and such that $\widehat{\Dirac} : \calD \to L^2$ is closed are in one-to-one correspondence with subspaces of this quotient.
This result is particularly elementary since the quotient is finite dimensional in this conic setting.

\subsection{Boundary pairing and self-adjoint extensions} 
The quotient $\calD_{\max}/\calD_{\min}$ has another important feature, namely a symplectic form
\[
\omega:  \calD_{\max}/\calD_{\min} \times \calD_{\max}/\calD_{\min} \longrightarrow \RR.
\]
This is defined initially via 
\[
\omega([u],[v])=\mathrm{Im}\sum_{j=1}^N\Bigg(\int_{S_j}\slD u\cdot v- u\cdot \slD v\Bigg),
\]
but upon using the decompositions of $u$ and $v$ from \eqref{maxdomdecomp} and using the same calculations as in
Corollary \ref{cor:bpairing}, it can be rewritten as
\begin{equation}
\omega \Big( \big(( \hat{a}_0^{j},\hat{b}_0^{j})\big)_{j=1}^{2k}, \big(( \tilde{a}^{j}_0,\tilde{b}^{j}_0)\big)_{j=1}^{2k} \Big) =  - \mathrm{Im}\, \sum_{j=1}^{2k}  \left(\hat{a}_0^{j} \overline{ \tilde{b}^{j}_0 } + \hat{b}_0^{j} \overline{ \tilde{a}^{j}_0} \right).
\label{sform}
\end{equation}

If $\calD_{\min} \subset \calD \subset \calD_{\max}$ is any one of these closed domains, there is a Hilbert space adjoint domain
$\calD^*$ which consists of all $v \in L^2$ such that the map $\calD \ni u \mapsto \langle \widehat{\Dirac} u, v \rangle$ 
extends to a bounded linear map defined for all $u \in L^2$.  We thus define, for any such $v \in \calD^*$, the 
adjoint operator $\widehat{\Dirac}^* v$ to be the unique element in $L^2$ such that this bounded linear functional equals
$u \mapsto \langle u, \widehat{\Dirac}^* v \rangle$.  It is easy to see that $\calD_{\min} \subset \calD^*$ and
$\widehat{\Dirac}^* = \widehat{\Dirac}$ on $\calD_{\min}$.   In fact, the minimal and maximal domains are dual to one another,
\[
(\widehat{\Dirac}, \calD_{\min})^* = (\widehat{\Dirac}, \calD_{\max}),\ \ (\widehat{\Dirac}, \calD_{\max})^* = (\widehat{\Dirac}, \calD_{\min}),
\]
and more generally still, using \eqref{sform}, 
\[
(\widehat{\Dirac}, \calD)^* = (\widehat{\Dirac}, \calD^\perp),
\]
where $\calD^\perp$ denotes the $\omega$-orthogonal complement.  (Strictly speaking, we take the preimage in $\calD_{\max}$
of the $\omega$-orthogonal complement of $\calD/\calD_{\max}$.) 

A particular closed extension $(\widehat{\Dirac}, \calD)$ is self-adjoint if it is equal to its own dual. By the remarks above,
this occurs precisely when $\calD^\perp = \calD$, or in other words, when $\calD$ descendes to a Lagrangian 
subspaces of $(\calD_{\max}/\calD_{\min}, \omega)$.   There are two important types of distinguished Lagrangian
subspace of this quotient, which correspond to two natural self-adjoint realizations of $\widehat{\Dirac}$.

The {\bf Krein extension} $\calD_{\Kr}$ is the extension of $\widehat{\Dirac}$ from $\calD_{\min}$ to $\calD_{\min} + 
\ker (\widehat{\Dirac}|_{\calD_{\max}})$.  This is actually a very general construction which yields a self-adjoint extension 
for any symmetric operator. It is a short exercise to check that it is its own dual.  The nullspace of $(\widehat{\Dirac}, \calD_{\Kr})$ 
consists of exactly those $v \in \calD_{\max}$ such that $\widehat{\Dirac} v = 0$, and the associated Lagrangian equals
the range of $\calB_0$ restricted to the nullspace of $\widehat{\Dirac}$ on $\calD_{\max}$.  In particular, the collection
of leading coefficients $\big(\hat{a}_0^{j},\hat{b}_0^{j})\big)_{j=1}^{2k}$ corresponding to nullspace elements $v \in \calD_{\max}$
is a complex $2k$-dimensional subspace of $\CC^{4k}$. 

The other interesting self-adjoint extensions of $\widehat{\Dirac}$ are induced by the boundary operators $\calT_{\psi}$ on
the edges of $\Si$.  Here $\psi := \{(c_1^j, d_1^j)\}_{j=1}^{2k}$ is a collection of pairs of functions along the edges of $\Si$,
and the boundary operator is defined using these exactly as in the case without vertices.
\begin{definition}\label{nondeg}
We say that this collection of functions, or the corresponding boundary operator $\calT_\psi$, is {\bf nondegenerate} if both of the 
following conditions hold:
\begin{itemize}
\item For each $j$, $(c_1^j, d_1^j) \neq (0,0)$ at any interior point of the edge $e_j$.
\item For every $j$, the limit at each end of the edge $e_j$
\[
\lim \dfrac{(c_1^j, d_1^j)}{| (c_1^j, d_1^j) |}
\]
exists.
\end{itemize}
\end{definition}
Denote by $(\hat{c}_1^j, \hat{d}_1^j)$ the limit of $\dfrac{(c_1^j, d_1^j)}{| (c_1^j, d_1^j) |}$ at $p$, $j=1,2,...,2k$. In terms of these limits, we can define the induced boundary operator $\widehat{\calT}_{\psi}$ at the face $S_p$ over the vertex $p$ by
\[
\widehat{\calT}_{\psi}\big(( \hat{a}_0^{j},\hat{b}_0^{j})\big)_{j=1}^{2k}\big):=\big(\overline{\hat{d}^{\, j}_1}\hat{a}^{\, j}_0-\hat{c}^{\, j}_1\overline{\hat{b}^{\, j}_0}\big)_{j=1}^{2k}
\]
for any $( \hat{a}_0^{j},\hat{b}_0^{j})\big)_{j=1}^{2k} \in \CC^{4k}$. This defines an $\R$-linear map $\widehat{\calT}_{\psi}:\CC^{2k}\times \overline{\CC^{2k}}\rightarrow \CC^{2k}$
which is block diagonal on $\big((\hat{a}^j_1,\hat{b}^j_1) \big)_{j = 1}^{2k}$. Define $\widehat{\calT}_{\psi}\circ \widehat{\calB}_0$ (which as before we 
abbreviate and refer to simply as $\widehat{\calT}_{\psi}$).  The same calculation as we gave at the end of Section 3 leads to the 
\begin{proposition}
The operator $\widehat{\Dirac}$ acting on $\widehat{\calD}_\psi = 
\{u \in \calD_{\max}|\  \widehat{ \calT}_{\psi} u = 0\}$ is self-adjoint. 
\end{proposition}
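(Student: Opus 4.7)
The plan is to show that $\widehat{\calD}_\psi / \calD_{\min}$ is a Lagrangian subspace of the finite-dimensional real symplectic vector space $(\calD_{\max}/\calD_{\min}, \omega) \cong \CC^{4k}$; by the characterization $(\widehat{\Dirac}, \calD)^{*} = (\widehat{\Dirac}, \calD^{\perp})$ recorded above, this is exactly equivalent to self-adjointness. The essential simplification compared with Section~3 is that no PDE arguments are needed beyond the maximal/minimal domain decomposition in Proposition~\ref{maxdomdecomp}: the entire problem reduces to finite-dimensional linear algebra on boundary data.

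A key observation is that both the symplectic form $\omega$ in~\eqref{sform} and the boundary operator $\widehat{\calT}_\psi$ split as direct sums over the $2k$ punctures, so it suffices to verify the Lagrangian property one puncture at a time. Fix a puncture, drop the index $j$, and work in the summand $\CC^2 \cong \RR^4$ with the real symplectic form $\omega_j((\hat a,\hat b),(\tilde a,\tilde b)) = \mathrm{Im}(\hat a \overline{\tilde b} - \hat b \overline{\tilde a})$ and $\RR$-linear kernel condition $\overline{\hat d_1}\hat a - \hat c_1 \overline{\hat b} = 0$. By nondegeneracy, the kernel of this condition at the puncture is the two-real-dimensional subspace
\[
L_j = \{(\hat c_1 \zeta, \hat d_1 \overline{\zeta}) : \zeta \in \CC\},
\]
exactly as in Section~3.

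Isotropy of $L_j$ under $\omega_j$ is a short direct computation: substituting $(\hat c_1\zeta,\hat d_1\overline\zeta)$ and $(\hat c_1\tilde\zeta,\hat d_1\overline{\tilde\zeta})$ yields a combination of $\hat c_1 \overline{\hat d_1} \zeta\tilde\zeta$ and its complex conjugate whose imaginary part vanishes. For the maximality (i.e., $L_j^{\perp_{\omega_j}} \subset L_j$), I replay the Section~3 argument verbatim at this puncture: given $(\tilde a_0,\tilde b_0)$ that pairs trivially with every element of $L_j$, test against $\zeta = f \in \RR$ and then against $\zeta = i f$ purely imaginary to extract two independent real conditions involving $\hat c_1\overline{\tilde b_0}$ and $\overline{\hat d_1}\tilde a_0$; then combine them using the identities $\mathrm{Im}(w + \overline w)=0$ and $\mathrm{Re}(w - \overline w)=0$ applied to $w = \hat c_1 \overline{\tilde b_0} - \overline{\hat d_1}\tilde a_0$ to conclude $\overline{\hat d_1}\tilde a_0 - \hat c_1 \overline{\tilde b_0} = 0$, i.e., $(\tilde a_0,\tilde b_0) \in L_j$.

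The main obstacle, such as it is, is purely bookkeeping: matching the sign convention for $\omega$ at a vertex in~\eqref{sform} with the sign convention used in the corresponding edge formula in Section~3 so that the Section~3 algebra transfers without change. (A sign discrepancy merely interchanges $w + \overline w$ with $w - \overline w$ in the real/imaginary split and leaves the conclusion intact by the same two identities.) Once the Lagrangian property is verified puncture-by-puncture, taking the direct sum over $j = 1,\ldots,2k$ shows that $\widehat{\calD}_\psi/\calD_{\min}$ is Lagrangian in $(\calD_{\max}/\calD_{\min},\omega)$, and hence $(\widehat{\Dirac}, \widehat{\calD}_\psi)$ is self-adjoint.
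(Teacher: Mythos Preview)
Your proposal is correct and follows essentially the same approach as the paper: the paper's ``proof'' consists of the single sentence ``The same calculation as we gave at the end of Section 3 leads to the Proposition,'' and you have carried out exactly that calculation, adapted to the finite-dimensional boundary data at each puncture. Your added structural observations---that the problem reduces to checking a Lagrangian condition in the finite-dimensional quotient $\calD_{\max}/\calD_{\min}$, and that both $\omega$ and $\widehat{\calT}_\psi$ split over punctures so the verification can be done one puncture at a time---are helpful clarifications not made explicit in the paper, but the core real/imaginary-$\zeta$ testing argument is identical to the one at the end of Section~3.
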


The operator $\widehat{\Dirac}$ on either $\calD_{\Kr}$ or any one of these other domains $\widehat{\calD}_\psi$ is
self-adjoint, and has compact resolvents. Thus these operators all have discrete spectra.  The Krein spectrum plays 
no role later. The eigenvalues of $(\widehat{\Dirac}, \widehat{\calD}_\psi)$ do enter our considerations below, and
we enumerate these as $\{\lambda_\ell \}$. 
\begin{proposition}\label{symm_of_spec}
The spectrum of $(\widehat{\Dirac}, \widehat{\calD}_\psi)$ is symmetric about $0$, and the dimension of the nullspace of this operator
is an even number $2h$ for some $h \in \NN$.
\end{proposition}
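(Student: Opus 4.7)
The plan is to exhibit a single endomorphism of $\slS\otimes\calI\big|_S$ that both anticommutes with $\widehat{\Dirac}$ and preserves $\widehat{\calD}_\psi$, and then read off both conclusions from its algebraic properties. The natural candidate is Clifford multiplication by the outward radial vector from the vertex,
\[
\sigma := \mathtt{cl}(\del_\rho).
\]
As a bundle endomorphism this is bounded and satisfies $\sigma^2 = -I$; moreover, because every tangential Clifford multiplication anticommutes with the radial one, one has the pointwise identity
\[
\sigma\,\widehat{\Dirac} + \widehat{\Dirac}\,\sigma = 0.
\]
In the splitting $\slS|_S = \slS^+\oplus\slS^-$ into the $(\pm i)$-eigenspaces of $\sigma$ used in Section 4.1, $\sigma$ acts as $\operatorname{diag}(i,-i)$.

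The one non-formal step is to verify that $\sigma$ preserves the self-adjoint domain. Since $\sigma$ is a bounded zeroth-order operator commuting with all $b$-vector fields, it immediately preserves $\calD_{\min}=rH^1_b$ and $\calD_{\max}$, and it acts on the pair of leading coefficients at each puncture $p_j$ (Proposition \ref{maxdomdecomp}) by $(\hat a^{j}_0,\hat b^{j}_0)\mapsto(i\hat a^{j}_0,-i\hat b^{j}_0)$. Using $\overline{-i}=i$, a one-line computation gives
\[
\widehat{\calT}_\psi^{\,j}(\sigma v) = \overline{\hat d_1^{\,j}}\,(i\hat a^{j}_0) - \hat c_1^{\,j}\,\overline{(-i\hat b^{j}_0)} = i\bigl(\overline{\hat d_1^{\,j}}\,\hat a^{j}_0 - \hat c_1^{\,j}\,\overline{\hat b^{j}_0}\bigr) = i\,\widehat{\calT}_\psi^{\,j}(v),
\]
so $\sigma$ carries $\ker\widehat{\calT}_\psi$ to itself and hence preserves $\widehat{\calD}_\psi$.

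Both conclusions then follow by soft arguments. For the spectral symmetry: if $v\in\widehat{\calD}_\psi$ satisfies $\widehat{\Dirac}v=\lambda v$, then $\sigma v\in\widehat{\calD}_\psi$ and
\[
\widehat{\Dirac}(\sigma v) = -\sigma\,\widehat{\Dirac} v = -\lambda\,\sigma v.
\]
Since $\sigma$ is a bijection of $\widehat{\calD}_\psi$ with inverse $-\sigma$, it induces a linear bijection of the $\lambda$-eigenspace onto the $(-\lambda)$-eigenspace, giving spectral symmetry with multiplicities. For the nullspace: restricted to $\ker(\widehat{\Dirac},\widehat{\calD}_\psi)$, the operator $\sigma$ is a real-linear endomorphism with $\sigma^2=-I$, i.e., a complex structure on this real vector space, which forces its real dimension to be of the form $2h$ with $h\in\NN$.

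The only real content is the algebraic identity $\widehat{\calT}_\psi\circ\sigma = i\,\widehat{\calT}_\psi$ on leading coefficients; everything else is a direct consequence of Clifford anticommutation and $\sigma^2=-I$, so I do not anticipate any substantial obstacle.
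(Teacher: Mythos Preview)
Your argument is correct and follows exactly the paper's approach: the proof in the paper also uses that $\mathtt{cl}(\del_\rho)$ anticommutes with $\widehat{\Dirac}$ to get the spectral symmetry, and that it squares to $-I$ to give a complex structure on the nullspace. Your explicit verification that $\sigma=\mathtt{cl}(\del_\rho)$ preserves $\widehat{\calD}_\psi$ via the identity $\widehat{\calT}_\psi\circ\sigma = i\,\widehat{\calT}_\psi$ on leading coefficients is a welcome addition that the paper's brief argument leaves implicit.
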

This follows immedately from the observation that the action of $\mathtt{cl}(\del_\rho)$ on $\slS^+ \oplus \slS^-$ on $S^2$ 
anticommutes with $\widehat{\Dirac}$, i.e., $\mathtt{cl}(\del_\rho) \widehat{\Dirac} + \widehat{\Dirac} \mathtt{cl}(\del_\rho) = 0$. 
This shows that if $\widehat{\Dirac} v = \lambda v$, then $\widehat{\Dirac} (\mathtt{cl}(\del_\rho) v) = - \lambda (\mathtt{cl}(\del_\rho) v)$. 
Furthermore, $\mathtt{cl}(\del_\rho)$ is a complex structure on the nullspace, so this nullspace has even dimension.

\subsection{Nullspaces}   We now describe the nullspaces of these self-adjoint operators.

\begin{lemma}\label{dim_NullKr}
The dimension of the nullspace of $(\widehat{\Dirac}, \calD_{\Kr})$ equals $2k$.
\end{lemma}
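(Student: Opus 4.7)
The first step is to identify $\ker(\widehat{\Dirac}|_{\calD_{\Kr}})$ with $\ker(\widehat{\Dirac}|_{\calD_{\max}})$. Any $u \in \calD_{\Kr}$ decomposes as $u = u_1 + u_2$ with $u_1 \in \calD_{\min}$ and $u_2 \in \ker(\widehat{\Dirac}|_{\calD_{\max}})$, so $\widehat{\Dirac}u = \widehat{\Dirac} u_1$; if this vanishes then $u_1 \in \ker(\widehat{\Dirac}|_{\calD_{\min}}) \subset \ker(\widehat{\Dirac}|_{\calD_{\max}})$, and hence $u \in \ker(\widehat{\Dirac}|_{\calD_{\max}})$. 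The task thus reduces to computing $\dim \ker(\widehat{\Dirac}|_{\calD_{\max}})$.

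Next I would apply the boundary trace $\widehat{\calB}_0 : \calD_{\max} \to \CC^{4k}$ of Proposition~\ref{maxdomdecomp}. Since $\ker \widehat{\calB}_0 = \calD_{\min}$, the restriction of $\widehat{\calB}_0$ to $\ker(\widehat{\Dirac}|_{\calD_{\max}})$ has kernel $\ker(\widehat{\Dirac}|_{\calD_{\min}})$ and image a complex subspace $L \subset \CC^{4k}$. By Corollary~\ref{jumpind}, $L$ has complex dimension $2k$: this is precisely the space of leading $r^{-1/2}$ coefficients of $L^2$ harmonic twisted spinors. (Equivalently, self-adjointness of $(\widehat{\Dirac},\calD_{\Kr})$ identifies $L$ with the complex Lagrangian in the $8k$-real-dimensional real symplectic space $(\calD_{\max}/\calD_{\min},\omega)$ attached to the Krein extension.) Either way,
\[
\dim \ker(\widehat{\Dirac}|_{\calD_{\max}}) = 2k + \dim \ker(\widehat{\Dirac}|_{\calD_{\min}}),
\]
and it remains to prove that the minimal-domain nullspace is trivial.

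For this I would invoke a Weitzenb\"ock-Lichnerowicz argument. The twisting line $\calI$ is flat, so $\widehat{\Dirac}^{\,2} = \nabla^{*}\nabla + R/4$, where $R$ is the scalar curvature of the cross-sectional metric on $S$; at the exact conic model, $S$ is the round sphere and $R$ is a strictly positive constant. Given any $v \in \calD_{\min}$ with $\widehat{\Dirac} v = 0$, pick $v_n \in \calC^{\infty}_c(S \setminus \{p_j\}; \slS \otimes \calI)$ with $v_n \to v$ and $\widehat{\Dirac} v_n \to 0$ in $L^2$. For each compactly supported $v_n$, Lichnerowicz gives
\[
\|\widehat{\Dirac} v_n\|_{L^2}^2 = \|\nabla v_n\|_{L^2}^2 + \tfrac{1}{4} \int_S R\,|v_n|^2,
\]
with no boundary contribution. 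In the limit the left side vanishes and $\tfrac{1}{4}\int_S R|v_n|^2 \to \tfrac{1}{4}\int_S R|v|^2$, while $\|\nabla v_n\|^2 \geq 0$, forcing $\int_S R|v|^2 \leq 0$ and hence $v=0$. The main subtlety is the positivity of $R$, which only holds at the leading-order round model; as noted earlier in the paper, one may freely deform the metric on $M$ to this exact conic model when computing invariants like the Krein nullspace dimension, so the reduction is legitimate and yields $\dim \ker(\widehat{\Dirac}|_{\calD_{\Kr}}) = 2k$.
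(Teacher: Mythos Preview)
Your proof is correct and follows essentially the same strategy as the paper: both arguments reduce to showing that $\widehat{\calB}_0$ is injective on the Krein nullspace (equivalently, that $\ker(\widehat{\Dirac}|_{\calD_{\min}})=0$) via the Lichnerowicz formula with $R>0$ on the round sphere, and then identify the image as a $2k$-dimensional subspace of $\CC^{4k}$. The paper gets the dimension $2k$ directly from the Lagrangian property of the image, while you cite Corollary~\ref{jumpind} (noting the Lagrangian route as an alternative); since the paper views Corollary~\ref{jumpind} and Lemma~\ref{dim_NullKr} as two independent derivations of the same count, your use of the former inside the latter makes the lemma slightly redundant, but it is not wrong.

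One caution: your closing remark that ``one may freely deform the metric \ldots\ when computing invariants like the Krein nullspace dimension'' is not correct as stated---nullspace dimensions are not deformation invariants in general. The point is moot here because the paper has already restricted to the exact conic model (round cross-section with $R=2$) before stating the lemma, so no deformation argument is needed.
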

\begin{proof}
It suffices to check that the restriction of $\widehat{\calB}_0$ to this nullspace is injective.  If $v \in \calD_{\max}$ 
and $\widehat{\calB}_0(v) = 0$, then $v \in r H^1_b$.  This justifies the integration by parts
\[
||\widehat{\Dirac} v||^2 = \langle \widehat{\Dirac}\ ^2 v, v \rangle = ||\nabla v||^2 + \frac14 \langle R v, v\rangle;
\]
the second equality is the standard Weitzenb\"ock formula for $\widehat{\Dirac}$ on $S^2$. The scalar curvature $R$ of $S^2$ is $2$, 
so $||\widehat{\Dirac} v||^2 \geq \frac12 ||v||^2$, and hence if $\widehat{\Dirac} v = 0$, we must have $v \equiv 0$. 
Thus, as remarked earlier in fact, the Krein nullspace is isomorphic to the image of $\widehat{\calB}_0$, which has 
complex dimension $2k$ since it is Lagrangian.
\end{proof}

For the other extensions associated to boundary operator $\widehat{ \calT}_{\psi}$, the nullspace is a {\it real} subspace of $\CC^{4k}$,
but its dimension may vary over a range. 

\begin{proposition}\label{iso_of_null}
Suppose that $(\hat{c}_1^{\, j}, \hat{d}_1^{\, j}) \neq (0,0)$ for any $j$. Then the nullspace of $(\widehat{\Dirac}, \widehat{\calD}_\psi)$ 
has real dimension $2h$ for some $h\leq k$. Furthermore, $h=0$ for a dense open set of coefficients $\{(\hat{c}^j_1, \hat{d}^j_1)\}_{j=1}^{2k}
\in \CC^{4k}$. 
\end{proposition}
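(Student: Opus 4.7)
The strategy is to identify the null space with the intersection of two Lagrangian subspaces in the boundary data space $(\CC^{4k},\omega)$, then analyze this intersection via a complex-structure argument (for the parity $2h$), a linear-algebraic dimension count (for $h\le k$), and an incidence-variety dimension count (for generic vanishing).

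By Lemma~\ref{dim_NullKr}, the trace map $\widehat\calB_0$ restricts to a $\CC$-linear isomorphism from the Krein null space onto a $\CC$-linear Lagrangian $L_{\Kr}\subset\CC^{4k}$ of complex dimension $2k$. Let $L_\psi:=\ker\widehat\calT_\psi$; this is an $\RR$-Lagrangian of real dimension $4k$, parametrized per puncture by $\{(\hat c_1^j\zeta,\hat d_1^j\overline\zeta):\zeta\in\CC\}$. Any $v\in\widehat\calD_\psi$ with $\widehat\Dirac v=0$ lies in the Krein null space (and is determined there by its trace), so $\widehat\calB_0$ yields an $\RR$-linear isomorphism
\[
\ker(\widehat\Dirac,\widehat\calD_\psi)\ \cong\ L_{\Kr}\cap L_\psi.
\]

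For the even parity $2h$, observe that $J:=\mathtt{cl}(\partial_\rho)$ acts on $\slS^+\oplus\slS^-$ as $\mathrm{diag}(i,-i)$, and hence on leading traces as $J(\hat a_0^j,\hat b_0^j)=(i\hat a_0^j,-i\hat b_0^j)$. It preserves $L_{\Kr}$ by the proof of Proposition~\ref{symm_of_spec}, and substitution shows $\widehat\calT_\psi\circ J=i\,\widehat\calT_\psi$, so $J$ preserves $L_\psi$ as well. Thus $J$ restricts to a complex structure on $L_{\Kr}\cap L_\psi$, so $\dim_\RR(L_{\Kr}\cap L_\psi)=2h$ is even. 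For the bound $h\le k$, note that the ambient complex structure $i$ preserves $L_{\Kr}$ (since $L_{\Kr}$ is $\CC$-linear) but sends $L_\psi$ to the distinct $\RR$-Lagrangian $L_\psi':=\{\overline{\hat d_1^j}\hat a_0^j+\hat c_1^j\overline{\hat b_0^j}=0\}$. Setting $V:=L_{\Kr}\cap L_\psi$ and applying the identity $\dim_\RR(V+iV)+\dim_\RR(V\cap iV)=2\dim_\RR V$, together with the inclusion $V+iV\subset L_{\Kr}$, gives
\[
4h\ \le\ 4k+\dim_\RR(L_{\Kr}\cap L_\psi\cap L_\psi').
\]
A per-puncture computation shows that $L_\psi\cap L_\psi'$ is nonzero only at punctures where $\hat c_1^j\hat d_1^j=0$, and in those cases it consists of single-chirality data of the form $(\hat a_0^j,0)$ or $(0,\hat b_0^j)$; the Weitzenböck vanishing argument from Lemma~\ref{dim_NullKr}, applied to the relevant chirality component of the Krein null space, then forces $L_{\Kr}\cap L_\psi\cap L_\psi'=0$, yielding $h\le k$.

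For the generic vanishing $h=0$, consider the incidence variety
\[
\calZ:=\big\{(\psi,v):v\in L_{\Kr}\setminus\{0\},\ \widehat\calT_\psi(v)=0\big\}\ \subset\ (\CC^2\setminus\{0\})^{2k}\times L_{\Kr}.
\]
For $v$ whose trace is nonzero at every puncture, the fiber of $\calZ$ over $v$ is cut out by one complex-linear equation in each $(\hat c_1^j,\hat d_1^j)$, giving real codimension $4k$ in the $\psi$-space of real dimension $8k$; hence the fiber has real dimension $4k$ and $\dim_\RR\calZ\le 4k+4k=8k$. The fiber of the projection $\calZ\to(\CC^2\setminus\{0\})^{2k}$ over any image point is $(L_{\Kr}\cap L_\psi)\setminus\{0\}$, a linear subspace of $L_{\Kr}$ with the origin removed; by the $J$-complex structure constructed above, its real dimension is at least $2$. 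Consequently the image has real dimension at most $8k-2<8k$, so $\{h\ge 1\}$ is a proper real-analytic subvariety of the parameter space and $\{h=0\}$ is open and dense. The principal analytic difficulty in carrying out this plan is the Weitzenböck-based verification at degenerate punctures needed for the bound $h\le k$; the parity and density conclusions are essentially symplectic-linear-algebraic once the identification via $\widehat\calB_0$ is in place.
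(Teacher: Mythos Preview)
Your argument and the paper's share the same skeleton: identify the nullspace with $L_{\Kr}\cap L_\psi$ via the boundary trace, obtain the parity $2h$ from $J=\cl(\partial_\rho)$, and bound $h\le k$ by showing that the $\RR$-span of $V:=L_{\Kr}\cap L_\psi$ and $iV$ inside the $4k$-real-dimensional Krein nullspace has twice the dimension of $V$. The paper phrases this last step as: if $(v^+,v^-)\in\ker\widehat{\calT}_\psi$ is nonzero then $(iv^+,iv^-)\notin\ker\widehat{\calT}_\psi$. Your version is the equivalent claim $L_{\Kr}\cap L_\psi\cap L_\psi'=0$.

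There is a genuine gap in your verification of this last claim. The Weitzenb\"ock argument of Lemma~\ref{dim_NullKr} applies to a section whose boundary trace vanishes at \emph{every} puncture, since only then is the section in $rH^1_b$. But an element of $L_{\Kr}\cap L_\psi\cap L_\psi'$ satisfies $\hat a_0^{\,j}=0$ only where $\hat d_1^{\,j}\neq 0$ and $\hat b_0^{\,j}=0$ only where $\hat c_1^{\,j}\neq 0$. If, say, $\hat d_1^{\,j}=0$ for several $j$, the holomorphic component $v^+$ may well have nonzero leading coefficient at those punctures, and the Weitzenb\"ock identity says nothing about it. Concretely, for $k=2$ with $\hat d_1^{\,1}=\hat d_1^{\,2}=\hat d_1^{\,3}=0$, $\hat d_1^{\,4}\neq 0$ and all $\hat c_1^{\,j}\neq 0$, the space $L_{\Kr}\cap L_\psi\cap L_\psi'$ contains the complex line of pairs $(v^+,0)$ with $\hat a_0^{\,4}=0$, so your inequality $4h\le 4k+\dim_\RR(L_{\Kr}\cap L_\psi\cap L_\psi')$ degenerates. (In that same example one computes $h=1\le 2=k$, so the proposition is not violated; but your route to the bound breaks down.) When every $\hat c_1^{\,j}\hat d_1^{\,j}\neq 0$ one has $L_\psi\cap L_\psi'=0$ outright and no Weitzenb\"ock is needed, so your argument is fine there---and the paper's own one-line justification is equally terse on the degenerate case.

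For the density of $\{h=0\}$ the paper takes a shorter path: after restricting to the open set where all $\hat c_1^{\,j}\neq 0$ and normalizing $\hat c_1^{\,j}=1$, the nullspace is the intersection of two real $2k$-planes in $\RR^{4k}$, and one of them moves with $(\hat d_1^{\,j})$, making generic triviality immediate. Your incidence-variety count is a valid alternative, but note two loose ends: the fibre equation $\overline{\hat d_1^{\,j}}\hat a_0^{\,j}-\hat c_1^{\,j}\overline{\hat b_0^{\,j}}=0$ is $\RR$-linear (not $\CC$-linear) in $(\hat c_1^{\,j},\hat d_1^{\,j})$, though your codimension count is still correct; and you must stratify over those $v\in L_{\Kr}$ whose trace vanishes at some puncture, since there the fibre jumps in dimension.
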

\begin{proof}
First observe that 
\[
\mathcal{D}_{\Kr}= \mathcal{D}^+\oplus\mathcal{D}^-, \qquad 
\mathcal{D}^{\pm}:=\{v\in\mathcal{D}_{\Kr}:\ v(x)\in \slS^{\pm} \mbox{ for all }x\}.
\]
By Proposition \ref{symm_of_spec} and Lemma \ref{dim_NullKr}, $\ker(\widehat{D}|_{\mathcal{D}^+})\cong \ker(\widehat{D}|_{\mathcal{D}^-})$,
and each has complex dimension $k$. Therefore, 
\begin{align}
\widehat{\calT}_\psi: \ker(\widehat{D}|_{\cal{D}^+})\oplus\ker(\widehat{D}|_{\cal{D}^-})\rightarrow \mathbb{C}^{2k}
\end{align}
is a $\R$-linear map $\RR^{4k} \longrightarrow \RR^{4k}$. From the definition of $\widehat{\calT}_\psi$, we see that if
$(v^+,v^-)\in \ker(\widehat{\calT}_\psi)$ then $(iv^+,iv^-)\notin \ker(\widehat{\calT}_\psi)$.   Thus 
any real linearly independent set $\{(v^+_j,v^-_j)\}_{j=1}^m\subset \ker(\widehat{\calT}_\psi)$ corresponds to a real linearly 
independent set $\{(v^+_j,v^-_j)\}_{j=1}^m\cup \{(iv^+_j,iv^-_j)\}_{j=1}^m$. 
Thus 
\[
\dim_{\R} \ker(\widehat{\calT}_\psi) \leq \frac12 \dim_{\R} \left(\ker(\widehat{D}|_{\mathcal{D}^+})\oplus\ker(\widehat{D}|_{\mathcal{D}^-}) \right) = 
\frac12 (2k + 2k) = 2k.
\]

If all of the $\hat{c}_1^{\, j}$ and $\hat{d}_1^{\, j}$ are nonvanishing, which is the generic case, then we may as well assume
that every $\hat{c}_1^{\, j}=1$. By Lemma \ref{dim_NullKr}, the nullspace of $(\widehat{D},\widehat{\calD}_{\psi})$ is 
the intersection of two real $2k$-dimensional spaces
\[
\Big\{(\overline{\hat{d}^{\, j}_1}\hat{a}^{\, j}_0)_{j=1}^{2k}\Big|(\hat{a}^{\, j}_0)\in\widehat{\calB}_0(\calD^+)\Big\}
\ \ \mbox{ and } \ \ 
\Big\{(\overline{\hat{b}^{\, j}_0})_{j=1}^{2k}\Big|(\hat{b}^{\, j}_0)\in\widehat{\calB}_0(\calD^-)\Big\}
\]
in $\CC^{2k}\cong\RR^{4k}$. This is trivial for generic $\hat{d}_1^{\, j}$.
\end{proof}

\begin{remark} 
It is also possible to express the elements in the kernel of $(\widehat{D}, \calD_{Kr})$ in holomorphic terms. 

To do so, first note that the spin bundle on $S^2$ is identified with $K^{1/2} \oplus K^{-1/2}$, where $K$ is the canonical bundle 
of the sphere. In terms of this, 
\[
\widehat{\Dirac} = \begin{pmatrix}  0 & \del \\ \overline{\del} & 0 \end{pmatrix}.
\]
Thus $\del (f dz^{-1/2}) = f_z dz^{1/2}$, while for the second row, we further identify $K^{1/2}$ with $\overline{K}^{-1/2}$ 
and $\overline{K}^{1/2}$ with $K^{-1/2}$, so that 
\[
\overline{\del} (f dz^{1/2}) = \overline{\del} (f d\bar{z}^{-1/2}) = f_{\bar{z}} d\bar{z}^{1/2} = f_{\bar{z}} dz^{-1/2}. 
\]
We conclude from this that a solution of $\widehat{\Dirac} v = 0$ is a pair
\[
v = \begin{pmatrix}  f_1 dz^{1/2} \\ f_2 d\bar{z}^{1/2} \end{pmatrix}\ \ \mbox{where}\ \ \ \del_{\bar{z}} f_1 = 0,\ \del_z f_2 = 0.
\]
The kernel of the boundary operator consists of all such pairs $(f_1, f_2)$ as above which are $\calO(r^{-1/2})$ at each $p_j$.  

First recall a more standard computation.  Consider the space of meromorphic abelian differentials on $S^2 = \mathbb P^1$ 
with at most simple poles at the points $p_1, \ldots, p_{2k}$.   In an affine holomorphic coordinate $z$, and
positioning the points $p_j$ away from infinity, any such differential $\omega = h dz$ where $h(z)$ is meromorphic
with simple poles at each of the $p_j$. To ensure that $\omega$ is regular at infinity, $h$ must also decay at least like $|z|^{-2}$ at infinity. 
These conditions imply that $h(z) = P(z)/ (z-p_1) \ldots (z-p_{2k})$, where $P(z)$ is a polynomial of order at most $2k-2$.
This proves that there is a (complex) $(2k-1)$-dimensional space of these solutions.  Similarly, the space of antiholomorphic forms
$k(z) d\bar{z}$ also has complex dimension $2k-1$. 

Next, a meromorphic differential $\omega$ with simple poles is the square of a section $f(z) \sqrt{dz}$ of $K^{1/2} \otimes \calI$ 
if and only if $P(z)$ is the square of another polynomial $Q(z)$, which thus has degree less than or equal to $k-1$: 
\[
f(z) = \frac{ Q(z)}{ \sqrt{(z-p_1) \ldots (z-p_{2k})} },\ \ \mathrm{deg}\, Q \leq k-1,
\]
with a similar computation for the antiholomorphic part. Thus $\dim_{\C} \ker(\widehat{D}|_{\calD^\pm }) = k$. 
\end{remark}

\subsection{The Dirac operator on a cone over a punctured sphere}
Equipped with this information about the `cross-sectional operator' $\widehat{\Dirac}$, we now return to the study of $\Dirac$ 
near a vertex $q$. In this subsection we assume for simplicity that the metric is Euclidean and that the graph $\Sigma$ is a union 
of rays emanating from the origin, so $\Dirac$ is the `standard' constant coefficient Dirac operator on $\RR^3$ (twisted by 
the $\RR$-bundle $\calI$): 
\begin{equation}
\Dirac = \mathtt{cl}(\del_\rho) \left( \left(\frac{\del\,}{\del \rho} + \frac{1}{\rho}\right) - \frac{1}{\rho}\widehat{\Dirac}\right).
\label{Dircps}
\end{equation}
This is again a conic operator, but one where the cross-section is no longer a closed manifold, but instead a manifold with boundary,
or equivalently, the punctured sphere.  The rays emanating from the origin corresponding to each puncture in $S^2$
are edges, as studied in Section 3. 

In this iterated edge setting, there is no reasonable characterization of the maximal domain of $\Dirac$. More specifically,
$\calD_{\max}(\Dirac)$ is defined as above, but the typical elements in it may have very complicated regularity near the vertices.
For example, there is no reason that these elements must have partial expansions, as we saw for isolated conic singularities.
We work instead with an {\it intermediate} domain which uses what we have learned about the induced Dirac operator
on the cross-sections. 

By Fubini's theorem, any $u \in L^2 (C(S))$ is a vector-valued function $u(\rho, \cdot) \in L^2(S; d\omega)$ with 
$L^2$ dependence on $\rho$:  
\[
L^2( \{(\rho, \omega) \in C(S)\}; \rho^2 d\rho d\omega) =  L^2( \RR^+, \rho^2 d\rho;  L^2(S; d\omega)).
\]
Restricting the cross-sectional space $L^2(S)$ and allowing regularity in the $\rho$ direction, as well as a weight function, we 
obtain the family of spaces
\[
\calD_{\psi, \mu} = \{ u \in \calD_{\max}(\Dirac)|\  u \in \rho^\mu H^1_b( \RR^+, \rho^2 d\rho; L^2(S, d\omega)) \cap
\rho^\mu L^2( \RR^+, \rho^2 d\rho; \widehat{\calD}_\psi(\widehat{\Dirac}))\}.
\]
This is complete with respect to the graph norm
\[
||u||^2_{Gr} := \int_{C(S)}  \left(  |\rho^{-\mu}u|^2 + |\rho^{-\mu} \rho\del_\rho u|^2 + 
|\rho^{-\mu}  \widehat{\Dirac} u|^2 \right)\, \rho^2 d\rho d\omega.
\]
This definition is local near each vertex, but can obviously be combined with $H^1_e \cap \calD_{\max}$ functions away from the vertices
which satisfy the boundary equation. Thus a section which lies in this domain satisfies the $\calT_\psi$ boundary conditions along 
the interior of each edge, and decays at the rate $\rho^{\mu-3/2}$ at each vertex, cf.\ Proposition \ref{4.19} below. 
Observe that we have defined these spaces carefully so that
\[
\Dirac:  \calD_{\psi, \mu} \longrightarrow \rho^{\mu-1} L^2( C(S), \rho^2 d\rho d\omega)
\]
is bounded.    One very important aspect is that the restriction of $\Dirac$ to a neighborhood of any vertex takes
the form \eqref{Dircps}, but where the cross-sectional operator $\widehat{\Dirac}$ is now self-adjoint given this 
choice of boundary conditions.  In particular, this realization of $\widehat{\Dirac}$ has discrete spectrum. 

\medskip

\noindent{\bf Indicial roots:}  
Exactly as in the previous settings, we can define the indicial roots $\gamma$ of $\Dirac$ relative to this choice of 
boundary conditions on the edges, as follows.   A number $\gamma$ is an indicial root of $\Dirac$ at $p$ if there exists 
a section $v(\omega) \in \widehat{\calD}_\psi$ on the cross-section $S_p$ such that
\[
\Dirac ( \rho^\gamma v(\omega)) = 0.
\]
(This is an exact solution near $p$ because we are in the model case.) Using \eqref{Diracsphcoor}, we calculate that for any $\lambda$ and $v$, 
\[
-\mathtt{cl}(\del_\rho) \Dirac( \rho^\gamma v(\omega)) = \rho^{\gamma-1} ( \widehat{\Dirac}  - (\gamma+1)) v(\omega),
\]
so $\gamma$ is an indicial root if and only if $\lambda  = \gamma + 1$ is an eigenvalue of $\widehat{\Dirac}$.  

Note in particular that if the nullspace of $\widehat{\Dirac}$ is nonempty, then $\gamma = -1$ is an indicial root of $\Dirac$.  Furthermore,
since the spectrum $\{\lambda_j\}$ of $\widehat{\Dirac}$ is reflection-invariant across $0$, the indicial roots
of $\Dirac$ at $p$ are symmetric around $-1$, i.e., of the form
\[
\gamma_j^\pm = -1 \pm \lambda_j, \qquad \lambda_j \in \mathrm{spec}\,(\widehat{\Dirac}) \cap [0,\infty).
\]


\medskip

\noindent{\bf Regularity and expansions:}  
We next review some facts about higher regularity.
\begin{proposition}
Suppose that $u \in \calD_{\psi,\mu}$ and $\Dirac u = f \in \calC^\infty_0$ is supported 
away from $\rho = 0$.  Then 
\[
u \sim \sum_{\gamma_i > \mu-3/2} A_i \phi_i(\omega) \rho^{\gamma_i} = \sum_{\lambda_i > \mu-1/2}  A_i \phi_i(\omega) \rho^{\lambda_i-1}
\]
as $\rho \searrow 0$, where the $\phi_i$ are the eigensections of $(\widehat{\Dirac}, \widehat{\calD}_\psi)$ with eigenvalue $\lambda_i$ 
and the $A_i$ are constants.   Furthermore, $u$ is polyhomogeneous along the edges. 
\label{4.19}
\end{proposition}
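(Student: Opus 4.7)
The plan is to reduce $\Dirac u = 0$ near $\rho = 0$ to a decoupled system of scalar ODEs in $\rho$ by projecting onto the eigenbasis of $(\widehat{\Dirac},\widehat{\calD}_\psi)$, then assemble the $\rho$-solutions back into an asymptotic series. Since $f = \Dirac u$ vanishes on $\{\rho < \rho_0\}$ for some $\rho_0 > 0$, the analysis is purely local at the cone point and concerns the homogeneous problem. Polyhomogeneity along each edge, away from any vertex, is not a new phenomenon: it follows from Proposition~\ref{sexp} applied in a collar around any interior point of an edge $e_j$, with smooth $y$-dependence.

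Multiplying \eqref{Dircps} through by $-\mathtt{cl}(\del_\rho)$, the equation $\Dirac u = 0$ becomes the $L^2(S)$-valued first-order ODE
\[
\rho\, \partial_\rho u + u - \widehat{\Dirac} u = 0
\]
on $(0,\rho_0)$. Using the orthonormal eigenbasis $\{\phi_j\}$ of $(\widehat{\Dirac},\widehat{\calD}_\psi)$ with eigenvalues $\{\lambda_j\}$ (self-adjoint with compact resolvent, so discrete spectrum), I would expand $u(\rho,\omega) = \sum_j u_j(\rho)\,\phi_j(\omega)$ with $u_j(\rho) := \langle u(\rho,\cdot), \phi_j\rangle_{L^2(S)}$ and test against each $\phi_j$ to obtain
\[
\rho\, u_j'(\rho) + (1 - \lambda_j)\, u_j(\rho) = 0,
\]
whose only solutions are $u_j(\rho) = A_j \rho^{\lambda_j - 1}$. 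The hypothesis $u \in \rho^\mu L^2(C(S), \rho^2 d\rho\, d\omega)$ restricts this to $\rho^{\lambda_j - 1} \in \rho^\mu L^2(\rho^2 d\rho)$ near $0$, which by the threshold calculation of \eqref{indrindw} forces $\lambda_j > \mu - 1/2$ whenever $A_j \neq 0$. Setting $\gamma_j = \lambda_j - 1$, this recovers the claimed formal expansion.

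To promote this to a genuine asymptotic expansion, I would use the Mellin transform in $\rho$. The operator $\rho\partial_\rho + 1 - \widehat{\Dirac}$ becomes multiplication by $\tau + 1 - \widehat{\Dirac}$ on the Mellin side, whose inverse is meromorphic in $\tau$ with simple poles exactly at $\tau = \lambda_j - 1$ and residues proportional to the spectral projector onto $\phi_j$. Shifting the inverse Mellin contour past the indicial roots $\lambda_j - 1$ up to any target $\lambda_j \leq N$ picks up the partial sum $\sum_{\lambda_j \leq N} A_j \phi_j \rho^{\lambda_j - 1}$ and leaves a remainder controlled in $\rho^{N - 1/2 - \varepsilon} L^2$ for any $\varepsilon > 0$. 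Since $N$ may be made arbitrarily large, this yields the full asymptotic series in the classical sense, and the corresponding statements upon differentiation in $\rho$ follow from the ODE, while tangential regularity follows from interior elliptic estimates for $\widehat{\Dirac}$ on $S$.

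The main obstacle I expect is control of the Mellin contour shift: one needs polynomial growth of the resolvent $(\tau + 1 - \widehat{\Dirac})^{-1}$ on vertical lines lying strictly between consecutive indicial roots, uniformly in bounded horizontal strips. This is a standard but nontrivial consequence of self-adjointness of $\widehat{\Dirac}$ on $\widehat{\calD}_\psi$ together with a Weyl-type bound on the eigenvalue counting function for the cross-sectional operator. Granted this, the rest is routine, and the edge polyhomogeneity follows immediately from Proposition~\ref{sexp} as noted.
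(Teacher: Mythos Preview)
Your Mellin--contour-shift argument for the $\rho$-expansion is essentially the paper's own proof: the paper first multiplies by a cutoff so that $u$ is supported in $\{\rho\le 1\}$ (you should do the same, since the Mellin integral is over all of $\RR^+$), transforms the equation to $(\widehat{\Dirac}+i\zeta+1)u_M=f_M$ with $f_M$ entire, and then shifts the contour across the poles of the resolvent at the eigenvalues $\lambda_j$. Your preliminary eigendecomposition is a pleasant addition and in fact already suffices without any Mellin machinery: once $u_j(\rho)=A_j\rho^{\lambda_j-1}$ on $(0,\rho_0)$, orthonormality of the $\phi_j$ and convergence of $\sum_j|A_j|^2\rho_1^{2(\lambda_j-1)}=\|u(\rho_1,\cdot)\|_{L^2(S)}^2$ for any fixed $\rho_1\in(0,\rho_0)$ give the tail bound
\[
\Big\|\sum_{\lambda_j>N}A_j\phi_j\,\rho^{\lambda_j-1}\Big\|_{L^2(S)}^2
\le (\rho/\rho_1)^{2(\lambda_*-1)}\|u(\rho_1,\cdot)\|_{L^2(S)}^2,
\qquad \lambda_*=\min\{\lambda_j:\lambda_j>N\},
\]
which already makes the series genuinely asymptotic in $L^2(S)$-norm.

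There is one genuine gap, however, in your treatment of edge polyhomogeneity. Proposition~\ref{sexp} requires the solution to lie in $r^{\nu}L^2$ for some edge weight $\nu>1/2$, whereas elements of $\calD_{\psi,\mu}$ generically carry a nonzero $r^{-1/2}$ leading term along each edge (constrained only by $\widehat{\calT}_\psi$) and are therefore merely in $L^2$ in the edge sense. Proposition~\ref{wexp} does apply, but only yields a weak expansion with $H^{-1/2}$ coefficients, not polyhomogeneity. The paper does not deduce this from the Section~3 results at all; it instead invokes the finer structure of the iterated-edge parametrix on the cone, as developed in \cite{MazzeoWitten2}. An alternative route would be the elliptic regularity theory for the edge boundary problem $(\Dirac,\calT_\psi)$ from \cite{MaVe}, but in either case Proposition~\ref{sexp} is not the right tool here.
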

\begin{proof}
The proof of this is essentially exactly the same as in the case of isolated conical singularities.  Multiplying by a cutoff function,
we can assume that both $u$ and $f$ are supported in $\{\rho \leq 1\}$.   Now take the Mellin transform in $\rho$,
\[
u \longmapsto u_M(\xi, \omega) = \int_0^\infty \rho^{-i\xi - 1} u(\rho,\omega)\, d\rho.
\]
This is simply the Fourier transform in the variable $t = \log \rho$.  By the support properties of $u$ we can extend $u_M$ holomorphically
to $\zeta \in \CC$ with $\zeta = \xi + i\eta$ satisfying $\eta > 0$, taking values in $L^2(S)$. 

Taking the Mellin transform of the equation $\rho \Dirac u = \rho f$, we see that
\[
(\widehat{\Dirac} + i\zeta + 1) u_M = f_M.
\]
Furthermore, since $f$ is supported away from $\rho = 0$, $f_M$ is entire in $\zeta$. If $f$ is supported away from the edges,
or even just polyhomogeneous along these edges, then $f_M$ has the same property.  Furthermore,
\[
u_M = (\widehat{\Dirac} + i\zeta + 1)^{-1} f_M.
\]

We can recover $u$ from $u_M$ by taking the inverse Mellin transform, $u(\rho,\omega) = (2\pi)^{-1} \int u_M(\zeta, \omega) \rho^{i\zeta}\, d\xi$.
At first, the integration is taken along $\mathrm{Im}\zeta = 0$, but we can shift the contour down to any line $\eta = c$, using that 
$(\widehat{\Dirac} + i\zeta)^{-1}$ is meromorphic in $\zeta$, so long as this inverse does not have a pole along that line.  However, these poles
are at the values $\lambda_j$, where $\lambda_j$ is an eigenvalue of $\widehat{\Dirac}$. (Note that we know beforehand that $u_M$ has
no poles in the upper half-plane.)  Shifting the contour down, across each such pole, produces another term $\rho^{\lambda_j - 1}$ 
in the expansion of $u$.

The polyhomogeneity of $u$ along the edges is proved using finer properties of the inverse $G$ to $\Dirac$ on this cone. This
argument can be carried out precisely as in \cite{MazzeoWitten2}.
\end{proof}
The usual caveat applies that this expansion is not necessarily convergent.    An elaboration of this argument also proves the
\begin{proposition}
If $\Dirac u = f$ and $f$ polyhomogeneous on the space obtained by blowing up first the vertex and then the singular edges,
then $u$ is also polyhomogeneous on this space.
\end{proposition}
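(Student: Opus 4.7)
The plan is to derive the polyhomogeneity of $u$ from an iterated edge parametrix $G$ for $\Dirac$, whose construction is outlined in Section~6. The key property is that the Schwartz kernel of $G$ is polyhomogeneous on a double space built by iteratively blowing up $M_\Sigma \times M_\Sigma$ along the diagonal and its boundary strata, with prescribed index sets at each boundary face: at each vertex face $S_i$ these come from the discrete spectrum of $(\widehat{\Dirac}_i, \widehat{\calD}_\psi)$, while at each edge face $F_j$ they come from the indicial set $\ZZ + \tfrac{1}{2}$. One has $G \Dirac = I - R$ on $\calD_{\psi, \mu}$, where $R$ is a smoothing remainder whose kernel is likewise polyhomogeneous on the double space.

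Given $\Dirac u = f$ with $f$ polyhomogeneous on $M_\Sigma$, I would write $u = Gf + Ru$ and treat the two summands separately. For $Gf$, polyhomogeneity follows from Melrose's pushforward theorem in its iterated edge form: the convolution of a polyhomogeneous kernel with a polyhomogeneous input is polyhomogeneous, with index sets given by the extended-union rule applied to the left-face index sets of $G$ and those arising from pairing $f$ against the right-face behavior of $G$. For $Ru$, the smoothing nature of $R$ together with the polyhomogeneity of its kernel guarantees that $Ru$ is polyhomogeneous with index sets dictated purely by the left face of the kernel of $R$, independently of any finer regularity of $u$ beyond its weighted $L^2$ membership. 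Combining these two facts gives polyhomogeneity of $u$ on $M_\Sigma$ with explicit bounds on the index sets in terms of the indicial data at each face.

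The main obstacle lies not in this functional-analytic step but in verifying that the parametrix $G$ has the asserted polyhomogeneous kernel with the correct index sets at every face, \emph{including the corners} $S_i \cap F_j$. At such a corner one needs joint polyhomogeneity in both the radial variable $\rho$ at $S_i$ and the edge distance $r$ at $F_j$, rather than merely polyhomogeneity along each face separately. This compatibility is built into the iterated edge calculus by construction: one first inverts $\widehat{\Dirac}_i$ on the cross-section, which via the Mellin/residue argument from the previous proposition produces the radial expansion at $S_i$, and then corrects along each incoming edge using the edge parametrix of Section~3, with the compositions arranged so that the resulting kernel is jointly polyhomogeneous at each corner with index sets consistent at both faces. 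Once this structural property of $G$ is in hand, the conclusion of the proposition follows at once from the decomposition $u = Gf + Ru$.
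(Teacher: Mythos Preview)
Your approach is correct and is the standard way to extract polyhomogeneity from a parametrix in the iterated edge calculus. The paper does not actually give a proof of this proposition; it merely asserts that it follows from ``an elaboration'' of the Mellin-transform argument used in the immediately preceding proposition (where $f$ was compactly supported away from the vertex). That earlier argument obtains the expansion at the vertex face by contour-shifting the Mellin transform of $u$ across the poles of $(\widehat{\Dirac} + i\zeta + 1)^{-1}$, and then defers polyhomogeneity along the edges to ``finer properties of the inverse $G$''.

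Your formulation via $u = Gf + Ru$ and the pushforward theorem is the natural packaging of both of these steps at once: the Mellin analysis is precisely what determines the index sets of $G$ and $R$ at the vertex front face, while the edge-calculus parametrix of Section~3 determines the index sets at the edge faces. So the two routes are really the same argument viewed from different angles --- the paper emphasizes the direct Mellin computation at the vertex, while you emphasize the kernel structure of $G$ that encodes it. Your version has the modest advantage of making explicit that joint polyhomogeneity at the corners $S_i \cap F_j$ is exactly the compatibility condition built into the iterated edge double space, a point the paper leaves implicit.
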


Finally, there is a partial regularity statement:
\begin{proposition}
Suppose that $u \in \calD_{\psi,\mu}$ for some $\mu > 0$ and $\Dirac u = f \in L^2$.  Then
\[
u = \sum A_i \phi_i(\omega) \rho^{\gamma_i} + \tilde{u} 
\]
where the $\gamma_i$ lie in the set of indicial roots in the interval $(-3/2, \mu-1/2)$, and $\tilde{u} \in \calD_{\psi,\mu+1}$.
\end{proposition}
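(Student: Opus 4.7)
The plan is to localize near each vertex $q$ and run a Mellin transform argument in the same spirit as the preceding two propositions, but now keeping careful track of weights. Multiply $u$ by a cutoff $\chi(\rho)$ equal to $1$ for $\rho$ small and vanishing for $\rho \geq 1$, and observe that the commutator $[\Dirac,\chi]u$ is compactly supported away from the vertex, where the decomposition has already been established by the edge/conic parametrix theory of Section~3. Thus it suffices to prove the statement under the additional assumption that $u$ and $f$ are both supported in $\{\rho \leq 1\}$ and to sum the contributions from the finitely many vertices.

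Using the factorization $\rho\Dirac = \mathtt{cl}(\del_\rho)\big[(\rho\del_\rho + 1) - \widehat{\Dirac}\big]$, the equation $\Dirac u = f$ becomes, after Mellin transformation in $\rho$,
\[
\big[(i\zeta + 1) - \widehat{\Dirac}\big]\, u_M(\zeta,\cdot) = g_M(\zeta,\cdot), \qquad g := \mathtt{cl}(\del_\rho)^{-1}\rho f.
\]
Since $f \in L^2(\rho^2\,d\rho\,d\omega)$, one has $g \in \rho\, L^2$, and $g_M$ is holomorphic with values in $L^2(S)$ in the half-plane above a line determined by this weight; similarly, $u \in \rho^\mu L^2$ means $u_M$ is defined in the $L^2(S)$-sense on the horizontal line associated to $\mu$ and, thanks to the support condition, is holomorphic above it. The key input is that $\widehat{\Dirac}$ on the self-adjoint domain $\widehat{\calD}_\psi$ has compact resolvent, so $\big[(i\zeta+1)-\widehat{\Dirac}\big]^{-1}$ is a meromorphic family of operators $L^2(S) \to \widehat{\calD}_\psi$ with simple poles exactly at the values $\zeta = -i\gamma_j$, where $\gamma_j = \lambda_j - 1$ ranges over the indicial roots, and the residue at each pole is a constant times the spectral projector onto the $\lambda_j$-eigenspace of $(\widehat{\Dirac}, \widehat{\calD}_\psi)$.

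The expansion is then obtained by a contour shift. Starting from the inverse Mellin representation of $u$ on the contour $L$ associated with the weight $\mu$, we shift to the contour $L'$ associated with the weight $\mu+1$. Each simple pole crossed contributes a residue of the form $A_j\phi_j(\omega)\rho^{\gamma_j}$, where $\phi_j$ is an eigensection of $\widehat{\Dirac}$ in $\widehat{\calD}_\psi$; in particular every such term automatically satisfies the edge boundary condition $\widehat{\calT}_\psi$ at $q$. The integral along $L'$ defines $\tilde{u}$, which lies in $\rho^{\mu+1}L^2$ by the weighted Plancherel identity for the Mellin transform. Since each indicial term $\phi_j\rho^{\gamma_j}$ is annihilated by $\Dirac$ and satisfies the boundary conditions, $\tilde u$ still solves $\Dirac\tilde u = f$ and lies in $\calD_{\psi,\mu+1}$.

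The main obstacle is controlling the horizontal pieces of the contour deformation as $|\mathrm{Re}\,\zeta| \to \infty$; this requires uniform high-frequency resolvent bounds for $\widehat{\Dirac}$ on $\widehat{\calD}_\psi$, which are available because of self-adjointness and the compactness of the resolvent. A secondary technical point is that higher-order poles of $[(i\zeta+1) - \widehat{\Dirac}]^{-1}$ would generate logarithmic factors in the expansion, but self-adjointness of $\widehat{\Dirac}$ on $\widehat{\calD}_\psi$ forces all poles to be simple, so each residue is a clean finite sum of eigensection terms. Once these analytic estimates are in place, the decomposition and the claimed weighted $L^2$ membership of $\tilde u$ both follow directly from the Mellin Plancherel theorem applied on the two contours $L$ and $L'$.
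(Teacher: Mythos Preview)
The paper states this proposition without proof, so there is nothing to compare against directly. Your Mellin transform and contour-shift argument is the natural method and mirrors exactly the proof the paper gives for the immediately preceding proposition (where $f\in\calC^\infty_0$ is supported away from $\rho=0$); the only difference is that here $g_M$ is holomorphic merely in a half-plane rather than entire, which caps how far the contour can be moved. The ingredients you identify---self-adjointness and compact resolvent of $(\widehat{\Dirac},\widehat{\calD}_\psi)$ forcing a meromorphic operator family with simple poles, residues given by eigensection projectors, and the Mellin--Plancherel identity for the weighted $L^2$ bound on $\tilde u$---are precisely the right ones, and your observation that each term $\phi_j\rho^{\gamma_j}$ automatically satisfies the edge boundary condition (because $\phi_j\in\widehat{\calD}_\psi$) is what upgrades $\tilde u\in\rho^{\mu+1}L^2$ to $\tilde u\in\calD_{\psi,\mu+1}$.

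One point deserves more care. You assert that $g=\rho f\in\rho L^2$ permits a one-unit downward shift of the contour, but the half-plane of holomorphy this buys for $g_M$ corresponds to weight $1$, not weight $\mu+1$; whether this reaches the target line depends on the sign of $\mu-1$, and for $\mu>1$ one must instead use the a priori inclusion $f\in\rho^{\mu-1}L^2$. Relatedly, a literal contour shift from the line for weight $\mu$ to the line for weight $\mu+1$ picks up the indicial roots in $(\mu-\tfrac32,\,\mu-\tfrac12)$, not the interval $(-\tfrac32,\,\mu-\tfrac32)$ printed in the statement; the latter would consist of terms \emph{not} in $\rho^\mu L^2$ and hence cannot appear in $u$. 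Your argument is internally consistent with the former interval, so this is likely a misprint in the paper rather than an error on your part, but you should flag the discrepancy and be explicit about which strip of poles your contour actually crosses.
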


\subsection{Global theory}
We now state the relevant global theory for $\Dirac$ on $M \setminus \Sigma$.  This is based on the principle that
local parametrices for $\Dirac$ constructed near every point of $M$ can be patched together to give a global parametrix
with compact remainders.   In fact, it suffices to patch together a parametrix in the edge calculus defined outside
the union of some small balls around each vertex with local parametrices in each of these balls. 

Enumerating the vertices as $\{q_1, \ldots, q_N\}$, and choosing $\varepsilon$ sufficiently small, let $\calU$ 
denote the union of balls $B_\varepsilon(q_i)$ and $\calV$ the complement of the union of the closed balls 
$\overline{B}_{\varepsilon/2}(q_i)$.    The parametrix $G_\calU$ can be constructed exactly as in Section 3.
It satisfies $I - \Dirac \circ G_\calU = R_{\calU, 1}$, $I - G_\calU \circ \Dirac = R_{\calU, 2}$, where 
the error terms $R_{\calU, i}$ can be chosen to map into smooth sections over $\calU$ which vanish
to infinite order along $\Si \cap \calU$.   The range of $G_\calU$ is precisely the domain $\calD_\psi$ over $\calU$.

Next, assume first that the metric $g$ is Euclidean near $q_j$ and $\Si$ is a union of straight rays in this ball.
Given a nonindicial weight $\mu$ at $q_j$, we constructed in the proof of Proposition \ref{4.19} an operator 
\[
G_{q_i}: \rho^{\mu-1}L^2( B_{\varepsilon}(q_i)) \longrightarrow \calD_{\psi,\mu}(B_\varepsilon(q_i)) \subset \rho^\mu L^2(B_\varepsilon(q_i))
\]
which satisfies 
\[
I - \Dirac \circ G_{q_i} = 0, \quad I - G_{q_i} \circ \Dirac = 0.
\]
The error terms vanish identically in this model case.  Let $G_{\calV}$ denote the `union' of these local inverses.

In the general case, 
\[
\Dirac = \mathtt{cl}(\del_\rho) \left( (\frac{\del\,}{\del \rho} + \frac{1}{\rho}) - \frac{1}{\rho} \widehat{\Dirac}(\rho)\right) + E;
\]
here $E$ is an error term that decays relative to the leading part (the expression in parentheses on the right).  The small complication
here is that $\widehat{\Dirac}(\rho)$ is a smoothly varying family of Dirac operators on $S^2 \setminus \{p_1, \ldots, p_{2k}\}$ and
we must consider its action on a $\rho$-dependent domain $\widehat{\calD}_{\psi(\rho)}$.  The limit as $\rho \to 0$
is simply $\widehat{\Dirac}(0)$ on $\widehat{\calD}_{\psi(0)}$.    The construction of a parametrix with good remainder terms 
is a technical exercise which is straightforward but lengthy.  

Now choose a partition of unity $\{\chi_U, \chi_V\}$ relative to the open cover $\calU \cup \calV$, and choose
$\tilde{\chi}_\calU \in \calC^\infty_0(\calU)$ to equal $1$ on the support of $\chi_\calU$, and similarly,
$\tilde{\chi}_\calV \in \calC^\infty_0(\calV)$ to equal $1$ on the support of $\chi_\calV$. Finally, define
\[
G = \tilde{\chi}_\calU G_\calU \chi_\calU + \tilde{\chi}_\calV G_\calV \chi_\calV.
\]
This operator maps $\rho^{\mu-1}$ into $\calD_{\psi,\mu}$. A straightforward calculation yields that both
\[
I - \Dirac \circ G\ \ \mbox{and}\ \ \ I - G \circ \Dirac
\]
are compact operators.

We have now proved the central
\begin{theorem} Let $\mu$ be any weight parameter which is not indicial at any one of the vertices $q_i$. Then
\[
\Dirac: \calD_{\psi, \mu} \longrightarrow \rho^{\mu-1}L^2(M \setminus \Si; \slS \otimes \calI)
\]
is Fredholm.
\label{mFt}
\end{theorem}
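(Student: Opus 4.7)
The plan is to establish Fredholmness by constructing a global parametrix $G$ for $\Dirac$ on $\calD_{\psi,\mu}$ via patching local parametrices, and then verifying that the remainder terms $I-\Dirac\circ G$ and $I-G\circ\Dirac$ are compact on the relevant weighted Sobolev spaces. The starting point is the two natural open regions: a neighborhood $\calU=\bigsqcup B_\varepsilon(q_i)$ of the vertex set, and the complement $\calV=M\setminus \bigsqcup\overline{B}_{\varepsilon/2}(q_i)$ of slightly smaller balls. On $\calV$, the graph reduces to a collection of smooth arcs, so the full edge calculus theory of Section 3 applies, and the boundary problem $(\Dirac,\calT_\psi)$ gives rise to an edge parametrix $G_\calV$ for which $I-\Dirac\circ G_\calV$ and $I-G_\calV\circ\Dirac$ are residual edge operators (smoothing, with image in sections decaying and polyhomogeneous along the lifted edges).

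On $\calU$, I would first treat the model case where the metric is Euclidean near each $q_i$ and $\Si$ consists of straight rays. Here $\Dirac$ is given by \eqref{Dircps}, and the cross-sectional operator $\widehat{\Dirac}$ acting on $\widehat{\calD}_\psi$ is self-adjoint with discrete spectrum, as established in Sections 4.3--4.4. The Mellin transform in $\rho$ then diagonalizes the model, and since $\mu$ is nonindicial at every vertex (i.e.\ $\mu-1\notin\Lambda_i$), the inverse $(\widehat{\Dirac}+i\zeta+1)^{-1}$ is holomorphic on the contour $\mathrm{Im}\,\zeta=\mu-1$. The inverse Mellin transform along this contour yields an exact inverse $G_{q_i}:\rho^{\mu-1}L^2(B_\varepsilon(q_i))\to \calD_{\psi,\mu}(B_\varepsilon(q_i))$ for the model conic operator. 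Next, for a general metric and a general smoothly embedded graph, I would write $\Dirac = \Dirac_0 + E$ near each vertex, where $\Dirac_0$ is the exact model and $E$ consists of lower-order terms (smooth multiples of $\rho\del_\rho$ and smoothly varying $\rho$-families of first-order operators on $S^2$, with the cross-section $\widehat{\Dirac}(\rho)$ acting on the $\rho$-dependent domain $\widehat{\calD}_{\psi(\rho)}$). The correction of $G_{q_i}$ by the Neumann series $G_{q_i}\sum_k (E G_{q_i})^k$, or rather its asymptotic summation, yields a local parametrix with residual remainders that gain a power of $\rho$.

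Having $G_\calU$ and $G_\calV$ in hand, I then choose a partition of unity $\{\chi_\calU,\chi_\calV\}$ subordinate to $\calU\cup\calV$ together with cutoffs $\tilde\chi_\calU,\tilde\chi_\calV$ equal to $1$ on the supports of $\chi_\calU,\chi_\calV$ respectively, and set
\[
G = \tilde\chi_\calU G_\calU \chi_\calU + \tilde\chi_\calV G_\calV \chi_\calV.
\]
A direct calculation gives
\[
I-\Dirac\circ G = \sum_\star (1-\tilde\chi_\star)\chi_\star + \sum_\star [\Dirac,\tilde\chi_\star] G_\star \chi_\star + \sum_\star \tilde\chi_\star R_{\star,1}\chi_\star,
\]
with an analogous expression for $I-G\circ\Dirac$. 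The first sum vanishes since $\tilde\chi_\star=1$ on the support of $\chi_\star$. The commutator terms $[\Dirac,\tilde\chi_\star]$ are compactly supported inside the overlap $\calU\cap\calV$, so they are honest first-order operators with compactly supported (away from $\Si$) coefficients, composed with the smoothing/residual parts of $G_\star$; the composition therefore maps into a space which embeds compactly into $\rho^\mu L^2$ by Rellich. The remainder terms $R_{\star,1}$ are residual (smoothing and gaining weight), hence compact on $\rho^{\mu-1}L^2$. Combining these yields compactness of $I-\Dirac\circ G$ and $I-G\circ\Dirac$, and the usual functional-analytic argument then gives that $\Dirac:\calD_{\psi,\mu}\to\rho^{\mu-1}L^2$ has finite-dimensional kernel, closed range, and finite-dimensional cokernel.

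The main obstacle I anticipate is the construction and control of the local parametrix $G_{q_i}$ in the non-model case: one must deal with an iterated edge structure where the cross-section is itself singular, the cross-sectional operator $\widehat{\Dirac}(\rho)$ varies with $\rho$, and the associated domain $\widehat{\calD}_{\psi(\rho)}$ varies as well. Controlling the Mellin inversion and the remainder terms along the edges simultaneously --- so that the resulting operator maps into $\calD_{\psi,\mu}$ and the error terms are genuinely residual in the iterated edge calculus --- is the technical heart of the argument, and is where the machinery from \cite{ALMP} and \cite{MazzeoWitten2} is required.
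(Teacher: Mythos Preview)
Your plan is essentially the paper's proof: cover $M$ by vertex balls $\calU$ and their complement $\calV$, take the edge-calculus boundary parametrix of Section~3 on $\calV$ and the conic Mellin-transform inverse near each vertex, patch with a partition of unity, and check that the remainders are compact. The paper actually says less than you do, asserting compactness of the patched remainders only by ``a straightforward calculation''.

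One point in your commutator analysis needs correction. The overlap $\calU\cap\calV$ is an annular shell around each vertex through which the edges of $\Si$ pass, so $[\Dirac,\tilde\chi_\star]=\cl(d\tilde\chi_\star)$ (a zeroth-order multiplication operator, not first order) is \emph{not} supported away from $\Si$, and it is composed with the full $G_\star$ rather than with a residual piece. Classical Rellich does not directly apply, since $H^1_e\hookrightarrow L^2$ fails to be compact near an edge. What actually gives compactness is that $G_\star$ lands in $\calD_{\psi,\mu}$, and on the annulus (where $\rho\sim\varepsilon$) this domain sits inside $H^1\bigl((\varepsilon/2,\varepsilon)_\rho;L^2(S)\bigr)\cap L^2\bigl((\varepsilon/2,\varepsilon)_\rho;\widehat\calD_\psi\bigr)$; the compact resolvent of $(\widehat\Dirac,\widehat\calD_\psi)$ makes $\widehat\calD_\psi\hookrightarrow L^2(S)$ compact, and an Aubin--Lions type argument then yields the required compact embedding into $L^2$ of the annulus.
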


\subsection{Calculation of the index}
We come at last to our main theorem, namely a formula for the index of $(\Dirac, \calD_{\psi, \mu})$ when 
$\mu > 3/2$.   The weight $3/2$ is critical for the geometric interpretation since $\rho^\gamma \in \rho^\mu L^2$ 
implies $\gamma > \mu - 3/2$, so if $\mu > 3/2$ then we are restricting to sections which decay at each
vertex, which is natural for the deformation problem since it corresponds to variations which fix the vertices.
 The restriction $\mu > 5/2$ is also natural since it corresponds to variations which fix the incoming tangents at that vertex.

The proof has two steps. We first have to calculate the index of $\Dirac: \calD_{\psi,\mu} \to \rho^{\mu-1}L^2$ at some
special value of $\mu$ and then use an analogue of the relative index theorem to calculate the change of index as $\mu$
crosses other indicial weights.  As before, denote by $\Ind(\mu)$ the index of $\Dirac: \calD_{\psi,\mu} \to \rho^{\mu-1}L^2$.

Just as for the punctured sphere, the formal adjoint of $(\Dirac, \calD_{\psi, \mu})$ is $(\Dirac, \calD_{\psi, 1-\mu})$. 
Indeed, if $u \in \calD_{\psi,\mu}$, then $\Dirac u \in \rho^{\mu-1}L^2$, so the pairing $\langle \Dirac u, v \rangle$ is well-defined 
when $v \in \rho^{1-\mu}L^2$ and the boundary terms at the vertices all vanish. Their vanishing implies $\langle \Dirac u, v \rangle = \langle u, \Dirac v\rangle$.
We conclude from this that if $\mu = 1/2$ is not an indicial weight, then $(\Dirac, \calD_{\psi, 1/2})$ is self-adjoint, and hence 
$\Ind(1/2) = 0$.

If $1/2$ is an indicial weight we must argue differently.  We argue as in Corollary \ref{jumpind}.  First note that the relative index 
theorem \cite[Theorem 6.5]{Melrose93_AtiyahPatodiSingerBook} still holds in this setting. Indeed, the proof of that theorem
involves integrations by parts on the cross-section, and these are justified by our choice of boundary conditions. The argument 
then proceeds exactly as if the cross-section were a closed manifold. This gives that $\Ind(1/2 + \varepsilon) - \Ind(1/2-\varepsilon)$ 
is the negative of the sum over all vertices $p$ of the multiplicity of the indicial weight $1/2$.  Since the problem is symmetric at weight $1/2$
(or more accurately, the dual of $(\Dirac, \calD_{\psi,\mu})$ is $(\Dirac, \calD_{\psi,1-\mu})$), we also have that 
$\Ind(1/2-\varepsilon) = - \Ind(1/2 + \varepsilon)$. Thus arguing exactly as in  Corollary \ref{jumpind}, we see that this
jump in index across this particular indicial weight is an even number. In that setting that value equaled $2k$, the number of 
punctures on the sphere, but here it is just some global invariant, which we write as $-2H$.  Thus
\[
\Ind(1/2 + \varepsilon) = -H.
\]

Finally, let $\mu > 1/2$ be any nonindicial weight.  Then
\[
\Ind(\mu) - (-H) = \Ind(\mu) + H = \sum_{i=1}^N\sum_{\gamma^i_j \in (0, \mu-1/2) \cap \Lambda_i} M^i_{\gamma^i_j},
\]
where the sum is over all indicial roots $\gamma^i_j \in \Lambda_i$ over all vertices $q_i$, where $0 < \gamma^i_j < \mu-1/2$, and 
$M^i_{\gamma^i_j}$ is algebraic multiplicity of that indicial root. 

\section{Parametrix constructions} 
We conclude this paper by briefly sketching the parametrix constructions used throughout this paper. Some of this was
discussed already in Section 3.5 when $\Si$ is smooth, but it may be helpful to contrast the various types of operators we have
encountered all in one place. 

There are three different levels of degeneracy: the conic singularity exhibited by the Dirac operator on the punctured $2$-sphere, 
the simple edge singularity on $M^3$ when $\Si$ is a smooth embedded curve, and the iterated edge singularity when $\Si$ is 
an embedded graph.  The parametrices constructed in each of these three cases lie in what are called the $b$-, edge and iterated
edge calculi. Each such `calculus' is a collection of pseudodifferential operators which is almost closed under composition (if it
were closed under composition we would call it an algebra rather than a calculus), with degeneracies modelled on the singular behavior
of the Dirac operator in each of these cases.  

The goal in each setting is to construct an approximate inverse to the Dirac operator, and from this a generalized inverse.
To do so, we construct a new adapted class of pseudodifferential operators, the elements of which have the same type 
of scaling and degeneracy properties as the singular operator in question.  Each of these three cases relies on an
ansatz about the putative geometric nature of the Schwartz kernels of these operators. The calculus is then determined
by fixing this `geometric type' for the Schwartz kernels of constituent elements.  One must show that the corresponding
class of operators is closed under composition (when it is defined) and has reasonable mapping properties between
weighted Sobolev (and H\"older) spaces.  There is also a multi-step symbol calculus which is used in the construction of
parametrices for the elliptic operators in this setting, and in particular the corresponding Dirac operator.  If this 
can all be done, the ansatz will have been vindicated. 

The $b$-calculus was historically the first to be developed, and this was done by Melrose; treatments can be found in
\cite{Melrose93_AtiyahPatodiSingerBook} and \cite{Mazzeo91_EllTheoryOfDiffEdgeOp}.   The edge calculus was constructed soon after, see
\cite{Mazzeo91_EllTheoryOfDiffEdgeOp}, with further developments in \cite{MaVe} and \cite{Usula}, \cite{Usula2}. The paper 
\cite{MazzeoWitten1} contains an expository section about this theory.  The iterated edge calculus has been partially 
developed in several settings, see \cite{MazzeoMontcouquiol11_Stoker}, and \cite{MazzeoWitten2}. A more systematic
and comprehensive treatment is forthcoming. We provide here a summary overview of what is involved in each case.  
We present these three settings in sequence to illustrate how the increasing complexity of degeneracies must  be handled. 

The first step is to pass from the singular space to a manifold with boundary or corners. This is done using real blow-ups,
as described and used extensively above. Thus the punctured sphere $S^2 \setminus \{p_1, \ldots, p_N\}$ is replaced
by the manifold with boundary obtained by blowing up each $p_j$.  For the $3$-submanifold $M$ with embedded curve $\Si$
we blow up $\Si$, resulting in a manifold with boundary $M_\Si$, where the boundary is the total space of a circle bundle over $\Si$.
Finally, if $\Si$ is an embedded graph, we first blow up the vertices of the graph, and then the edges. The resulting space $M_\Si$
is a manifold with corners, with two types of boundary faces, one covering the vertices and the other the edges.  For simplicity
here we refer to the blown up space in each instance as $X$. 

A generalized inverse is an operator $G$ satisfying 
\[
\Dirac \circ G = \mathrm{Id} - \Pi_2, \ \ G \circ \Dirac = \mathrm{Id} - \Pi_1,
\]
where $\Pi_1$ and $\Pi_2$ are the orthogonal projectors onto the cokernel and the kernel of $\Dirac$.   We have omitted any mention
of boundary conditions; these are usually encoded in the choice of domain $\calD$ in a given weighted $L^2$ space on which 
$\Dirac$ acts.  Boundary conditions are not needed if there exists a weight $\mu$ such that 
\begin{equation}
\Dirac: \rho^\mu H^2_\sharp \to \rho^{\mu-1} L^2
\label{DmF}
\end{equation}
is Fredholm. Here $H^1_\sharp $ denotes either $H^1_b$, $H^1_e$ or $H^1_{\ie}$, the Sobolev spaces specifically
adapted to those particular degeneracies.  Of course, whether \eqref{DmF} is Fredholm or not is not known beforehand,
and is only determined if one can construct a parametrix with compact remainders. 

A parametrix for $\Dirac$ is a pseudodifferential operator $B \in \Psi^{-1}_\sharp(M; \sS\otimes \calI)$ which defines 
a bounded operator $B: r^{\mu-1}L^2 \to r^\mu H^1_\sharp$ and satisfies 
\[
\Dirac \circ B = I - R_2,\ \ G \circ \Dirac = I - R_1,
\]
where the $R_j \in  r^\epsilon \Psi^{-\infty}_\sharp$ are smoothing operators which map $r^{\mu-1}L^2 \to r^{\mu-1+\epsilon}H^\ell_\sharp$
and $r^\mu L^2 \to r^{\mu+\epsilon}H^\ell_\sharp$, respectively, for all $\ell \geq 0$ and some $\epsilon > 0$.  The $L^2$ Arzela-Ascoli
theorem shows that these error terms are compact. Applying standard functional analysis arguments, we deduce that 
$\Dirac$ is Fredholm, and hence for abstract reasons has a generalized inverse $G$, i.e., an operator which is an inverse
from the range to the orthogonal complement of the nullspace. A standard argument, see \cite {Mazzeo91_EllTheoryOfDiffEdgeOp}[Section 6],
shows that this generalized inverse $G$ is an element of $\Psi^{-1}_\sharp$.

The next simplest situation, which is directly relevant to the problem of $\Z_2$ harmonic spinors, is when \eqref{DmF} is semi-Fredholm, i.e.,
where this map has closed range for some particular $\mu$ and either a finite dimensional kernel or cokernel, but not both.  Suppose that
it has a finite dimensional nullspace.  We can reduce to the Fredholm case by considering the second order operator $\Dirac^* \Dirac$.  
A basic example to keep in mind is when $\mu=1$ and $\Si$ is a smooth curve. Then $\calD^*$ is identified with
$\Dirac: L^2 \to r^{-1}H^{-1}_e$, so $\calD^* \calD: r H^1_e \to r^{-1}H^{-1}_e$, and this latter map is Fredholm.
If $\mathcal G$ is a Fredholm parametrix for this second order operator, then $\mathcal G \circ \Dirac^*$ is a left parametrix
for $\Dirac$, which implies that $\Dirac$ has finite dimensional nullspace and closed range.   

We defer discussion of the case where boundary conditions are needed since this requires a slightly more elaborate setup.

We next outline the steps needed to construct a parametrix in the Fredholm case.   The main idea is that, by the Schwartz kernel theorem, 
the parametrix $B$ is a distribution on $X \times X$, but must exhibit features near $\del X$ which reflect the degeneracy of $\Dirac$.
These features are encoded in the assertion that $B$ is polyhomogeneous on a certain blowup of $X^2$.  The blowups needed
in each of the three cases are slightly different:
\begin{itemize}
\item In the case of isolated singularities, we replace $X^2$ by the $b$ double space $X^2_b$. This is defined by blowing up the corner 
$(\del X)^2$ in $X^2$. Actually, in our case, $\del X$ has multiple components, and we only blow up the components of $(\del X)^2$ 
which intersect the diagonal of $X^2$.  Using local coordinates $(r,\theta)$ near a given boundary component, then $(r,\theta, \tilde{r},
\tilde{\theta})$ are local coordinates near the associated corner, and we are blowing up the entire codimension two set $\{r = \tilde{r} = 0\}$.
This can be seen as introducing polar coordinates $R, \omega$ where $R\cos \omega = r$, $R \sin \omega = \tilde{r}$, and attaching
the entire $R = 0$ interval $0 \leq \omega \leq \pi/2$ for each $y, \tilde{y}$.  The space $X^2_b$ has a new front face $\fff$.
\item  When $\Si$ is a smooth curve, we replace $X^2$ by the edge double space $X^2_e$, which is defined by blowing up the
fiber diagonal of $(\del X)^2$. In local cylindrical coordinates $(r,\theta,y)$ as used earlier in this paper, we have coordinates
$(r,\theta,y,\tilde{r},\tilde{\theta}, \tilde{y})$ on $X^2$ near the corner $r = \tilde{r} = 0$, and we blow up the
submanifold $\{r = \tilde{r} = 0, y = \tilde{y})$. This corresponds to introducing local spherical coordinates $R$ and $\omega \in S^2_{++}$,
$R \omega = (r, \tilde{r}, y - \tilde{y})$. Here $S^2_{++}$ is the spherical orthant $\{\omega = (\omega_0, \omega_1, \omega_2: \omega_0, \omega_1 \geq 0\}$.
The front face $R=0$ created in this blowup is now fibered by copies of $S^2_{++} \times S^1_\theta$, with base $\Si\times S^1$. This can be seen using the local
coordinates $(R, \omega, \theta, \tilde{\theta}, \tilde{y})$. 
\item Finally, when $\Si$ is a graph, then $X$ has two types of boundary faces, those covering the edges of $\Si$ and those covering the vertices.
We can introduce local coordinates $\rho, r, \theta$ near a corner where these faces intersect, see Section 2.2. Thus $\rho$ is comparable
to the variable $y$ along the edges, and $(r,\theta)$ are coordinates on the punctured sphere (blown up) around a particular puncture.   
In terms of coordinates $(\rho, r, \theta, \tilde{\rho}, \tilde{r}, \tilde{\theta})$ on $X^2$, the iterated edge double space $X^2_{\ie}$ is defined by 
first blowing up the corner $\{\rho = \tilde{\rho} = 0\}$, and then the submanifold $\{ r = \tilde{r} = 0, \rho = \tilde{\rho}\}$. 
In other words, we first do a $b$ blowup for the faces covering the vertices of $\Si$ and then an edge blowup for the faces covering the edges.
This space has two different front faces, which we write as $\fff_\rho$ and $\fff_r$, respectively.
\end{itemize}

The parametrix construction in each of these cases proceeds, as in so many other arguments in microlocal analysis, by finding successively
good approximations to the object one wants. These approximations are chosen to make the error terms $R_k^{(1)} =  I - B_k \Delta$,
$R_k^{(2)} = I - \Dirac B_k$ successively smoother and `smaller'.   Here, to be very specific to our problem, we are writing 
$\Delta = \Dirac^* \Dirac$. There are two crucial steps (as well as some later ones that
are less essential, but yield even nicer error terms).  The first uses the classical symbol calculus along the lift of the diagonal
to $X^2_\sharp$ to ensure that the error terms are operators of order $-\infty$. In other words, the initial contribution to the
parametrix is chosen so that the error term has no singularity along the diagonal.  For nondegenerate elliptic operators, this
is the standard elliptic parametrix construction. Here, of course, $\Dirac$ appears to become singular at $\del X$, so this does
not seem so practicable. However, the blowups $X^2_\sharp$ have been chosen precisely so that the lift of the Dirac operator
to this space, in each of the three cases, is uniformly elliptic along the entire lifted diagonal, at least up to an overall singular
factor of the boundary defining function.  This means that the classical elliptic parametrix construction may be carried
out here as well. 

The reason why the process needs to continue is that the error terms $R_0^{(j)}$ obtained in this first stage
are smoothing but do not provide any extra decay beyond what is `expected'. Thus $R_0^{(j)}: r^{\mu-2}L^2 \to r^{\mu}H^\ell_\sharp$.
We need to find a correction $B_1$ to the parametrix $B_0$ so that the corresponding remainders $R_1^{(j)}$ are compact
operators, which would be the case if $R_1^{(j)}: r^{\mu-2} L^2 \to r^{\mu+1}H^\ell_\sharp$ (any improvement, e.g. to weight $r^{\mu+\epsilon}$
would be sufficient).  This in turn can be shown to hold if the restriction of the $R_1^{(j)}$ to the front face(s) of $X^2_\sharp$ vanish.

All of this suggests that we need to consider the restrictions of the equations $\Delta B_1 = B_0$, $B_1 \Delta = B_0$ to these front
faces.  It is an elementary but enormously important aspect of the theory that this restricted problem, which is
called the `normal equation' has a simple characterization:
\begin{itemize}
\item On the front face of $X^2_b$, this normal equation takes the form $\Delta_{\RR^+ \times S^1} u = f$ for some smooth compactly 
supported $f$. (More accurately, one must solve a problem of this form for each $\tilde{\theta} \in S^1$.)  Here $f$ is the restriction
of $B_0$ and $u$ the restriction of $B_1$ to the front face.   This equation can be simplified by taking the Mellin transform
in $\RR^+$, to reduce to a family of Laplace-type equations on $S^1$ with the Mellin transform variable $\zeta$ as a parameter.
This equation turns out to be solvable in the appropriate weighted space if and only if the real part of $\zeta$ is not equal
to one of the indicial roots $\ell + \frac12$, $\ell \in \ZZ$. 
\item For the simple edge setting, the normal equation is identified with the Laplace equation on $\RR^2_+ \times S^1$. Here this upper half-plane
should be regarded as identified with the quarter-sphere $S^2_{++}$ via stereographic projection.  This is a slightly more complicated
equation to solve, but turns out to be tractable on our given weighted space, once again so long as the weight is non-indicial.
The method is to take the Fourier transform in the tangential variable in $\RR^2_+$, and then perform a certain rescaling of the
other upper half-space variable, which leads to a `Bessel-type'  Laplace equation on $\RR^+ \times S^1$.  This too is solvable,
and the solutions are the restrictions of the putative correction term $B_1$ to the parametrix.
\item In the iterated edge case, one needs to solve two separate model problems, the first being a $b$-type normal equation on
$\fff_\rho$ and the second an edge-type normal equation on $\ff_r$. 
\end{itemize}
Having carried out these steps, one may then define $B_1$ to be a smooth extension of the solution to the relevant one of these
equations. Then, by construction, the error terms coming from the parametrix $B_0 + B_1$ are compact.

In \cite{Mazzeo91_EllTheoryOfDiffEdgeOp} it is shown how to refine this construction further in the $b$- and edge settings
to get remainder terms which are even smaller. This is also possible in the iterated edge setting, and details will appear in
a forthcoming paper.  The result is a parametrix for which the remainder terms vanish to infinite order  at the front face(s),
which makes certain further arguments more straightforward.

Finally, we describe briefly how to modify these arguments when boundary conditions are introduced. We shall be once again
specific to the problem of interest in this paper.  As noted above, $\Dirac$ is already Fredholm in the $b$ setting, so
let us focus on the two maps 
\[
\Dirac: r H^1_e \longrightarrow L^2, \quad \mbox{and}\qquad \Dirac: H^1_e \longrightarrow r^{-1}L^2,
\]
when $\Si$ is smooth. As we have stated carefully earlier in the paper and have outlined the proofs of above, these maps
are both semi-Fredholm, but the first has finite dimensional nullspace and infinite dimensional cokernel and the second
has infinite dimensional nullspace and finite dimensional cokernel. 

Suppose now that we have some $f \in L^2$ and we wish to solve $\Dirac u = f$.  Since the first map above has such
a large cokernel, it is unlikely that we can find a solution $u \in r H^1_e$.  However, if we embed $L^2 \hookrightarrow r^{-1}L^2$
and employ the essential surjectivity of the second map, then modulo some finite dimensional obstruction space, we
can solve $\Dirac u = f$ with $u \in L^2$.  The fact that $f \in L^2$ rather than $r^{-1}L^2$ means that the soluion
$u$ possesses some additional regularity, and has a partial asymptotic expansion as $r \to 0$, with leading pair
of coefficients $(a_0, b_0)$. A priori these are $H^{-1/2}$ functions along $\Si$.   The boundary operator $\calT_\psi$
we have defined is an algebraic condition on this pair of leading coefficients.  A more involved version of the 
parametrix construction, which is carried out in \cite{MaVe}, see also \cite{Usula}, \cite{Usula2}, provides a parametrix
$B: L^2 \to H^1_e$, with every element of the range of $B$ having a pair of leading coefficients which satisfy
this boundary condition.    This new parametrix is an adaptation of the method due originally to Calderon
and Boutet de Monvel to this edge setting.    The upshot is that we obtain a parametrix for $\Delta$ which
respects the boundary conditions and which leaves compact remainder terms.

We come at least to the final situation, where $\Si$ is a graph, and we are trying to construct the parametrix
near the vertices of $\Si$.  We can impose the boundary operator along each edge, as above.  The key observation
is that after a diffeomorphism which straightens out each edge near a vertex, we can think of
the local picture of $M \setminus \Si$ as equivalent to a neighborhood of $0$ in the cone over the punctured sphere.
On the blown up spaces, a neighborhood of some boundary face covering a vertex of $\Si$ is identified with
$[0,1)$ times the punctured sphere blown up at the punctures.  The induced operator is then well approximated
by the Diract operator on a cone over the punctured sphere.   Thus near these faces we may apply the $b$-version
of the parametrix construction above, but remembering throughout that the cross-section is no longer 
a compact manifold, but instead a punctured sphere, or surface with boundary. The Dirac operator in this
identification is just a conic operator, but where the cross-sectional operator itself is a conic Dirac operator on
the punctured sphere. As such, it is natural to identify weighted spaces $\rho^\mu L^2$.  The only remaining
big difference is that the indicial roots in this conic identification are no longer something simple like half-integers,
but are now determined by the spectrum of the induced Dirac operator (or its Laplacian) on the punctured sphere. 

This whole discussion boils down a lot of technical information spread out over several papers, but is intended
to give the reader a guide to the techniques so that the analytic conclusions which we have used throughout
the paper do not appear like ``dei ex machina''.

\section{The linearized deformation problem and its index when $\Si$ is a smooth curve}
In this final section, we connect the problem studied in this paper with its original motivation:  the Fredholm deformation 
theory of $\ZZ_2$-harmonic spinors. Our goal here is to explain why the type of boundary condition introduced and
studied in this paper is relevant to this deformation problem.   We restrict attention in this section to the case
where $\Si$ is a smooth curve without vertices.  A careful analysis of the deformation problem in the presence
of vertices involves some new features, and we shall return to this elsewhere.

By definition, an `admissible triple' $(g_0, \Sigma_0, \psi_0)$ consists of a metric $g_0$, an element $\psi_0 \in Z_1$, i.e., 
a bounded $\ZZ_2$-harmonic spinor, and its branching set $\Sigma_0$.  As first carried out in \cite{Takahashi15_Z2HarmSpinors_Arx}, 
but see also \cite{Parker-DefHS3M} for an alternate comprehensive treatment, we seek to understand the set of all nearby admissible 
triples $(g,\Sigma,\psi)$.  At the infinitesimal level, this amounts to characterizing the infinitesimal deformations $(\dot{g}, \dot{\Sigma}, 
\dot{\psi})$ of this problem, and this is what we describe here.  The more difficult analysis to `integrate' these infinitesimal deformations 
to one-parameter families $(g_t, \Sigma_t, \psi_t)$ of admissible triples is the topic of the two papers cited above. 

We consider only a restricted version of this problem where the branching set $\Si_0$ is allowed to vary, but not the metric. 
This statement must be interpreted properly.  The space of embedded smooth curves in $M$ is a Frechet manifold with tangent 
space at $\Si_0$ equal to the space of smooth sections of the normal bundle $N \Si_0$. A neighborhood $\calU$ of the zero 
section in $\calC^\infty(\Si_0, N\Si_0)$ parametrizes a neighborhood of $\Si_0$ in this space of smooth curves. 
An infinitesimal variation of the branching set is a particular section.  We may extend this section to a smooth vector field on $M$, 
and then take the corresponding one-parameter family of diffeomorphisms $F_\epsilon$.  It is much simpler in the analysis to consider 
this problem with a fixed branching set, thus we pull back the metric and equation with respect to $F_\epsilon$, and thus consider 
variations where $\Si_0$ is fixed, but where the metric $g$ varies by a diffeomorphism. The associated infinitesimal variation
$\dot{g}$ is then of a very special type, which is used extensively in the calculations below. Thus the more accurate
statement is that we consider this deformation problem with fixed branching set, but with metrics varying by diffeomorphisms only. 

The material in this section is meant to be primarily motivational, so we do not specify a Banach or Hilbert regularity on these
sections or diffeomorphisms.  This is, in fact, a  delicate issue: there are significant technical difficulties in carrying out
the  nonlinear deformation theory because of an underlying loss of regularity, and ultimately one is forced to work in the Frechet setting.

Given such a normal section $\zeta$, denote by $\Si_\zeta$ the curve $\{\exp_p( \zeta(p)): p \in \Si_0\}$.   It is more convenient to 
consider variations of admissible triples where the variation of the branching set is transferred to a variation of the metric, so to 
this end, choose a smooth map from the space $\calU$ of small normal sections to the space of diffeomorphisms $\Psi_\zeta$ of $M$ 
close to the identity which has the property that $\Psi_\zeta(\Si_0) = \Si_\zeta$ and $\Psi_\zeta$ equals the identity outside of a 
fixed small tubular neighborhood of $\Si_0$. By conjugating with these diffeomorphisms, we obtain a smooth family of Dirac 
operators $\Dirac_\zeta$ acting on spinors branching over the same fixed curve $\Si_0$. These operators $\Dirac_\zeta$ are defined 
with respect to the pulled back metric $\Psi_\zeta^* g_0$, the pulled back spin structure, and the pullback of the metric on the 
$\RR$-bundle $\calI$ with monodromy $\ZZ_2$ around $\Si_0$, and the infinitesimal variation $\dot{\Dirac}$ at $\zeta = 0$
must be computed taking into account each of these dependencies. 

Define the map
\[
\calF: H^2(\Si_0) \oplus r H^1_e( M \setminus \Si_0; \sS \otimes \calI) \longrightarrow L^2(M \setminus \Si_0; \sS \otimes \calI),
\]
by 
\[
\calF( \zeta, \phi) = \Dirac_\zeta \phi.
\]
The full deformation problem involves determining the solution space to the nonlinear equation $\calF(\zeta, \phi) = 0$, but here
we only need to study its linearization.  The computation of this linearization is complicated by all of the implicit identifications 
mentioned above, but is explained carefully by Parker \cite[Section 5.2]{Parker-DefHS3M}; we refer to that paper for details. The 
key ingredient is a well-known formula due to Bourguignon and Gauduchon for the variation of the `untwisted' Dirac operator.  
Write 
\[
\dot{g}_\zeta = \left. \frac{d\,}{d\epsilon} \Psi_{\epsilon \zeta}^* g \, \right|_{\epsilon =0},
\]
and let $\{e_1, e_2, e_3\}$ be an oriented orthonormal frame for $g_0$ with $e_3$ tangent to $\Si_0$. Then \cite[Corollary 5.9]{Parker-DefHS3M} 
states that
\begin{equation}
D \calF |_{(0, \psi_0)} ( \zeta, \phi) = \frac12 \left( - \sum_{i, j} \dot{g}_\zeta(e_i, e_j) e^i \cdot \nabla _{e_j}
+ d\, \mathrm{tr} (\dot{g}_\zeta) + \mathrm{div} (\dot{g}_\zeta + Q(\zeta))\right) \psi_0 + \Dirac_0 \phi,
\label{varDirac}
\end{equation}
where $Q$ is a linear operator in $\zeta$ of order $0$, the covariant derivative, trace and divergence are all with respect to $g_0$ 
and ``$\cdot$'' in the first term in parentheses is the Clifford multiplication.  Since we are interested in computing the index of
$D\calF$, we are free to omit $Q$, which is of lower order.  Thus, denoting the term in parentheses (without $Q(\zeta)$)
by $\calP_0(\zeta)$, we write
\[
D\calF |_{(0,\psi_0)}(\zeta, \phi) = \calP_0(\zeta) + \Dirac_0 \phi.
\]

The precise analysis of the various terms in this expression near $\Si_0$ is carried out in Sections 5 and 6 of \cite{Parker-DefHS3M}.
Only a closer examination of this formula reveals that this linearization is actually second order in $\zeta$, which explains why we have 
used $H^2(\Si_0)$ as the first component of the domain of $\calF$.  It is more easily apparent that the only term in $\calP_0$
that blows up like $r^{-1/2}$ is the one that involves differentiating $\psi_0$, i.e., 
\begin{equation}
-\frac12 \sum_{i, j} \dot{g}_\zeta(e_i, e_j) e^i \cdot \nabla _{e_j}^{g_0} \psi_0.
\label{mainterm}
\end{equation}

The remainder of this argument is due to Cliff Taubes, and we are indebted to him for explaining it to us and to Greg Parker
for passing on to us his careful notes of the argument. 

We now introduce an auxiliary object, the `stress-energy' tensor $T$ associated to any $\Phi \in Z_0$ and $\psi_0 \in Z_1$,
the kernels of $\Dirac$ in $L^2$ and $rH^1_e$, respectively. Relative to any orthonormal frame $\{e_i\}$, this is given by 
\[
T_{ij} = -\frac12 \left( \langle \sum (e_i \nabla_{e_j} + e_j \nabla_{e_i}) \psi_0, \Phi \rangle  
- \sum (\langle e_i \psi_0, \nabla_{e_j} \Phi\rangle + \langle e_j \psi_0, \nabla_{e_i} \Phi\rangle) \right).
\]
Here $\langle\cdot,\cdot\rangle$ denotes the Hermitian scalar product so that $T_{ij}$ is a complex-valued tensor. 
This has two key properties:
\begin{lemma}
The tensor $T$ is divergence free, i.e., $\sum \nabla_{e_j} T_{ij} = 0$. In addition, for any $\Phi \in Z_0$ 
\[
\langle \calP_0(\zeta), \Phi\rangle = -\frac12 \int_M  \langle \dot{g}_\zeta, T\rangle
\]
\label{divfree}
\end{lemma}
\begin{proof}
Both identities rely solely on the fact that $\Dirac_0 \Phi = \Dirac_0 \psi_0 = 0$. 

Since $T$ is tensorial, to show it is divergence-free it suffices to compute at a point $p$ and with respect to a frame such that $\nabla_{e_i} e_j(p) = 0$ 
for all $i, j$. Thus, 
\begin{equation*}
\begin{split} 
-2 \sum_j \nabla_{e_j} T_{ij}  = \sum_j & \left(\langle (e_i \nabla_{e_j}^2 \psi_0,\Phi \rangle + \langle e_i \nabla_{e_j} \psi_0, \nabla_{e_j} \Phi \rangle + 
\langle e_j \nabla_{e_j} \nabla_{e_i} \psi_0, \Phi\rangle  + \langle e_j \nabla_{e_i} \psi_0, \nabla_{e_j} \Phi\rangle\right. \\  
& \left. - \langle e_i \nabla_{e_j} \psi_0, \nabla_{e_j}\Phi\rangle - \langle e_i \psi_0, \nabla_{e_j}^2 \Phi\rangle - 
\langle e_j \nabla_{e_j} \psi_0, \nabla_{e_i} \Phi\rangle - \langle e_j \psi_0, \nabla_{e_j} \nabla_{e_i} \Phi\rangle\right). 
\end{split}
\end{equation*}
Since $\sum \nabla_{e_j}^2 = -\Dirac^2$ and Clifford multiplication by $e_j$ is skew-Hermitian, the first, fourth, sixth and seventh terms vanish, while
the second and fifth terms cancel. Hence this entire expression reduces to
\[
\sum_j \left(\langle e_j \nabla_{e_j} \nabla_{e_i} \psi_0, \Phi\rangle - \langle e_j \psi_0, \nabla_{e_j} \nabla_{e_i} \Phi\rangle \right).
\]
Now, $\nabla_{e_j}\nabla_{e_i} = \nabla_{e_i} \nabla_{e_j} +  \mathrm{Riem}_{ji}$, and $e_j \nabla_{e_i} \nabla_{e_j}= \nabla_{e_i} e_j \nabla_{e_j}$ at $p$, so we are left with
\[
\sum_j \left(\langle e_j \mathrm{Riem}_{ji} \phi_0, \Phi\rangle + \langle \phi_0, e_j \mathrm{Riem}_{ji} \Phi\rangle \right).
\]
The first term in each summand can be rewritten as $-\langle \mathrm{Riem}_{ji} \psi_0, e_j \Phi\rangle = \langle \psi_0, \mathrm{Riem}_{ji} e_j \Phi\rangle$, which reduces us to understanding the commutator $[ \mathrm{Riem}_{ji}, e_j ]$. 
Recall, cf.~\cite{Frie00} that the action of the curvature operator on spinors is given by $\mathrm{Riem}_{ji} \Psi = R_{ji} e_j e_i \Psi$ where each $R_{ji}$
is a two-form. Therefore 
\[
\sum_j e_j \mathrm{Riem}_{ji} = \sum_j e_j R_{ji} e_j e_i = - \sum_j R_{ji} e_i  = - \sum_j R_{ji} e_j e_i e_j,
\]
so the final term in our computation vanishes, and hence $T_{ij}$ is divergence-free.

As for the second identity, we compute 
\begin{equation*}
-2 \langle \calP_0(\zeta), \Phi \rangle  = \int_M   \langle \dot{g}_{ij} e_i \nabla_{e_j}\psi_0, \Phi\rangle 
- \langle \nabla_{e_j} (\mathrm{tr}\, \dot g) \,  e_j \psi_0 , \Phi \rangle + \langle \nabla_{e_j} \dot{g}_{ij} \, e_j \psi_0, \Phi \rangle.
\end{equation*}
The middle term here is of the form $\langle df \cdot \psi_0, \Phi\rangle = \langle \Dirac( f\psi_0) - f \Dirac \psi_0, \Phi\rangle = \langle \Dirac(f\psi_0), \Phi\rangle$,
so integrating and moving $\Dirac$ to the other side makes this term vanish.    Integrating by parts in the third term, we see that the whole expression equals
\[
\int_M  \dot{g}_{ij} (\langle e_i \nabla_{e_j} \psi_0, \Phi \rangle -  \langle e_j \psi_0, \nabla_{e_i} \Phi\rangle),
\]
and since the coefficient $\dot{g}_{ij}$ is symmetric in $i$ and $j$, we may symmetrize the integrand and recognize all of this as $\int_M \dot{g}_{ij} T_{ij}$, as claimed.
\end{proof}

Now observe that modulo terms which decay at least to first order at $\Si$, 
\[
\dot{g} \equiv \nabla_{e_3} h, \quad h = \begin{pmatrix} 0 & 0 & \zeta_1 \\ 0 & 0 & \zeta_2 \\ \zeta_1 & \zeta_2 & 0 \end{pmatrix}. 
\]
In fact, since $\zeta$ can be assumed to be parallel to first order in normal directions at $\Si$, we can add in derivatives in this direction and
realize $\dot{g} \equiv \delta^* \zeta$, again modulo terms vanishing to first order, where $\delta^*$ is the dual of the divergence operator $\delta$.   Now let $M_\epsilon = M \setminus D_\epsilon$, where
$D_\epsilon$ is the neighborhood of radius $\epsilon$ around $\Si$, and write $\mu$ be the $1$-form obtained by contracting $\zeta$ into $T$,
i.e., $\mu_j = \sum_i \zeta_i T_{ij}$; thus $-\delta \mu = \langle \delta^*\zeta, T \rangle - \langle \mu, \delta T \rangle$. 
Discarding (for the moment) the terms that vanish to higher order along $\Si$ and using that $\delta T = 0$, 
\begin{equation*}
\int_M \langle \dot{g}, T\rangle =  \lim_{\epsilon \to 0} \int_{M_\epsilon}  \langle \delta^*\zeta , T \rangle = - \lim_{\epsilon \to 0} 
\int_{\del M_\epsilon}  *\mu = - \lim_{\epsilon \to 0} \int_{\del M_\epsilon} \mu_r\,  r dy d\theta,
\end{equation*}
where $y$ is the parameter along $\Si$. 

Writing $T_{ir} = T( e_i, \del_r)$, $i = 1, 2$, then $\mu_r = \zeta_1 T_{1r} + \zeta_2 T_{2r}$, which can be conveniently written in complex notation as
\[
\mu_r = \frac{1}{2}\Big((\zeta_1 + i \zeta_2) (T_{1r} - i T_{2r}) + (\zeta_1 - i \zeta_2) (T_{1r} + i T_{2r})\Big).
\]
The reader should beware that the quantities $T_{1r} \mp i T_{2r}$ are not  complex conjugates of one anther since the individual terms $T_{ir}$ themselves are not necessarily real. 

We next compute these various terms 
\[
T_{ir} =-\frac{1}{2}\Big( \langle \cl(e_i) \nabla_{\del_r} \psi_0, \Phi\rangle + \langle \cl(\del_r) \nabla_{e_i} \psi_0, \Phi\rangle - \langle \cl(e_i) \psi_0, \nabla_{\del_r} \Phi\rangle 
- \langle \cl(\del_r) \psi_0, \nabla_{e_i} \Phi\rangle\Big).
\]
Each of these terms has an expansion in powers of $r$, with leading term $r^{-1}$, which is the only one that contributes to the limit of the integral. There
is also an overall factor of $1/2$ arising from differentiating $r^{\pm 1/2}$ in the expansions of $\psi_0$ and $\Phi$ with respect to $r$. Finally, even though $e_1$ and $e_2$ are not a coordinate frame, we can write $\nabla_{ e_1 - i e_2} = 2 \nabla_{\del_z}$ up to terms which vanish to higher
order in $r$ (in other words, the linear coordinates $x_1 e_1 + x_2 e_2$ on the fibers of the normal bundle of $\Si$ are identified with $z = x_1 + i x_2$.
In a similar way, we write $\cl( e_1 - i e_2) = 2 \cl(\del_z)$, with similar notion replacing $e_1 + i e_2$ by $2 \del_{\bar{z}}$. 
In terms of all of this, we have that
\begin{equation*}
\begin{split}
& (T_{1r} - i T_{2r}) = -\langle \cl(\del_z) \nabla_{\del_r} \psi_0, \Phi\rangle -  \langle \cl(\del_r) \nabla_{\del_z} \psi_0, \Phi\rangle
+ \langle \cl(\del_z) \psi_0, \nabla_{\del_r} \Phi\rangle +  \langle \cl(\del_r) \psi_0, \nabla_{\del_{\bar{z}}} \Phi\rangle, \\ 
& (T_{1r} + i T_{2r})= -\langle \cl(\del_{\bar{z}}) \nabla_{\del_r} \psi_0, \Phi\rangle -  \langle \cl(\del_r) \nabla_{\del_{\bar z}} \psi_0, \Phi\rangle
+ \langle \cl(\del_{\bar{z}}) \psi_0, \nabla_{\del_r} \Phi\rangle +  \langle \cl(\del_r) \psi_0, \nabla_{\del_{z}} \Phi\rangle.
\end{split}
\end{equation*}

We now recall that
\begin{equation*}
\psi_0 \sim \begin{pmatrix} c_1 \sqrt{z} \\ d_1 \sqrt{\bar{z}} \end{pmatrix} = \begin{pmatrix} c_1 e^{i\theta/2} \\ d_1 e^{-i\theta/2} \end{pmatrix} r^{1/2},\qquad 
\Phi \sim \begin{pmatrix} a_0/\sqrt{z} \\ b_0/\sqrt{\bar{z}} \end{pmatrix} = \begin{pmatrix} a_0 e^{-i\theta/2} \\ b_0 \, e^{i\theta/2} \end{pmatrix} r^{-1/2}
\end{equation*}
and
\[
\cl(\del_r) = \begin{pmatrix} 0 & i e^{-i\theta} \\ i e^{i\theta} & 0 \end{pmatrix},\qquad \cl(\del_z) = \begin{pmatrix} 0 & 0 \\ i & 0 \end{pmatrix}, \qquad 
\cl(\del_{\bar{z}})= \begin{pmatrix} 0 & i \\ 0 & 0 \end{pmatrix},
\]
hence
\begin{equation*}
\begin{aligned}
\nabla_{\del_r} \psi_0 &\sim \frac12 \begin{pmatrix} c_1 e^{i\theta}/2 \\ d_1 e^{-i\theta/2} \end{pmatrix} r^{-1/2},\quad  
&\nabla_{\del_r} \Phi &\sim -\frac12 \begin{pmatrix} a_0 e^{-i\theta/2} \\ b_0 e^{i\theta/2} \end{pmatrix} r^{-3/2} , \\ 
\nabla_{\del_z} \psi_0 &\sim \frac12 \begin{pmatrix} c_1/\sqrt{z} \\ 0 \end{pmatrix},\   &\nabla_{\del_{\bar{z}}} \psi_0 &\sim 
\frac12 \begin{pmatrix} 0 \\ d_1/\sqrt{\bar{z}} \end{pmatrix},\\ 
 \nabla_{\del_z} \Phi &\sim -\frac12 \begin{pmatrix} a_0/z^{3/2} \\ 0 \end{pmatrix},\ &\nabla_{\del_{\bar{z}}} \Phi &\sim -\frac12 \begin{pmatrix} 0 \\ d_1/(\bar{z})^{3/2} \end{pmatrix}.
\end{aligned}
\end{equation*}

Using all of this, we compute that as $\epsilon \to 0$,
\begin{equation*}
T_{1r} - i T_{2r} \sim -\frac{2i}{r}  c_1 \bar{b}_0,\qquad \mbox{and}\qquad 
T_{1r} + i T_{2r} \sim -\frac{2i}{r}  d_1 \bar{a}_0,
\end{equation*}
and hence
\begin{equation}
(\zeta_1 + i\zeta_2)(T_{1r} - i T_{2r}) + (\zeta_1 - i \zeta_2) (T_{1r} + i T_{2r}) = -\frac{2i}{r}\Big(\zeta_1 (c_1 \bar{b}_0 + d_1 \bar{a}_0) + \zeta_2 (d_1 \bar{a}_0 - c_1 \bar{b}_0)\Big).
\label{bdryintegrand}
\end{equation}
After rearranging terms, we can finally rewrite the {\it real} inner product 
\[
\begin{split}
& \mathrm{Re}\, \int_M \langle \calP_0(\zeta), \Phi\rangle = -\frac{1}{2}\mathrm{Re}\int_M\langle \dot{g}_{\zeta},T\rangle= 
\frac{1}{2}\mathrm{Re}\Bigg(\lim_{\varepsilon\rightarrow 0}\int_{\partial M_{\varepsilon}} \mu_r rdyd\theta\Bigg)\\
&=\frac{1}{4}\mathrm{Re}\, \Bigg(\lim_{\varepsilon\rightarrow 0}\int_{\partial M_{\varepsilon}} (\zeta_1 + i\zeta_2)(T_{1r} - i T_{2r}) + (\zeta_1 - i \zeta_2) (T_{1r} + i T_{2r}) rdyd\theta\Bigg)  \\
&=-\frac{1}{2}  \mathrm{Re}\, \int_\Si (i\zeta_1 (c_1 \bar{b}_0 + d_1 \bar{a}_0) + \zeta_2 (d_1 \bar{a}_0 - c_1 \bar{b}_0)  \\
& \qquad  = -\frac14 \int_\Si (  i\zeta_1( c_1 \bar{b}_0 - \bar{c}_1 b_0  + d_1 \bar{a}_0 - \bar{d}_1 a_0)  + \zeta_2 (d_1 \bar{a}_0 + \bar{d}_1 a_0 - c_1 \bar{b}_0 - \bar{c}_1 b_0) \\
& \qquad = -\frac14  \int_\Si  ( \zeta_2 (A + \bar{A}) - i \zeta_1 (A - \bar{A})) = \frac12 \mathrm{Im} \int_\Si \zeta A,
\end{split}
\]
where
\[
\zeta = \zeta_1 + i \zeta_2,\ \ \mbox{and}\ \ A = A(\Phi, \psi_0) = \calT_{\psi_0} \Phi = a_0 \bar{d}_1 - \bar{b}_0 c_1.
\]

We now use all of this to relate the kernel and cokernel of $D\calF$ with those of $(\Dirac, \calD_{\psi_0})$.  

First, suppose that $(\zeta, \phi) \in \ker D\calF$, which can be written as $\calP_0(\zeta) = - \Dirac \phi$.  Note that since 
$\phi \in r H^1_e$, and we are free to assume that $\phi \perp Z_1 = \ker (\Dirac, r H^1_e)$, we have that
$\Dirac \phi$ is orthogonal to $Z_0$, hence the same is true for $\calP_0(\zeta)$. Choosing any $\Phi \in Z_0$, recall 
that $\calB_0(\Phi)$ equals a pair of functions $(a_0, b_0) \in H^{-1/2}(\Si)^2$. The calculations above show that
\[
0 = \mathrm{Re}\, \langle \calP_0(\zeta), \Phi\rangle = \frac12 \mathrm{Im}\, \int_\Si \zeta \calT_{\psi_0}(\Phi) 
= \frac12 \mathrm{Im}\, \int_\Si \zeta (a_0 \bar{d}_1 - \bar{b}_0 c_1).
\]
(This is well-defined since $\zeta \in H^2$.) Rewrite this last expression as
\[
- \frac12 \omega ( (c_1 \zeta, d_1 \bar{\zeta}), (a_0, b_0) ),
\]
where $\omega$ is the symplectic form introduced in Section 3.7.   This vanishes for every pair $(a_0, b_0)$ which arises
as $\calB_0(\Phi)$ for some $\Phi \in Z_0$. However, Corollary 3.28 shows that this set of leading coefficients of elements 
in $Z_0$, which we called the Calderon subspace there, is Lagrangian with respect to $\omega$, which implies
that $(c_1 \zeta, d_1 \bar{\zeta})$ lies in this subspace too.  In other words, there exists some $u \in Z_0$ such
that $\calB_0(u) = (c_1 \zeta, d_1 \bar{\zeta})$.  This $u$ is unique if we also demand that it is orthogonal to $Z_1$. 
Finally, we see immediately that 
\[
\calT_{\phi_0}( c_1 \zeta, d_1 \bar{\zeta}) =  c_1 \zeta \bar{d}_1 - \bar{ d_1} \bar{\zeta} c_1 = 0,
\]
so $u \in \calD_{\psi_0}$. Notice that this implies that $c_1 \zeta$ and $d_1 \bar{\zeta}$ are both smooth, hence $\zeta \in \calC^\infty$. 
In any case, we have now produced a map from $\ker D\calF$ to $\ker (\Dirac, \calD_{\psi_0})$.   This map is
injective, since $\calB_0(u) = (0,0)$ implies $\zeta = 0$, hence $\Dirac \phi = 0$ and thus $\phi = 0$ since $\phi \perp Z_1$. 

We now show that this map is surjective.  Choose any $u \in \ker (\Dirac, \calD_{\psi_0})$.  This is automatically polyhomogeneous
since it is in the nullspace of an elliptic boundary problem, hence the leading coefficients $(a_0, b_0) = \calB_0(u)$ are smooth. 
Now use the boundary condition $\calT_{\psi_0}(a_0, b_0) = 0$ as explained in Section~\ref{Sect_BdryOp} to define 
a smooth function $\zeta = a_0/c_1 = \bar{b}_0/\bar{d}_1$. This is a section of the normal bundle $N\Si$, but can be extended
to be parallel to first order in directions perpendicular to $\Si$.  Now consider $\calP_0(\zeta)$. We compute that for 
any $\Phi \in Z_0$, 
\[
\mathrm{Re}\, \langle \calP_0(\zeta), \Phi \rangle = \frac12 \mathrm{Im}\, \int_\Si \zeta \calT_{\psi_0}(\Phi).
\]
Since the leading coefficients of $u$ are $a_0 = c_1 \zeta$, $b_0 = d_1 \bar{\zeta}$, we can rewrite this further as
\[
\frac12 \omega ( ( c_1 \zeta, d_1 \bar{\zeta}), \calB_0(\Phi) ) = \frac12 \omega ( \calB_0(u), \calB_0(\Phi) ) = 0,
\]
which implies that $\calP_0(\zeta) \in Z_0^\perp$.  There exists, therefore, a unique $\phi \in r H^1_e \cap Z_1^\perp$ such that
$\Dirac \phi = -\calP_0(\zeta)$.  This pair $(\zeta, \phi)$ then lies in $\ker D\calF$, as desired. 

As for the cokernel, suppose that $\Phi \in Z_0$ is orthogonal to the image of $D\calF$. Using the calculations above, 
\[
0 = \mathrm{Re} \langle \calP_0(\zeta) + \Dirac \phi, \Phi \rangle = \frac12 \mathrm{Im} \int_\Si \zeta  \calT_{\psi_0}(\Phi)
\]
for all $\zeta$, since $\langle \Dirac \phi, \Phi \rangle = 0$, which implies that $\calT_{\psi_0}(\Phi) = 0$.  This means that 
$\Phi \in \ker(\Dirac, \calD_{\psi_0})$, and by self-adjointness is also in its cokernel.  We already know that $Z_0^\perp$ equals
the range of $\Dirac$ on $r H^1_e$, hence lies in the range of $D\calF$. Thus the cokernel of $D\calF$ is at least a subspace of the 
cokernel of $(\Dirac, \calD_{\psi_0})$.  Conversely, if $\Phi \perp \mathrm{Ran}\, (\Dirac, \calD_{\psi_0}) = \ker (\Dirac, \calD_{\psi_0})$
then $\calT_{\psi_0}(\Phi) = 0$, so $\calP_0(\zeta) \perp \Phi$ for any $\zeta$, and thus finally, $\Phi \perp \mathrm{Ran}\, D\calF$. 

In summary, we have proved that
\[
\mathrm{ind}\, (D\calF) = \mathrm{ind}\, (\Dirac_0, \calD_{\psi_0}),
\]
where we are interpreting the index as the difference of the dimension of the nullspace and the orthogonal complement of the range. 
This is a slight abuse of language since we do not claim that the range of $D\calF$ is closed. Indeed it is not, and this is the nexus 
of the loss of regularity issue we have mentioned earlier, which makes the analysis of the nonlinear deformation problem so much 
more difficult, see \cites{Takahashi15_Z2HarmSpinors_Arx, Parker-DefHS3M}.    Of course we have
dropped numerous terms throughout this calculation which vanish to some order at $\Si$.  Thus we have actually shown that
$D\calF$ can be modified by subtracting a sequence of compact error terms to arrive at an operator for which the preceding argument
shows has index equal to that of $(\Dirac, \calD_{\psi_0})$.

\bibliography{references}

\end{document}